\renewcommand{\theequation}{\thesection.\arabic{equation}}
\newtheorem{thm}{Theorem}[section]
\newtheorem{lem}[thm]{Lemma}
\newtheorem{rem}[thm]{Remark}
\newtheorem{assumption}[thm]{Assumption}
\begin{document}
\newcommand{\BX}{{\bf X}}
\newcommand{\cv}{{\cal V}}
\newcommand{\cW}{{\cal W}}
\newcommand{\co}{{\cal O}}

\renewcommand{\theequation}{\thesection.\arabic{equation}}
\def\@eqnnum{{\reset@font\rm (\theequation)}}

\def\abstract{
\advance \rightskip by 10mm
\advance \leftskip by 10mm
\vspace{-0.8em}
\noindent
\small{\bf Abstract.}
}
\def\endabstract{\par\normalsize\rm}

\def\Xint#1{\mathchoice
{\XXint\displaystyle\textstyle{#1}}%
{\XXint\textstyle\scriptstyle{#1}}%
{\XXint\scriptstyle\scriptscriptstyle{#1}}%
{\XXint\scriptscriptstyle\scriptscriptstyle{#1}}%
\!\int}
\def\XXint#1#2#3{{\setbox0=\hbox{$#1{#2#3}{\int}$}
\vcenter{\hbox{$#2#3$}}\kern-.5\wd0}}
\def\ddashint{\Xint=}
\def\dashint{\Xint-}

\def\a{\alpha}
\def\b{\beta}
\def\d{\delta}\def\D{\Delta}
\def\e{\epsilon}
\def\g{\gamma}\def\G{\Gamma}
\def\k{\kappa}
\def\lam{\lambda}\def\Lam{\Lambda}
\renewcommand\o{\omega}\renewcommand\O{\Omega}
\def\s{\sigma}\def\S{\Sigma}
\renewcommand\t{\theta}\def\vt{\vartheta}
\newcommand{\vphi}{\varphi}
\def\z{\zeta}

\newcommand{\tsigma}{\tilde{\s}}
\newcommand{\tbsigma}{\tilde{\bsigma}}
\def\te{\tilde{\e}}
\def\tu{\tilde{u}}

\newcommand{\bchi}{\mbox{\boldmath$\chi$}}
\newcommand{\bdelta}{\mbox{\boldmath$\delta$}}
\newcommand{\bepsilon}{\mbox{\boldmath$\epsilon$}}
\newcommand{\bfeta}{\mbox{\boldmath$\eta$}}
\newcommand{\bgamma}{\mbox{\boldmath$\gamma$}}
\newcommand{\bomega}{\mbox{\boldmath$\omega$}}
\newcommand{\bvphi}{\mbox{\boldmath$\varphi$}}
\newcommand{\bphi}{\mbox{\boldmath$\phi$}}
\newcommand{\bPhi}{\mbox{\boldmath$\Phi$}}
\newcommand{\bpsi}{\mbox{\boldmath$\psi$}}
\newcommand{\bPsi}{\mbox{\boldmath$\Psi$}}
\newcommand{\bsigma}{\mbox{\boldmath$\sigma$}}
\newcommand{\btau}{\mbox{\boldmath$\tau$}}
\newcommand{\bxi}{\mbox{\boldmath$\xi$}}
\newcommand{\brho}{\mbox{\boldmath$\rho$}}
\newcommand{\bbeta}{\mbox{\boldmath$\beta$}}
\newcommand{\bzeta}{\mbox{\boldmath$\zeta$}}

\def\bk{\boldsymbol{\kappa}}
\def\bmu{\boldsymbol\mu}
\def\bxi{\boldsymbol{\xi}}
\def\bz{\boldsymbol{\zeta}}

\def\ba{{\bf a}}
\def\bb{{\bf b}}
\def\bc{{\bf c}}
\def\be{{\bf e}}
\def\bff{{\bf f}}
\def\bg{{\bf g}}
\def\bn{{\bf n}}
\def\bp{{\bf p}}
\def\bq{{\bf q}}
\def\bs{{\bf s}}
\def\bt{{\bf t}}
\def\bu{{\bf u}}
\def\bv{{\bf v}}
\def\bw{{\bf w}}
\def\bx{{\bf x}}
\def\by{{\bf y}}
\def\bzz{{\bf z}}

\def\bD{{\bf D}}
\def\bE{{\bf E}}
\def\bF{{\bf F}}
\def\bH{{\bf H}}
\def\bJ{{\bf J}}
\def\bV{{\bf V}}
\def\bU{{\bf U}}
\def\bW{{\bf W}}
\def\bX{{\bf X}}
\def\bY{{\bf Y}}

\def\cA{{\cal A}}
\def\cC{{\cal C}}
\def\cD{{\cal D}}
\def\cE{{\cal E}}
\def\cF{{\cal F}}
\def\cG{{\cal G}}
\def\cI{{\cal I}}
\def\cJ{{\cal J}}
\def\cK{{\cal K}}
\def\cL{{\cal L}}
\def\cO{{\cal O}}
\def\cP{{\cal P}}
\def\cQ{{\cal Q}}
\def\cR{{\cal R}}
\def\cS{{\cal \Sigma}}
\def\cT{{\cal T}}
\def\cU{{\cal U}}
\def\cV{{\cal V}}

\def\scT{{_\cT}}
\def\sD{{_D}}
\def\sE{{_E}}
\def\sF{{_F}}
\def\sFz{{_{F_z}}}
\def\sK{{_K}}
\def\sI{{_I}}
\def\sb{{_b}}
\def\sN{{_N}}

\def\curl{{{\bf curl} \ }}
\def\rot{{\mbox{rot}\ }}
\def\BPI{{\bf \Pi}}

\def\cth{\cT_h}
\def\ctH{\cT_H}

\def\tJ{\tilde{\J}}

\def\hK{\widehat{K}}
\def\hx{\widehat{x}}
\def\hy{\widehat{y}}
\def\bhv{\widehat{\bv}}

\def\l{\ell}
\def\bl{\boldsymbol{\ell}}
\def\col{\colon}
\def\f12{\frac12}
\def\dfrac{\displaystyle\frac}
\def\dint{\displaystyle\int}
\def\nab{\nabla}
\def\p{\partial}
\def\sm{\setminus}
\def\dsum{\displaystyle\sum}
\newcommand{\pp}[2]{\frac{\partial {#1}}{\partial {#2}}}
\def\bzero{{\bf 0}}

\def\divv{\nab\cdot}
\def\divx{\nab_x\cdot}
\def\divtx{\nab_{t,x}\cdot}
\def\nabx{\nab_x}

\newcommand{\grad}{\nabla}
\newcommand{\curlt}{{\nabla \times}}
\newcommand{\gperp}{\nabla^{\perp}}
\newcommand{\gradt}{\nabla\cdot}

\def\forallqq{\quad\forall\,}
\def\aph{A^{1/2}}
\def\amh{A^{-1/2}}

\def\osc{{\rm osc \, }}

\def\Im{{\rm Im}}
\newcommand{\tr}{{\rm tr}}
\def\divvr{{\rm div}}
\def\curllr{{\rm curl}}
\def\curll{{\rm curl}}
\def\curl{{\bf curl}}
\newcommand{\bgrad}{{\bf grad}}
\newcommand\diam{\mathrm{diam\,}}
\renewcommand\Im{\mathrm{Im\,}}
\def\Span{\mbox{Span}}
\def\supp{\mbox{supp\,}}
\newcommand{\trace}{{\rm trace}}

\newcommand{\tri}{|\!|\!|}
\newcommand{\ljump}{\lbrack\!\lbrack}
\newcommand{\rjump}{\rbrack\!\rbrack}
\newcommand{\bdm}{\begin{displaymath}}
\newcommand{\edm}{\end{displaymath}}
\newcommand{\beq}{\begin{equation}}
\newcommand{\eeq}{\end{equation}}
\newcommand{\beqa}{\begin{eqnarray}}
\newcommand{\eeqa}{\end{eqnarray}}
\newcommand{\beqas}{\begin{eqnarray*}}
\newcommand{\eeqas}{\end{eqnarray*}}
\newcommand{\ul}{\underline}
\newcommand{\wh}{\widehat}
\newcommand{\la}{\langle}
\newcommand{\ra}{\rangle}

\newcommand{\Lt}{L^2(\Omega)}
\newcommand{\Lts}{L^2(\Omega)^2}
\newcommand{\Ltc}{L^2(\Omega)^3}
\newcommand{\Ho}{H^1(\Omega)}
\newcommand{\Hoh}{H^1(\wh{\Omega})}
\newcommand{\Hoi}{H^1(\Omega_i)}
\newcommand{\Hos}{H^1(\Omega)^2}
\newcommand{\Hoc}{H^1(\Omega)^3}
\newcommand{\Hoch}{H^1(\wh{\Omega})^3}
\newcommand{\Hoci}{H^1(\Omega_i)^3}
\newcommand{\Hoz}{H^1_0(\Omega)}
\newcommand{\Ht}{H^2(\Omega)}
\newcommand{\Hti}{H^2(\Omega_i)}
\newcommand{\Hts}{H^2(\Omega)^2}
\newcommand{\Htc}{H^2(\Omega)^3}
\newcommand{\Htz}{H^0(\Omega)}
\newcommand{\Hh}{H^{1/2}(\Gamma)}
\newcommand{\Hhi}{H^{1/2}(\Gamma_i)}
\newcommand{\Hmh}{H^{-1/2}(\Gamma)}
\newcommand{\Hdiv}{H(\divvr;\,\Omega)}
\newcommand{\Hdivh}{H(\divv;\,\wh \Omega)}
\newcommand{\hcurl}{H(\curl\,A;\,\Omega)}
\newcommand{\Hcurl}{H(\curll\,A;\,\Omega)}
\newcommand{\Hcrl}{H(\curll\,;\,\Omega)}
\newcommand{\hcrl}{H(\curl\,;\,\Omega)}
\newcommand{\Hcrlh}{H(\curll\,;\,\wh\Omega)}
\newcommand{\hcrlh}{H(\curl\,;\,\wh\Omega)}
\newcommand{\Wdiv}{\BW_0(\mbox{\divv}\,;\,\Omega)}
\newcommand{\Wcurl}{\BW_0(\mbox{\curl}\,A;\,\Omega)}
\newcommand{\WcrossV}{\BW \times V}

\def\grad{{\nabla}}

\def\calS{{\cal S}}
\def\calT{{\cal T}}
\def\cA{{\mathcal A}}
\def\cB{{\cal B}}
\def\cD{{\mathcal{D}}}

\def\cH{{\cal H}}
\def\ba{{\mathbf{a}}}

\def\bSigma{{\mathbf{\Sigma}}}

\def\beps{{\mathbf{\epsilon}}}
\def\bbbeta{{\mathbf{\eta}}}

\def\brho{\bf{\rho}}
\def\cM{{\mathcal{M}}}
\def\cN{{\mathcal{N}}}
\def\cT{{\mathcal{T}}}
\def\cE{{\mathcal{E}}}
\def\cP{{\mathcal{P}}}
\def\cF{{\mathcal{F}}}

\def\cB{{\mathcal{B}}}
\def\cG{{\mathcal{G}}}

\def\cL{{\mathcal{L}}}
\def\cJ{{\mathcal{J}}}
\def\cV{{\mathcal{V}}}
\def\cW{{\mathcal{W}}}

\def\tbeta{{\mathtt{\beta}}}
\def\tv{{\mathtt{v}}}
\def\tq{{\mathtt{q}}}
\def\tcurl{{\mathtt{Curl\,}}}
\newcommand{\lJump}{[\![}
\newcommand{\rJump}{]\!]}
\newcommand{\jump}[1]{[\![ #1]\!]}

\newcommand{\sd}{\bsigma^{\Delta}}
\newcommand{\rd}{\brho^{\Delta}}

\newcommand{\eps}{\epsilon}
\newcommand{\R}{{\mathbb R}}
\newcommand{\M}{{\mathbb M}}
\newcommand{\T}{{\mathbb T}}
\newcommand{\Y}{{\mathbb Y}}
\newcommand{\Z}{{\mathbb Z}}
\newcommand{\Q}{{\mathbb Q}}

\newcommand{\BNB}{{Banach-Nec\v{a}s-Babu\v{s}ka }}

\title [Nonconforming LSFEMs for General Elliptic Equations]{Least-Squares  Methods with Nonconforming Finite Elements for General Second-Order Elliptic Equations}
\author[Y. Liang and S. Zhang]{Yuxiang Liang and Shun Zhang}
\address{Department of Mathematics, City University of Hong Kong, Kowloon Tong, Hong Kong SAR, China}
\email{yuxiliang7-c@my.cityu.edu.hk, shun.zhang@cityu.edu.hk}
\thanks{This work was supported in part by
Research Grants Council of the Hong Kong SAR, China under the GRF Grant Project No. CityU 11302519,  CityU 11300517, and CityU 11305319}
\date{\today}

\keywords{}

\maketitle
\begin{abstract}
In this paper, we study least-squares finite element methods (LSFEM) for general second-order elliptic equations with nonconforming finite element approximations. The equation may be indefinite. For the two-field potential-flux div LSFEM with Crouzeix-Raviart (CR) element approximation, we present three proofs of the discrete solvability under the condition that mesh size is small enough. One of the proof is based on the coerciveness of the original bilinear form. The other two are based on the minimal assumption of the uniqueness of the solution of the second-order elliptic equation. A counterexample shows that div least-squares functional does not have norm equivalence in the sum space of $H^1$ and CR finite element spaces. Thus it cannot be used as an a posteriori error estimator. Several versions of reliable and efficient error estimators are proposed for the method. We also propose a three-filed potential-flux-intensity div-curl least-squares method with general nonconforming finite element approximations. The norm equivalence in the abstract nonconforming piecewise $H^1$-space is established for the three-filed formulation on the minimal assumption of the uniqueness of the solution of the second-order elliptic equation. The three-filed div-curl nonconforming formulation thus has no restriction on the mesh size, and the least-squares functional can be used as the built-in a posteriori error estimator. Under some restrictive conditions, we also discuss a potential-flux div-curl least-squares method.
\end{abstract}

\section{Introduction}\label{intro}
\setcounter{equation}{0}
The least-squares variational principle and the corresponding least-squares finite element methods based on a first-order system reformulation have been widely used in numerical solutions of partial differential equations; see for example \cite{CLMM:94,CMM:97,Jiang:98,BG:09,CS:04,CLW:04,CFZ:15,LZ:18,LZ:19,QZ:20}. Compared to the standard variational formulation and the related finite element methods, the first-order system least-squares finite element methods have several known advantages, such as the discrete problem is stable without the inf-sup condition of the discrete spaces and mesh size restriction, and the least-squares functional itself is a good built-in a posteriori error estimator.

Since the introduction in the classic 1973 paper of Crouzeix and Raviart \cite{CR:73}, the nonconforming finite elements, including the Crouzeix-Raviart (CR) element \cite{Brenner:15}, and various elements introduced in \cite{FS:83,Han:84,RT:92,DSSX:99,CDY:99} are very useful for numerical computation of many physical problems. In \cite{DL:03},  least-squares methods with various nonconforming finite element approximations are introduced for the diffusion problem without the lower order terms. In \cite{DL:03}, contradicting least-squares methods with conforming approximations, the coerciveness of the discrete problem needs to be proved independently. The discrete problem is coercive for the CR element for the equation without the lower-order terms. For some other elements, the assumption that the mesh size is sufficiently small is needed to ensure the coerciveness in \cite{DL:03}. The a posteriori error estimator is not discussed in \cite{DL:03}. 

\subsection{Stability results for various finite element approximations to second-order linear elliptic PDEs}
In this paper, we plan to extend the least-squares methods with nonconforming finite element approximations to general second-order elliptic equations. We first review the existence and uniqueness result of the general second-order elliptic equation and its various finite element approximations.

For the general linear second-order elliptic equation \eqref{pde1}, the solution's existence and uniqueness can be discussed in two cases. 
The first case is a simpler coercive case. Assuming that the coefficients satisfy some assumption, the standard bilinear form is coercive in the $H^1$-norm; see also Section 6.1.2 of \cite{QV:94}. The existence, uniqueness, and stability of the solution can be obtained from the Lax-Milgram Lemma. For its conforming finite element approximation, the method is also coercive.  

The second case is the general case, where the bilinear form associated with the PDE can be genuinely indefinite. In the general case, we only need to assume a very mild assumption of the domain and the coefficients; see Assumption \ref{ass_dc}. Furthermore, due to the compactness of the operator, the uniqueness, existence, and well-posedness are equivalent; see discussion in Section 2.2. We thus only assume a {\it minimal assumption: the homogeneous equation has a unique zero solution} to ensure the well-posedness of the problem. The coercive case is a special case of the general case. 

The analysis of finite element approximations to the possible indefinite general second-order elliptic equations is not simple and straightforward. In \cite{Sch:74,SW:96}, conforming finite element approximations of general elliptic equations are discussed. Recently, in \cite{CDNP:16,CNP:22}, nonconforming and mixed finite element approximations are discussed. The results require some regularity assumption and that the mesh size of the discretization is small enough. On the other hand, once the coerciveness of the least-squares method is proved, see \cite{CLMM:94,BLP:97,Cai:04,CFZ:15,Ku:07,Zhang:22}, the LSFEM with $H^1$-$H(\divvr)$ conforming finite element approximations is automatic coercive, without the restriction on the mesh size.

\subsection{Contributions of this paper}
For the CR-LSFEM for general second-order elliptic equations, we first present a two-field potential-flux div least-squares method, which is a direct generalization of the nonconforming method suggested in \cite{DL:03} to general second-order elliptic equations and a generalization of the conforming least-squares method in \cite{CLMM:94} to the CR approximation.

In the paper, we present a negative result on the norm equivalence of the nonconforming div least-squares functional in the sum space of $H^1$ and CR finite element spaces; see Lemma \ref{counter_ex}. Thus, the two-field potential-flux div CR-LSFEM does not have the two most important properties of a standard LSFEM: automatic discrete stability without the assumption of the mesh size and a built-in least-squares functional a posteriori error estimator. We need to prove the discrete solvability and develop a posteriori error estimator for the method.

To ensure the solvability of the discrete problem from the two-field CR-LSFEM, we discuss three proofs. The first proof is for the coercive case only. The second and the third proofs are for the general cases under the minimal uniqueness assumption.  Our second proof is based on Schatz's argument. We present an $L^2$-error estimate of the potential-flux div CR-LSFEM based on Cai-Ku's paper \cite{CK:06}. The proof also corrects a small error in the original \cite{CK:06} when handling mixed boundary conditions. In the third proof, we present a very short proof by combining the new proof presented in \cite{Zhang:22} and the discrete stability result proved in \cite{CDNP:16} for CR approximation of the general elliptic equation.  Different from the conforming LSFEM in \cite{CLMM:94}, in the second and third proofs, the div CR-LSFEM requires the same regularity and sufficient small mesh size assumptions as the non-least-squares (conforming, nonconforming, and mixed) methods \cite{Sch:74,SW:96,CDNP:16,CNP:22}.
In the first proof (coercive case), the restriction of the mesh size is local, and no regularity assumption is needed. The mesh size restriction in the second and third proofs is global since the regularity assumption is global.  
 
 For a posteriori error estimator of two-field potential-flux div CR-LSFEM, we suggest several a posteriori error estimators for the potential-flux div CR-LSFEM by adding different terms measuring the nonconforming error. Reliability and efficiency results are proved. 

To overcome the shortcoming of the two-field div CR-LSFEM due to the lack of norm equivalence, we suggest three-field formulations, potential-flux-density div-curl least-squares methods. A curl term of the intensity (gradient of the solution) is added to the least-squares formulation. In this new formulation, we prove norm equivalence for the abstract nonconforming piecewise $H^1$-space. Since the nonconforming space is mesh-dependent, we use Helmholtz decomposition to avoid the coerciveness constant depending on the mesh. The coerciveness of potential-flux-density div-curl least-squares methods in the abstract nonconforming space setting is then proved in the same minimal uniqueness assumption as the standard least-squares formulation without any requirement on the mesh size. Since the norm equivalence in the abstract nonconforming piecewise $H^1$-space is true, we automatically have the standard built-in least-squares a posteriori error estimator. The only other ingredient needed for the proof is a discrete Poincar\'e-Friedrichs inequality in the abstract nonconforming piecewise $H^1$-space. Thus the proof is not only true for the CR element but also true for nonconforming elements  introduced in \cite{FS:83,Han:84,RT:92,DSSX:99,CDY:99}. In a sense, we recover the good properties of the LSFEM (non-restriction on mesh size and built-in a posteriori estimator) over the conforming, nonconforming, and mixed methods with the three-filed nonconforming LSFEM.

The failure and success of the norm equivalence with and without the curl term can be connected to the theory of a posteriori error estimates of nonconforming finite elements. For the nonconforming finite element approximation, besides the standard residual and the fact the numerical flux from the potential $u_h$ is not in the $H(\divvr)$ space (the so-called conforming error), we also need to measure one extra error:  the numerical potential $u_h$ is not in the $H^1$ space or its consequence that the numerical intensity (gradient) is not in the $H(\curll)$ space (the so-called nonconforming error). This explains that we need the intensity (gradient) as an independent unknown and add its curl in the potential-flux-density div-curl least-squares method. Previous discussions of  a posteriori error estimates of the nonconforming finite element approximation can be found in \cite{DDPV:96,CBJ:02,Ain:05,CZ:10a,CHZ:17cr,CHZ:17,CHZ:20,CHZ:21}, where the different contributions of the error are discussed.

We also discuss the application and restriction of the original potential-flux div-curl least-squares method \cite{CMM:97}. When the domain is nice and the coefficient is sufficiently smooth, the original formulation introduced in \cite{CMM:97} can be used in the nonconforming case. The norm equivalence can also be established similarly. However, the two-field div-curl formulation can cause serious problems when the conditions on the domain and coefficients are not satisfied. 

\subsection{Structure of the paper}
The paper is organized as follows. In Section 2, we present preliminaries about the abstract and discrete spaces. Properties of the CR space, the Helmholtz decomposition, and the discrete Poincar\'e-Friedrichs inequality are discussed. The solution theory of a general second-order elliptic equation is also discussed. Sections 3 to 7 are about the two-field potential-flux div CR-LSFEM. In Section 3, we introduce the formulation. A discrete coerciveness is proved with the assumption of the coefficients ensuring the coerciveness of the bilinear form of the original variational problem in Section 4. We present two proofs of the discrete solvability with minimal uniqueness assumption and a regularity assumption in Sections 5 and 6.  In Section 7, we present a counterexample to show that div least-squares functional does not have norm equivalence in the sum space of $H^1$ and CR finite element spaces and we propose several versions of reliable and efficient error estimators. We propose the potential-flux-intensity div-curl least-squares method with general nonconforming finite element approximations in Section 8. The norm equivalence in the abstract nonconforming piecewise $H^1$-space is established. Under some restrictive conditions, we also discuss the potential-flux div-curl least-squares method in Section 9. Several concluding remarks are made in Section 10.

\section{Preliminaries}
\setcounter{equation}{0}
\subsection{Notations and the function spaces}
\setcounter{equation}{0}
Let $\O$ be a bounded, open, connected subset of $\mathbb{R}^d (d = 2 \mbox{ or } 3)$ with a Lipschitz continuous boundary $\p\O$. We partition the boundary of the domain $\O$ into two open subsets $\G_D$ and $\G_N$, such that $\p\O = \overline{\G_D} \cup \overline{\G_N}$ and $\G_D\cap \G_N =\emptyset$. For simplicity, we assume that $\G_D$ is not empty (i.e., $\mbox{meas}(\G_D) \neq 0$ ) and is connected.

We use the standard notations and definitions for the Sobolev spaces $H^s(\O)^d$ for $s\ge 0$. The standard associated inner product is denoted by $(\cdot , \,
\cdot)_{s,\O}$, and their respective norms are denoted by $\|\cdot \|_{s,\O}$ and
$\|\cdot\|_{s,\partial\O}$. The notation $|\cdot|_{s,\O}$ is used for semi-norms.  (We suppress the superscript $d$ because the dependence on dimension will be clear by context. We also omit the subscript $\O$ from the inner product and norm designation when there is no risk of confusion.) For $s=0$,
$H^s(\O)^d$ coincides with $L^2(\O)^d$.  The symbols $\gradt$ and $\nabla$ stand for the divergence and gradient operators, respectively. Set $H^1_D(\Omega):=\{v\in H^1(\Omega)\, :\, v=0\,\,\mbox{on }\Gamma_D\}$, $H^1_N(\Omega):=\{v\in H^1(\Omega)\, :\, v=0\,\,\mbox{on }\Gamma_N\}$, and $H^1_0(\Omega):=\{v\in H^1(\Omega)\, :\, v=0\,\,\mbox{on }\p\O\}$.

In two dimensions, for a vector-valued function $\btau= (\tau_1,\,\tau_2)^t$, define the curl operator by  $\curlt \btau := \dfrac{\p\tau_2}{\p x_1} - \dfrac{\p\tau_1}{\p x_2}$. For a scalar-valued function $v$, define the operator $\gperp$ by $\gperp v = (-\dfrac{\p v}{\p x_2},\,\dfrac{\p v}{\p x_1})$. 
In three dimensions, $\curlt \btau$ is defined standardly for a vector valued function $\btau$. To unify the notation in both dimensions, we define the vector curl as following. Let $k := 1$ if $d = 2$ and $k := 3$ if $d = 3$. The Curl of a function $\mathtt{v}\in \R^k$ is defined by
\beq
\tcurl \mathtt{v}:= \gperp \tv \mbox{ if } d=2 \quad \mbox{and}\quad
\tcurl \mathtt{v}:= \curlt \tv \mbox{ if } d=3.
\eeq
We use a special font for $\tv$  that it is a scalar function when $d=2$ and it is a vector function when $d=3$.
Given a unit normal $\bn$ we define the tangential component of a vector $\bv\in \R^d$ with respect to $\bn$ by
\beq
\gamma_{t}(\bv) := \left\{
\begin{array}{ccc}
\bv\cdot \bt & \mbox{if }d=2, & \mbox{ where }\bt = (-n_2,n_1) \mbox{ if }\bn = (n_1.n_2),\\[1mm]
\bv\times \bn & \mbox{if } d=3.&
\end{array}
\right.
\eeq
We use the standard 
$
H(\divvr;\O)$ and $ H(\curll;\O)$ spaces,
equipped with the norms
\[
\|\btau\|_{H(\divvr;\,\O)}=\left(\|\btau\|^2_{0,\O}+\|\gradt\btau\|^2_{0,\O}
 \right)^\frac12
 \quad\mbox{and}\quad
 \|\btau\|_{H(\curll;\,\O)}=\left(\|\btau\|^2_{0,\O}+\|\curlt\btau\|^2_{0,\O}
 \right)^\frac12,
\]
respectively.
Denote their subspaces by
 \[
 H_N(\divvr;\O)=\{\btau\in H(\divvr;\O)\, :\,
\btau\cdot\bn|_{\Gamma_N}=0\}
\quad \mbox{and}\quad 
 H_D(\curll;\O)=\{\btau\in H(\curll;\O)\, :\,\gamma_{t}(\btau)|_{\Gamma_D}=0\},
\]
where $\bn$ is the unit vectors outward normal to the boundary $\p\O$.

Let $\cT = \{K\}$ be a triangulation of $\O$ using simplicial elements. The mesh $\cT$ is assumed to be regular.  Denote the set of all nodes of the triangulation by
$
 \cN := \cN_{int}\cup\cN_{D}\cup\cN_{N},
$
where $\cN_{int}$ is the set of all interior nodes, and $\cN_D$ and $\cN_{N}$ are the sets of all boundary nodes belonging to the respective $\overline{\Gamma}_D$ and $\Gamma_N$.
Denote the set of all faces(3D)/edges(2D) of the triangulation by
$
 \cE := \cE_{int}\cup\cE_{D}\cup\cE_{N},
$
where $\cE_{int}$ is the set of all interior element faces/edges and $\cE_D$ and $\cE_{N}$ are the sets of all boundary faces/edges belonging to the respective $\Gamma_D$ and $\Gamma_N$. For each $F \in \cE$, denote by $h_F$ the diameter of the face/edge $F$; denote by $\bn_F$ a unit vector normal to $F$. When $F \in \cE_D \cup \cE_{N}$, assume that $\bn_F$ is the unit outward normal vector. For each interior face/edge $F\in\cE_{int}$, let $K_F^{+}$ and $K_F^{-}$ be the two elements sharing the common edge $F$ such that the unit outward normal vector of $K_F^{-}$ coincides with $\bn_F$.

Define jumps and averages over faces/edges by
$$
\jump{v}_F :=  \left\{
\begin{array}{lll}
 v|^{-}_F - v|_F^{+} \quad & F\in \cE_{int},\\[2mm]
v|_F  \quad & F\in \cE_{D}\\[2mm]
0  \quad & F\in \cE_N,
\end{array}
\right.
\quad\mbox{and}\quad
\{v(x)\}_F =  \left\{
\begin{array}{lll}
 (v^-_F+v^+_F)/2  & F\in \cE_{int}, \\[2mm]
v|_F  \quad & F\in \cE_{D}\cup\cE_N
\end{array}
\right.
$$
for all $F\in\cE$. 
A simple calculation leads to the
following identity:
$
 \jump{ u v}_F = \{v\}_F\, \jump{u}_F + \{u\}_F\, \jump{v}_F$,  for all $F\in\cE_{int}.
$
Let $P_k(K)$ be the space of polynomials of degree $k$ on element $K$. Denote the Crouzeix-Raviart nonconforming piecewise linear finite element spac \cite{CR:73,Brenner:15} associated with the triangulation $\cT$ by
\[
 V^{cr}= \{v\in L^2(\O)\,:\,v|_K\in  P_1(K)\,\,\forall\,\,K\in\cT
 \mbox{ and } \int_F\jump{v}ds = 0  \;\forall\, F\in
 \cE_{int}\}
\]
and its subspace by
$
 V^{cr}_D=\{v\in V^{cr}\,:\, \int_F v ds =0 \,\,\forall\,\, F\in
 \cE_D\}.
$
Let 
\begin{eqnarray}
W_D^{1+cr}&:=& H^1_D(\O) +  V^{cr}_D =\{v = v_1+v_2: v_1\in H^1_D(\O), v_2 \in V^{cr}_D  \},\\
W_D(\cT) &: =&\{v \in L^2(\O):  v|_K \in H^1(K), \forall K \in \cT \mbox{ and } \int_F\jump{v}ds = 0  \;\forall\, F\in \cE_{int}\cup\cE_D  \}.
\end{eqnarray}
It is easy to see that 
$
W_D^{1+cr}\subset W_D(\cT).
$
Many other classic nonconforming elements \cite{FS:83,Han:84,RT:92,DSSX:99,CDY:99} belong to $W_D(\cT)$. 
 
Denote the local lowest-order Raviart-Thomas (RT) \cite{RT:77} on element $K\in\cT$ by $RT_0(K)=P_0(K)^d +\bx\,P_0(K)$. Then the standard lowest-order $\Hdiv$ conforming RT space is defined by
 \[
 RT_{0,N}=\{\btau\in H_N(\divvr;\,\Omega) \,:\,
 \btau|_K\in RT_0(K)\,\,\,\,\forall\,\,K\in\cT\}.
 \] Also, let
$$
P_0 =\{v\in L^2(\O)\,:\, v|_K \in P_0(K) \,\, \forall\,\, K\in
\cT\}.
$$
Denote the first type of local lowest order N\'ed\'elec space \cite{Ned:80} on element $K\in\cT$ by
$$
N_0(K):= \left\{ \begin{array}{lll}
P_0(K)^2+(x_2,-x_1) P_0(K) & d=2,\\
P_0(K)^3+(0,-x_3,x_2) P_0(K)+(x_3,0,-x_1) P_0(K)+(-x_2,x_1,0) P_0(K) & d=3.
\end{array}
\right.
$$
Then the lowest order  $\Hcrl$ conforming N\'ed\'elec spaces are defined by
$$
 N_{0,D}=\{\btau\in H_D(\curll\,;\,\Omega) \,:\, \btau|_K\in  N_0(K)\,\,\,\,\forall\,\,K\in\cT\}.
$$
We define the discrete gradient operator 
as $(\nabla_h v)|_K := \grad(v|_K)$, for all $K\in \cT$.

For a fixed $r>0$, denote by $I_{rt}: \Hdiv \cap [H^r(\O)]^d \mapsto RT_0$ the standard $RT$ interpolation operator.  We have the following local approximation property: for $\btau \in H^{\ell_K}(K)$, $0<\ell_K \leq 1$,
\begin{eqnarray} \label{rti}
\|\btau - I_{rt} \btau\|_{0,K}
  &\leq& C h_K^{\ell_K} |\btau|_{\ell_K,K} \quad\forall\,\, K\in \cT, \\[2mm]
  \label{divrti}
  \|\gradt(\btau - I_{rt} \btau)\|_{0,K}
  &\leq& C h_K^{\ell_K} |\gradt\btau|_{\ell_K,K} \quad\forall\,\, K\in \cT.
\end{eqnarray}
The estimates in \eqref{rti} and \eqref{divrti} are standard for $\ell_K= 1$ and can be proved by the average Taylor series developed in \cite{DuSc:80} and the standard reference element technique with Piola transformation for $0<\ell_K<1$. The interpolations and approximation properties are entirely local.

Similarly, denote by $I_{n}: \Hcrl \mapsto N_0$ the standard lowest-order N\'ed\'elec interpolation operator.  We have the following approximation property: for $\btau \in H^1(\O)$ and $\nabla \times\btau \in  H^1(\O)^d$, 
\begin{eqnarray} \label{ndi}
\|\btau - I_{n} \btau\|_{0}  &\leq& C h |\btau|_{1}  \\[2mm]
  \label{curlndi}
  \|\nabla\times (\btau - I_{n} \btau)\|_{0}
  &\leq& C h_K |\nabla \times\btau|_{0}. 
\end{eqnarray}

Denote by $\theta_F (\bx)$ the nodal basis function of $V^{cr}_D$ associated with the face
$F\in\cE$. For $v\in L^1(F)$, define $\Pi_F^0 v = (v,1)_F/|F|$, the average value of $v$ on $F$. The local Crouzeix-Raviart interpolant is defined by $ I^{cr}_K v = \sum_{F\in \cE \cap \partial K} (\Pi_F^0 v) \theta_F(\bx)$ for  $v\in W^{1,1}(K)$.
It was shown (see \cite{EG:04,CHZ:17}) that for $v\in H^{1+\ell_K}(K)$ with $0\leq \ell_K  \leq 1$
\beq \label{localcr}
 \|v-I^{cr}_K v\|_{0,K}  \leq C \, h_{K}^{1+\ell_K}\, |\nabla v|_{\ell_K,K}\quad\forall\,\, K\in \cT. 
\eeq

\subsection{General second-order elliptic equations}
Consider the general second-order elliptic equation in divergence form
\beq \label{pde1}
-\gradt(A \nabla u) + X u=f  \mbox{ in } \O, \quad
u = 0 \mbox{ on } \Gamma_D, \quad
A \nabla u \cdot\bn = 0 \mbox{ on } \Gamma_N.
\eeq
where 
$$
Xv := \bb\cdot\nabla v+cv, \quad \forall v\in H^1(\O).
$$
  
The following very mild conditions on the domain and coefficients are assumed.
\begin{assumption}\label{ass_dc}
The domain $\O$ is a bounded, open, connected subset of $\mathbb{R}^d (d = 2 \mbox{ or } 3)$ with a Lipschitz continuous boundary $\p\O$.
The diffusion coefficient matrix $A \in L^{\infty}(\O)^{d\times d}$ is a given $d\times d$ tensor-valued function;  the matrix $A$ is uniformly symmetric positive definite: there exist positive constants $0 < \Lambda_0 \leq \Lambda_1$ such that
\beq\label{A}
\Lambda_0 \by^T\by \leq \by^T A \by \leq \Lambda_1 \by^T\by
\eeq
for all $\by\in \mathbb{R}^d$ and almost all $x\in \O$. 
The coefficients $\bb \in L^{\infty}(\O)^d$ and  $c\in L^{\infty}(\O)$ are given vector- and scalar-valued bounded functions, respectively.
\end{assumption}

The variational problem of \eqref{pde1} is:  Find $u \in H^1_D(\O)$, such that 
\beq\label{vp1}
a(u,v) = (f,v) \quad\forall v\in H^1_D(\O),
\eeq
where the bilinear form $a$ is defined as 
\beq \label{a}
a(w,v) := (A\nabla w,\nabla v) +(X w,v)\quad \mbox{for } w, v\in H^1_D(\O).
\eeq
It is easy to check that the bilinear form is continuous due to Assumption \ref{ass_dc}:
\beq
a(w,v) \leq C\|\nabla v\|_0 \|\nabla w\|_0 \quad v\in  H^1_D(\O) \mbox{ and } w\in  H^1_D(\O). 
\eeq
We also consider the adjoint problem of \eqref{pde1}, a PDE in physical form,
\beq \label{pde2}
\begin{array}{rcl}
-\gradt(A \nabla z+\bb z) + c z &=& g \mbox{ in } \O, \\[1mm]
z &=& 0 \mbox{ on } \Gamma_D, \\[1mm]
(A \nabla z +\bb z)\cdot\bn &=& 0 \mbox{ on } \Gamma_N.
\end{array}
\eeq
With the help of integrations by parts, it is easy to check that the variational problem of \eqref{pde2} is: Find $z \in H^1_D(\O)$, such that 
\beq\label{vp2}
a(v,z) = (g,v) \quad\forall v\in H^1_D(\O).
\eeq
Thus  \eqref{pde2} is the adjoint problem of \eqref{pde1}.

\begin{rem}
When $\bb=0$ and $c = -k^2$ for some $k>0$, the equation is a real Helmholtz equation. The equation is indefinite. However, as long as $k^2$ square is not an eigenvalue of $(A\nabla v,\nabla w)$, it still has a unique solution.
\end{rem}
To discuss the solution theory of linear second-order elliptic equations, we consider two slightly more general problems with righthand sides in $(H^1_D(\O))'$, the dual space of $H^1_D(\O)$.

For a $\phi\in (H^1_D(\O))'$, assume that $u\in H^1_D(\O)$ is the solution of the weak problem of the equation in divergence form:
\beq\label{vp11}
\mbox{Find } u\in H^1_D(\O), \mbox{ such that }
a(u,v) = \langle \phi, v\rangle_{(H^1_D(\O))'\times H^1_D(\O)} \quad\forall v\in H^1_D(\O),
\eeq
or the adjoint weak problem of the equation in physical form, a $\psi\in (H^1_D(\O))'$
\beq\label{vp22}
\mbox{Find } z\in H^1_D(\O), \mbox{ such that }
a(v,z) = \langle \psi, v\rangle_{(H^1_D(\O))'\times H^1_D(\O)}  \quad\forall v\in H^1_D(\O).
\eeq

For a possible indefinite linear second-order elliptic equation, the solution's existence and uniqueness theory is based on the Fredholm alternative. Since we assume the ellipticity of the PDEs (conditions \eqref{A} on  $A$), the operators associated with the divergence form and adjoint physical form problems are Fredholm operators of index zero. The uniqueness, existence, and well-posedness are equivalent for the linear second-order elliptic equation, for example, see discussions in various standard PDE books \cite{BJS:64,GT:01,Evans:10}. A detailed discussion of the following theorem with two proofs can also be found in Theorem 2.3 of \cite{Zhang:22}. Some more discussion can also be found in the introductions of \cite{CDNP:16,CNP:22}. 

\begin{thm}\label{assp}
Assume Assumption \ref{ass_dc} is true. The following assumptions are equivalent:
\begin{enumerate}[(1)]

\item The homogeneous equation $a(u,v) = 0$, for all $v\in H^1_D(\O)$, has $u=0$ as its unique solution. 

\item 
The weak problem \eqref{vp11} has the following stability bound:
\beq \label{apriori1}
	\|\nabla u\|_0 \leq C\|\phi\|_{(H^1_D(\O))'} \quad \forall \phi\in (H^1_D(\O))'.
\eeq

\item The weak problem \eqref{vp11} has a unique solution $u\in H^1_D(\O)$ for any $\phi\in (H^1_D(\O))'$. 

\item The homogeneous equation associated to \eqref{vp22}, i.e., $a(v,z) = 0$, for all $v\in H^1_D(\O)$, has $z=0$ as its unique solution.

\item 
The adjoint weak problem \eqref{vp22} has the following stability bound:
\beq \label{apriori2}
	\|\nabla z\|_0 \leq C\|\psi\|_{(H^1_D(\O))'} \quad \forall \psi\in (H^1_D(\O))'.
\eeq

\item The adjoint weak problem \eqref{vp22} has a unique solution $z\in H^1_D(\O)$ for any $\psi\in (H^1_D(\O))'$. 
\end{enumerate}
\end{thm}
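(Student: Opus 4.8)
The plan is to recast the weak problems \eqref{vp11} and \eqref{vp22} as operator equations of the form ``identity plus compact'' on the Hilbert space $H^1_D(\O)$ and then invoke the Fredholm alternative. First I would equip $H^1_D(\O)$ with the inner product $(w,v)_* := (A\nabla w,\nabla v)$; by the ellipticity bound \eqref{A}, the assumption $\mbox{meas}(\G_D)\neq 0$, and the Poincar\'e-Friedrichs inequality, $(\cdot,\cdot)_*$ is an inner product whose induced norm is equivalent to $\|\nabla\cdot\|_0$ on $H^1_D(\O)$. With respect to $(\cdot,\cdot)_*$ the principal part of $a$ is exactly the inner product itself, so Riesz representation yields a bounded operator $\mathcal{K}$ on $H^1_D(\O)$ determined by $(\mathcal{K}w,v)_* = (Xw,v)$ for all $v$, giving $a(w,v) = ((I+\mathcal{K})w,v)_*$. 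The adjoint form is treated identically: writing $\mathcal{K}^*$ for the $(\cdot,\cdot)_*$-adjoint of $\mathcal{K}$, one has $a(v,z) = (v,(I+\mathcal{K}^*)z)_*$.

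The heart of the argument is that $\mathcal{K}$ is compact. I would factor it as $\mathcal{K} = \iota^* \circ X$, where $X:H^1_D(\O)\to L^2(\O)$, $Xw = \bb\cdot\nabla w + cw$, is bounded because $\bb,c\in L^\infty(\O)$, and $\iota^*:L^2(\O)\to H^1_D(\O)$ is the $(\cdot,\cdot)_*$-adjoint of the embedding $\iota:H^1_D(\O)\hookrightarrow L^2(\O)$, characterized by $(\iota^* g,v)_* = (g,v)$. Since $\iota$ is compact by the Rellich-Kondrachov theorem, its adjoint $\iota^*$ is compact, and hence $\mathcal{K}$, a compact operator composed with a bounded one, is compact; likewise $\mathcal{K}^*$ is compact as the adjoint of a compact operator. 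This is the only genuinely substantive step, and it rests entirely on the compact embedding $H^1_D(\O)\hookrightarrow L^2(\O)$.

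With this reformulation the six assertions become statements about $I+\mathcal{K}$ and $I+\mathcal{K}^*$. Statement (1) says $I+\mathcal{K}$ is injective and (3) says it is bijective, and (1)$\Leftrightarrow$(3) is exactly the Fredholm alternative for a compact perturbation of the identity on a Hilbert space. Given bijectivity, the bounded inverse theorem supplies a bounded inverse which, combined with the norm equivalence on $H^1_D(\O)$, yields the a priori bound \eqref{apriori1}, i.e.\ (2); conversely, putting $\phi = 0$ in \eqref{apriori1} forces $\nabla u = 0$ and hence $u=0$, so (2)$\Rightarrow$(1). The identical three-way argument with $I+\mathcal{K}^*$ in place of $I+\mathcal{K}$ gives (4)$\Leftrightarrow$(5)$\Leftrightarrow$(6) and the bound \eqref{apriori2}. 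Finally, (1)$\Leftrightarrow$(4) follows from the Fredholm fact that a compact perturbation of the identity has index zero, so that $\dim\ker(I+\mathcal{K}) = \dim\ker(I+\mathcal{K}^*)$; in particular one kernel is trivial if and only if the other is.

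I expect no real obstacle beyond the compactness of $\mathcal{K}$ and the standard Fredholm/bounded-inverse bookkeeping. The one place warranting care is the precise identification of the adjoint relationships and the verification that the quantity controlled by the bounded inverse is indeed $\|\nabla\cdot\|_0$, as demanded by \eqref{apriori1} and \eqref{apriori2}; the choice of the $(\cdot,\cdot)_*$ inner product is designed to make this transparent.
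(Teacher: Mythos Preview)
Your proposal is correct and aligns with the approach the paper invokes: the paper does not spell out a proof but explicitly attributes the result to the Fredholm alternative for compact perturbations of the identity (citing standard PDE texts and \cite{Zhang:22}), which is exactly the mechanism you implement via the energy inner product $(\cdot,\cdot)_*$, the compactness of $\mathcal{K}=\iota^*X$ from Rellich--Kondrachov, and the index-zero identity $\dim\ker(I+\mathcal{K})=\dim\ker(I+\mathcal{K}^*)$.
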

In this paper, except for the case in Section 4, we only assume that one of the conditions in Theorem \ref{assp} is true.

%
%

\subsection{First-order systems of general second-order elliptic equation}
Following the notations of \cite{BG:09}, we call the solution $u$ as potential. Let the flux $\bsigma= -A\nabla u$. We have the following two-field potential-flux first-order system:
\begin{equation} \label{1os}
\left\{
\begin{array}{lllll}
\gradt \bsigma + Xu    & =& f & \mbox{in } \O
 \\[1mm]
A\nabla u+ \bsigma  & =& 0 & \mbox{in } \O,
\end{array}
\right.
\end{equation}
with boundary conditions
$
u=0 \mbox{ on }\Gamma_D
\mbox{ and } \bn\cdot\bsigma=0 \mbox{ on } \Gamma_N.
$
We have $\bsigma \in H_N(\divvr\;\O)$ and $u\in H^1_D(\O)$.

Introduce the intensity $\bphi = -\nabla u$. Then we have the following three-field potential–flux–intensity $(u,\bsigma,\bphi)$ div-curl first-order (redundant) system as seen in Chapter 5 of \cite{BG:09}:
\begin{equation} \label{gen_sys}
\left\{
\begin{array}{lllll}
\gradt \bsigma +  Xu  & =& f & \mbox{in } \O
 \\[1mm]
\curlt \bphi &=& 0 & \mbox{in } \O ,\\[1mm]
A\nabla u+ \bsigma  & =& 0 & \mbox{in } \O, \\[1mm]
A\bphi - \bsigma &=& 0 & \mbox{in } \O,\\[1mm]
\nabla u+ \bphi  & =& 0 & \mbox{in } \O,
\end{array}
\right.
\end{equation}
with boundary conditions
$
u=0 \mbox{ on }\Gamma_D$,  $\gamma_{t}(\bphi)=0 \mbox{ on }\Gamma_D$, 
and $\bn\cdot\bsigma=0 \mbox{ on } \Gamma_N.
$
We have the flux $\bsigma\in H_N(\divvr;\O)$, the intensity $\bphi \in H_D(\curllr;\O)$, and the potential $u\in H^1_D(\O)$.

\subsection{Some results for Crouzeix-Raviart elements}
Let $S_{2,D}\subset H_D^1(\O)$ be the conforming $P_2$ Lagrange finite element space associated with the mesh $\cT$. Define the following enriching operator $E_h: V^{cr}_D \rightarrow S_{2,D}$ by averaging:
\beq\label{enrich}
(E_h v)(z) = \dfrac{1}{|\cT_z|}\sum_{K\in\cT_z} v|_K (z) \quad \forall z\in \cN_{int}\cup\cN_N,
\eeq
where $\cT_z$ is the set of the elements in $\cT$ that share $z$ as a common vertex and $\cT_z$ is the number of the elements in $\cT_z$.  We have the following estimate (see (2.27), (2.37), and (2.28) of \cite{Brenner:15}):
\begin{eqnarray}\label{esE1}
\sum_{K\in\cT}h_K^{-2} \|v_{cr}-E_h v_{cr}\|_{0,K}^2 &\leq& C \sum_{F\in\cE}\dfrac{1}{h_F}\|\jump{v_{cr}}\|_{0,F}^2
\quad \forall v_{cr}\in V_D^{cr}, \\ \label{esE2}
\|\nabla_h(v_{cr}-E_h v_{cr})\|_{0}^2 &\leq& C \sum_{F\in\cE}\dfrac{1}{h_F}\|\jump{v_{cr}}\|_{0,F}^2
\quad \forall v_{cr}\in V_D^{cr},\\
\|\nabla (E_h v_{cr})\|_{0,\O} &\leq & C \|\nabla_h v_{cr}\|_{0,\O}\quad \forall v_{cr}\in V_D^{cr}.
\end{eqnarray}
Although the above estimates are proved in \cite{Brenner:15} for pure Dirichlet boundary condition only, it is not hard to see they are also true for mixed boundary conditions with a non-empty $\G_D$.

Following the argument in (2.41) of \cite{Brenner:15}, for  $v_{cr}\in V_D^{cr}$ and $v\in H_D^1(\O)$,
we also have
\beq\label{ejump}
\dfrac{1}{h_F} \|\jump{v_{cr}}\|_{0,F}^2 
= \dfrac{1}{h_F}\|\jump{v_{cr}-v}-\Pi_F^0\jump{v_{cr}-v}\|_{0,F}^2 
\leq C\|\nabla_h(v-{v_{cr}})\|_{0,K_F^+\cup K_F^-}^2.
\eeq
In other words, 
\beq  \label{esE4}
\sum_{F\in\cE} \dfrac{1}{h_F} \|\jump{v_{cr}}\|_{0,F}^2 \leq C\inf_{v\in H^1_D(\O)}\|\nabla_h(v-{v_{cr}})\|_0^2 \quad \forall v_{cr}\in V_D^{cr}.
\eeq
Specifically, let $v=0$ in \eqref{esE4}, 
\beq
\sum_{F\in\cE} \dfrac{1}{h_F} \|\jump{v_{cr}}\|_{0,F}^2 \leq C\|\nabla_h{v_{cr}}\|_0^2 \quad \forall v_{cr}\in V_D^{cr}.
\eeq
Choosing $v=u$, the solution of \eqref{pde1}, in \eqref{esE4}, we have
\beq \label{jumperror}
\sum_{F\in\cE} \dfrac{1}{h_F} \|\jump{v_{cr}}\|_{0,F}^2 \leq C\|\nabla_h(u-{v_{cr}})\|_0^2 \quad \forall v_{cr}\in V_D^{cr}.
\eeq

We also have the following equivalence between the function jump and tangential jump of the discrete gradient: For $F\in\cE$ and $v_{cr}\in V^{cr}_D$, we have
\begin{eqnarray}\label{jj1}
C h_F \|\jump{\gamma_{t_F}(\nabla v_{cr})}\|_{0,F}^2 \leq 
\dfrac{1}{h_F} \|\jump{v_{cr}}\|_{0,F}^2 \leq C h_F \|\jump{\gamma_{t_F}(\nabla v_{cr})}\|_{0,F}^2.
\end{eqnarray}
The equivalence in two dimensions is proved by a direct calculation in \cite{CHZ:17cr}. The upper bound in three dimension in proved in Lemma 3.3 of \cite{CHZ:17}. To show that $C h_F \|\jump{\nabla v_{cr}\times \bn_F}\|_{0,F}^2 \leq \dfrac{1}{h_F} \|\jump{v_{cr}}\|_{0,F}^2$ in a discrete setting, we let $ \|\jump{v_{cr}}\|_{0,F} =0$, then $v_{cr}$ is continuous on $F$, we get $\|\jump{\nabla v_{cr}\times \bn_F}\|_{0,F}=0$. By the discrete norm equivalence, we get $C h_F \|\jump{\gamma_{t_F}(\nabla v_{cr})}\|_{0,F}^2 \leq \dfrac{1}{h_F} \|\jump{v_{cr}}\|_{0,F}^2$. The weight can be obtained by the standard reference argument.

For $v_{cr}\in V_D^{cr}$ and $\btau_{rt}\in RT_{0,N}$, with the property of the CR elements, we have
\begin{eqnarray*}
(\nabla_h v_{cr}, \btau_{rt}) &=& -(v_{cr}, \gradt \btau_{rt})+ \sum_{K\in\cT}\sum_{F\in \p K}(\btau_{rt}\cdot\bn_F, v_{cr})_F\\
&=&-(v_{cr}, \gradt \btau_{rt})+\sum_{F\in \cE_{int}\cup \cE_D}(\btau_{rt}\cdot\bn_F, \jump{v_{cr}})_F
= -(v_{cr}, \gradt \btau_{rt}).
\end{eqnarray*}
Thus we have the following discrete version of integration by parts,
\beq \label{discrete_ibp}
(\nabla_h v_{cr}, \btau_{rt}) + (v_{cr}, \gradt\btau_{rt}) =0 \quad\forall\; v_{cr}\in V_D^{cr} \mbox{ and }\btau_{rt}\in RT_{0,N}
\eeq

\subsection{Helmholtz Decompositions}
For simplicity, we assume both $\Gamma_D$ and $\Gamma_N$ are not empty.

The two-dimensional version of the Helmholtz decomposition can be found in \cite{GR:86,CZ:10a}.
\begin{lem}\label{hm2D}
For a vector-valued function $\btau \in L^2(\O)^2$, there exists $\a\in H^1_D(\O)$ and $\beta\in H^1_N(\O)$ such that 
$$
\btau = A\nabla \alpha + \gperp \beta.
$$
\end{lem}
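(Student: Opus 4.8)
The plan is to construct the first potential $\a$ by an $A$-weighted elliptic projection and then recover $\beta$ as a stream function of the resulting divergence-free remainder. First I would introduce the bilinear form $a_0(w,v):=(A\nabla w,\nabla v)$ on $H^1_D(\O)$. By the uniform ellipticity \eqref{A} together with the Poincar\'e--Friedrichs inequality (valid since $\mbox{meas}(\Gamma_D)\neq 0$), the form $a_0$ is bounded and coercive on $H^1_D(\O)$, while $v\mapsto(\btau,\nabla v)$ is a bounded linear functional there because $|(\btau,\nabla v)|\leq\|\btau\|_0\|\nabla v\|_0$. The Lax--Milgram lemma then furnishes a unique $\a\in H^1_D(\O)$ with
$$(A\nabla\a,\nabla v)=(\btau,\nabla v)\qquad\forall\,v\in H^1_D(\O).$$
Note that only $A\in L^\infty(\O)^{d\times d}$ and symmetric positive definiteness are used here; no smoothness of $A$ is required.

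Next I would set $\btau_0:=\btau-A\nabla\a\in L^2(\O)^2$, so that by construction $(\btau_0,\nabla v)=0$ for all $v\in H^1_D(\O)$. Testing against $v\in C_0^\infty(\O)\subset H^1_D(\O)$ shows $\gradt\btau_0=0$ in the distributional sense, whence $\btau_0\in H(\divvr;\O)$ with vanishing divergence; integrating by parts against a general $v\in H^1_D(\O)$ then encodes the natural boundary condition $\btau_0\cdot\bn=0$ on $\Gamma_N$. Since $\btau_0$ is solenoidal on $\O$, there exists $\beta\in H^1(\O)$, unique up to an additive constant, with $\btau_0=\gperp\beta$; this is the standard scalar-potential (stream-function) theory for divergence-free $L^2$ fields, see \cite{GR:86,CZ:10a}, and it is the step that invokes the topology of $\O$.

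Finally I would fix the additive constant and verify the trace condition. A direct computation gives $\gperp\beta\cdot\bn=-\partial_{\bt}\beta$, the tangential derivative of $\beta$ along $\p\O$, so the condition $\btau_0\cdot\bn=0$ on $\Gamma_N$ forces $\partial_{\bt}\beta=0$ on $\Gamma_N$; thus $\beta$ is constant on the connected set $\Gamma_N$, and choosing the free constant to cancel that value yields $\beta\in H^1_N(\O)$. Combining the three ingredients gives $\btau=A\nabla\a+\gperp\beta$ as claimed.

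I expect the main obstacle to be the rigorous construction of $\beta$ together with the correct reading of its trace: one must pass from the weak/natural statement $\btau_0\cdot\bn=0$ on $\Gamma_N$ to the pointwise statement $\beta=0$ on $\Gamma_N$, which relies on $\O$ being simply connected (so that a solenoidal field is \emph{globally} a perp-gradient, not merely locally) and on $\Gamma_N$ being connected (so that a single normalizing constant suffices). The regularity bookkeeping, namely that $\beta$ genuinely lies in $H^1(\O)$ rather than only possessing an $L^2$ perp-gradient, is supplied by the cited Helmholtz decomposition results.
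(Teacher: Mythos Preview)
The paper does not give its own proof of this lemma; it merely records the statement and defers to \cite{GR:86,CZ:10a}. Your argument is exactly the standard construction carried in those references: obtain $\alpha$ by an $A$-weighted Lax--Milgram projection onto $H^1_D(\O)$, observe that the remainder $\btau_0=\btau-A\nabla\alpha$ is weakly divergence-free with vanishing normal trace on $\Gamma_N$, invoke the 2D stream-function theorem to write $\btau_0=\gperp\beta$ with $\beta\in H^1(\O)$, and then normalize $\beta$ on $\Gamma_N$ using $\gperp\beta\cdot\bn=-\partial_{\bt}\beta$. The reasoning is correct, and you rightly isolate the two topological inputs that the paper leaves implicit: simple connectedness of $\O$ (so that a solenoidal $L^2$ field admits a \emph{global} stream function in $H^1(\O)$) and connectedness of $\Gamma_N$ (so that a single additive constant suffices to enforce $\beta|_{\Gamma_N}=0$). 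Under the paper's standing hypotheses, once $\O\subset\R^2$ is simply connected its boundary is a single closed curve, and then the assumed connectedness of $\Gamma_D$ forces $\Gamma_N$ to be a single arc as well, so the second requirement is not an extra restriction.
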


We have the following Helmholtz decomposition in three dimensions (Theorem 2.6 \cite{CMM:97} and Theorem 2.1 of \cite{CBJ:02}).
\begin{lem}\label{hm3D}
Assume that $\O$ is a simply connected,  bounded, and open domain in $\R^3$. The boundary $\p\O$ is Lipschitz continuous. Then, for a vector-valued function $\btau \in L^2(\O)^3$, there exists a unique $\a\in H^1_D(\O)$ and a unique $\beta\in H^1(\O)^3$ with 
$\gradt \bbeta = 0$, $\bbeta\cdot\bn=0$ on $\Gamma_D$, $(\nabla \times \bbeta)\cdot\bn=0$ on $\Gamma_N$ and $\bbeta\times\bn=0$ on $\Gamma_N$, such that 
$$
\btau = A\nabla \alpha + \nabla \times \bbeta 
\quad
\mbox{and}\quad 
\|\bbeta\|_1 \leq C \|\nabla \times \bbeta\|_0.
$$
\end{lem}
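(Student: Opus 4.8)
The plan is to build $\alpha$ and $\bbeta$ separately: the gradient part comes from a coercive scalar problem, and the solenoidal part from a three-dimensional vector potential theorem. First I would fix $\alpha\in H^1_D(\O)$ as the unique solution of the weighted elliptic problem
\[
(A\nabla\alpha,\nabla v) = (\btau,\nabla v)\qquad\forall\,v\in H^1_D(\O).
\]
Since $A$ is uniformly symmetric positive definite (Assumption \ref{ass_dc}) and $\Gamma_D$ is non-empty, the Poincar\'e--Friedrichs inequality makes the bilinear form $(A\nabla\cdot,\nabla\cdot)$ coercive and bounded on $H^1_D(\O)$, so the Lax--Milgram lemma yields existence, uniqueness, and the bound $\|\nabla\alpha\|_0\le C\|\btau\|_0$. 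Setting $\mathbf{w}:=\btau - A\nabla\alpha$, the variational identity says exactly that $(\mathbf{w},\nabla v)=0$ for all $v\in H^1_D(\O)$; testing with $v\in C_0^\infty(\O)$ gives $\gradt\mathbf{w}=0$ in $\O$, and then integration by parts against a general $v\in H^1_D(\O)$ forces $\mathbf{w}\cdot\bn=0$ on $\Gamma_N$ in the weak sense.

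It remains to write the divergence-free field $\mathbf{w}$ as a curl. Here I would invoke the vector potential theorem for simply connected Lipschitz domains (the results of \cite{CMM:97,CBJ:02}, in the spirit of Girault--Raviart and Amrouche--Bernardi--Dauge--Girault): since $\mathbf{w}\in L^2(\O)^3$ satisfies $\gradt\mathbf{w}=0$ and $\mathbf{w}\cdot\bn=0$ on $\Gamma_N$, there exists $\bbeta\in H^1(\O)^3$ with $\nabla\times\bbeta=\mathbf{w}$, together with the gauge and boundary conditions $\gradt\bbeta=0$, $\bbeta\cdot\bn=0$ on $\Gamma_D$, and $\bbeta\times\bn=0$ on $\Gamma_N$, and with the stability estimate $\|\bbeta\|_1\le C\|\nabla\times\bbeta\|_0$. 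The remaining condition $(\nabla\times\bbeta)\cdot\bn=0$ on $\Gamma_N$ is then automatic, being precisely $\mathbf{w}\cdot\bn=0$ on $\Gamma_N$. This produces the claimed decomposition $\btau=A\nabla\alpha+\nabla\times\bbeta$. Uniqueness of $\alpha$ is immediate from Lax--Milgram, while uniqueness of $\bbeta$ follows because the gauge conditions $\gradt\bbeta=0$ with the prescribed traces leave a trivial kernel once the domain is simply connected and $\Gamma_D$ is connected.

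The main obstacle is the vector potential step, not the scalar problem. The delicate point is that on a general (non-convex) Lipschitz domain a field in $H(\divvr;\O)\cap H(\curll;\O)$ carrying these mixed normal/tangential traces need not lie in $H^1$, so the estimate cannot be read off from a naive $H(\divvr;\O)\cap H(\curll;\O)\hookrightarrow H^1$ embedding. What rescues the argument is the gauge freedom: one does not claim that every such field is $H^1$, only that among all vector potentials of $\mathbf{w}$ one may select an $H^1$ representative, and that simple connectivity with connected $\Gamma_D$ eliminates the harmonic/cohomology fields that would otherwise obstruct both uniqueness and the Friedrichs-type bound $\|\bbeta\|_1\le C(\|\nabla\times\bbeta\|_0+\|\gradt\bbeta\|_0)$; with $\gradt\bbeta=0$ this collapses to the stated inequality. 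I would therefore devote the bulk of the proof to either citing \cite{CMM:97,CBJ:02} for this vector potential theorem or reproducing its construction via a regularized curl--curl problem.
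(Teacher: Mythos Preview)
The paper does not prove this lemma at all: it is stated as a known result with the parenthetical citation ``(Theorem 2.6 \cite{CMM:97} and Theorem 2.1 of \cite{CBJ:02})'' and no further argument. Your proposal therefore cannot be compared against a proof in the paper, but what you have written is the standard construction behind those cited results---solve the coercive $A$-weighted Dirichlet--Neumann problem for $\alpha$, then invoke the $H^1$ vector-potential theorem on the divergence-free remainder---and it is correct in outline; your remarks on why the $H^1$ regularity of $\bbeta$ comes from gauge freedom rather than from an $H(\divvr)\cap H(\curll)\hookrightarrow H^1$ embedding are apt, and citing \cite{CMM:97,CBJ:02} for that step is exactly what the paper itself does.
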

\begin{rem}
A discussion of the more complicated multiply-connected domain can also be found in \cite{CMM:97}; we omit it in this paper for simplicity.
\end{rem}

Let
\beq
\Q = \left\{\begin{array}{lll}
H^1_N(\O)
& d=2,\\[2mm]
 \{\bbeta \in H^1(\O)^3: \gradt\bbeta =0,  \bbeta\cdot\bn=0 \mbox{ on }\Gamma_D, 
 (\nabla \times \bbeta)\cdot\bn=0 \mbox{ and } \bbeta\times\bn=0 \mbox{ on }\Gamma_N\} & d=3.
\end{array}
\right.
\eeq
We have the following Poincar\'{e}-Friedrichs inequality,
\beq\label{PF}
\|\tq\|_0 \leq C \|\tcurl \tq\|_0 \quad \forall \tq\in \Q.
\eeq
Thus $ \|\tcurl \tq\|_0$ is a norm on $\Q$.

The following integrations by parts formulas also hold:
\begin{eqnarray}
\label{ip1}
(\nabla p, \tcurl \tq) &=& 0 \quad \forall (p, \tq)\in H_D^1(\O)\times \Q,\\
\label{ip2}
(\btau, \tcurl \tq) - (\nabla \times \btau, \tq) &=& 0 \quad \forall (\btau, \tq)\in H_D(\curll;\O)\times \Q.
\end{eqnarray}
The identities in  \eqref{ip1} and the two-dimensional case in  \eqref{ip2} are easy to prove. 
We only give proof of the second identity in three dimensions. First, we have the following integration by parts formula for $H(\curll)$ functions (see Theorem 3.31 of \cite{Monk:06}), 
$$
(\nabla \times \btau, \tq) - (\btau, \nabla \times \tq)  = (\gamma_t(\btau), \gamma_T(\tq))_{\p\O}, \quad \forall (\btau,\tq) \in H(\curll;\O)^2,
$$
where $ \gamma_T(\tq) = (\tq\times \bn)\times\bn$.
Thus, for $(\btau, \tq)\in H_D(\curll;\O)\times \Q$, we have
\begin{eqnarray*}
(\nabla \times \btau, \tq) - (\btau, \nabla \times \tq)  &=&
(\gamma_t(\btau), \gamma_T(\tq))_{\p\O} =  (\btau\times \bn, (\tq\times \bn)\times\bn))_{\p\O} \\
& =&  (\btau\times \bn, (\tq\times \bn)\times\bn)_{\G_D}+ (\btau\times \bn, (\tq\times \bn)\times\bn))_{\G_N} =0.
 \end{eqnarray*}
In summary, we have the following Helmholtz decomposition theorem
\begin{thm}\label{hd}
Assuming the conditions of Lemmas \ref{hm2D} and \ref{hm3D} hold, we have: For
$\btau \in L^2(\O)^d$, there exists $\a\in H^1_D(\O)$ and $\beta\in \Q$ such that 
\beq
\btau = A\nabla \alpha + \tcurl \tbeta \mbox{ and }
\|A^{-1/2}\btau\|_0^2 = \|A^{1/2}\nabla \alpha\|_0^2 + \|A^{-1/2} \tcurl  \tbeta\|_0^2.
\eeq
\end{thm}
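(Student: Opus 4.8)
The plan is to read off the decomposition directly from the two preceding Helmholtz lemmas and then to verify the weighted norm identity by a short algebraic computation that relies on the orthogonality relation \eqref{ip1}. For existence I would split by dimension. When $d=2$, Lemma \ref{hm2D} furnishes $\alpha\in H^1_D(\O)$ and $\beta\in H^1_N(\O)$ with $\btau = A\nabla\alpha + \gperp\beta$; since $\tcurl\tbeta = \gperp\tbeta$ and $\Q = H^1_N(\O)$ in this case, setting $\tbeta=\beta$ puts this in the claimed form with $\tbeta\in\Q$. When $d=3$, Lemma \ref{hm3D} gives $\alpha\in H^1_D(\O)$ and $\tbeta\in\Q$ with $\btau = A\nabla\alpha + \curlt\tbeta = A\nabla\alpha + \tcurl\tbeta$. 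Thus in both dimensions we obtain $\btau = A\nabla\alpha + \tcurl\tbeta$ with $\alpha\in H^1_D(\O)$ and $\tbeta\in\Q$, and the existence half of the statement needs no further argument.

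For the norm identity I would start from $\|A^{-1/2}\btau\|_0^2 = (A^{-1}\btau,\btau)$, valid because $A$ is symmetric positive definite, and substitute the decomposition, using $A^{-1}\btau = \nabla\alpha + A^{-1}\tcurl\tbeta$. Expanding the inner product yields four terms. The two diagonal terms are $(\nabla\alpha, A\nabla\alpha) = \|A^{1/2}\nabla\alpha\|_0^2$ and $(A^{-1}\tcurl\tbeta, \tcurl\tbeta) = \|A^{-1/2}\tcurl\tbeta\|_0^2$, which already reproduce the right-hand side of the asserted identity.

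The crux is then that the two cross terms vanish. Using the symmetry of $A$, and hence of $A^{-1}$, the mixed term $(A^{-1}\tcurl\tbeta, A\nabla\alpha)$ simplifies pointwise to $(\tcurl\tbeta)^{T}A^{-1}A\nabla\alpha = \tcurl\tbeta\cdot\nabla\alpha$, so that both cross terms equal $(\nabla\alpha, \tcurl\tbeta)$. Applying the integration-by-parts identity \eqref{ip1} with $p=\alpha\in H^1_D(\O)$ and $\tq=\tbeta\in\Q$ gives $(\nabla\alpha, \tcurl\tbeta)=0$, so the cross terms are zero and the Pythagorean identity follows.

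I do not expect a substantial analytic obstacle here, since Lemmas \ref{hm2D}--\ref{hm3D} supply the decomposition and \eqref{ip1} supplies the orthogonality. The only point demanding care is the cross-term manipulation: one must invoke the symmetry of $A$ to cancel the weight against its inverse and reduce the $A$-weighted mixed product to the unweighted pairing $(\nabla\alpha,\tcurl\tbeta)$, and then confirm that the hypotheses of \eqref{ip1} are met in both dimensions (recalling that $\tcurl=\gperp$ when $d=2$). Everything else is routine.
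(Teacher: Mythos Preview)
Your argument is correct and matches the paper's own approach: the existence follows directly from Lemmas~\ref{hm2D} and~\ref{hm3D}, and the Pythagorean identity is, as the paper states, a simple consequence of the orthogonality relation~\eqref{ip1}, which you have spelled out in full detail.
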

The identity $\|A^{-1/2}\btau\|_0^2 = \|A^{1/2}\nabla \alpha\|_0^2 + \|A^{-1/2} \tcurl  \tbeta\|_0^2$ is a simple consequence of the orthogonality relation \eqref{ip1}.

\subsection{Discrete Poincar\'{e}-Friedrichs Inequality}
The proof of the following discrete Poincar\'{e}-Friedrichs inequality can be found in papers \cite{Bre:03,Voh:05}.
\begin{thm}\label{disPoin}
There exists a positive constant $C$ depending on $\O$ such that 
\beq \label{dPF}
\|v\|_0 \leq C \|\nabla_h v\|_0 \quad v\in W^{1}_D(\cT).
\eeq
\end{thm}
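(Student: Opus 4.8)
The plan is to reduce the claim to a broken Poincar\'e--Friedrichs inequality in which the interelement jumps contribute only through their face-averages. For a \emph{fixed} mesh $\cT$ the quantity $\|\nabla_h v\|_0$ is already a norm on $W^1_D(\cT)$: if $\|\nabla_h v\|_0 = 0$ then $v$ is piecewise constant, the condition $\int_F\jump{v}\,ds = 0$ on every $F\in\cE_{int}$ forces $v$ to be globally constant (using that $\O$ is connected), and the condition $\int_F v\,ds = 0$ on every $F\in\cE_D$ then forces that constant to vanish. The real content of \eqref{dPF} is therefore the \emph{mesh-independence} of $C$, and this is what the proof must deliver. Note that, since $W^1_D(\cT)$ contains genuine piecewise $H^1$ functions rather than only finite element functions, a node-averaging enriching operator of the type in \eqref{enrich} is not available here, so I would avoid a conforming-companion argument and work directly with element means.

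First I would split the $L^2$-mass element by element into a mean part and an oscillation part,
\[
\|v\|_0^2 = \sum_{K\in\cT}\Big(\|v-\bar v_K\|_{0,K}^2 + |K|\,\bar v_K^2\Big),
\qquad \bar v_K := \frac{1}{|K|}\int_K v\,dx .
\]
The oscillation terms are controlled locally: the standard Poincar\'e inequality on the shape-regular simplex $K$ gives $\|v-\bar v_K\|_{0,K}\le C h_K\,|v|_{1,K}$, and summing over $K\in\cT$ yields $\sum_{K}\|v-\bar v_K\|_{0,K}^2 \le C h^2\|\nabla_h v\|_0^2 \le C\|\nabla_h v\|_0^2$. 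It remains to bound the piecewise-constant part $\sum_K |K|\,\bar v_K^2$ by $\|\nabla_h v\|_0^2$, and this is where the jump hypotheses enter.

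For each $K$ I would fix a chain of elements $K=K_0,K_1,\dots,K_m$ with $K_i$ and $K_{i+1}$ sharing a face $F_i\in\cE_{int}$ and with $K_m$ abutting $\Gamma_D$, and telescope:
\[
\bar v_{K} = \sum_{i=0}^{m-1}\big(\bar v_{K_i}-\bar v_{K_{i+1}}\big) + \bar v_{K_m}.
\]
Across each interior face the difference of the two element means equals the face-average of the jump plus two trace corrections; because $\int_{F_i}\jump{v}\,ds = 0$ the leading term vanishes and only the trace-controlled oscillations of $v$ on $K_i$ and $K_{i+1}$ survive, each bounded by a suitably $h$-weighted multiple of $|v|_{1,K_i}$, $|v|_{1,K_{i+1}}$ via a scaled trace inequality. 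The terminal mean $\bar v_{K_m}$ is treated the same way against the Dirichlet condition $\int_F v\,ds = 0$. Summing the telescoped estimates and re-summing over $K$ gives $\sum_K|K|\,\bar v_K^2\le C\|\nabla_h v\|_0^2$, which combined with the oscillation bound completes the proof.

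The main obstacle is precisely the last step: making $C$ independent of $\cT$. The telescoping introduces the chain lengths $m$ and a web of overlapping face contributions, so one must choose the chains so that their lengths stay uniformly bounded and each element is reused only a bounded number of times --- this is exactly where shape-regularity of $\cT$ and the connectedness of $\O$ through the nonempty, connected $\Gamma_D$ are essential --- and one must track the $h_K$- and $h_F$-powers in the scaled trace inequalities so that they cancel rather than accumulate along a path. The careful combinatorial path construction that yields a uniform constant is carried out in \cite{Bre:03,Voh:05}, whose argument I would follow.
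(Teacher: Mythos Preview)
The paper supplies no proof of this theorem; it simply cites \cite{Bre:03,Voh:05}. You end by deferring to the same two references, so at that level your proposal matches the paper exactly.

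Your sketch, however, misidentifies the mechanism that produces a mesh-independent constant. You write that one must ``choose the chains so that their lengths stay uniformly bounded and each element is reused only a bounded number of times.'' Neither can be arranged: on a shape-regular mesh of size $h$, any face-connected path from a deep interior element to $\Gamma_D$ has length comparable to $1/h$, and under any reasonable path assignment the elements adjacent to $\Gamma_D$ are traversed by roughly $1/h$ paths. What actually makes the telescoping argument close is the interplay of $h$-weights: each increment satisfies $|\bar v_{K_i}-\bar v_{K_{i+1}}|\le C\,h_{K_i}^{(2-d)/2}\,|v|_{1,K_i\cup K_{i+1}}$ from the scaled trace/Poincar\'e inequality, and after Cauchy--Schwarz along the path and multiplication by $|K|\sim h_K^d$, the resulting power of $h$ exactly offsets both the $O(1/h)$ path length and the $O(1/h)$ reuse count when the order of summation is exchanged. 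Shape-regularity enters only to keep the \emph{local} trace and Poincar\'e constants uniform, not to bound path lengths. This bookkeeping (or, alternatively, an argument that bypasses paths entirely) is precisely what \cite{Bre:03,Voh:05} supply, so your deferral to them is correct even though your summary of what they do is not.
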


With the help of the discrete Poincar\'{e}-Friedrichs inequality, we can  define the following combined norm for the space $H_N(\divvr;\O)\times W^{1}_D(\cT)$:
$$
\tri (\btau, v) \tri ^2 := \|\nabla_h v\|^2_0 + \|\btau\|_0^2+ \|\gradt \btau\|_0^2 
\quad (\btau, v) \in H_N(\divvr;\O)\times W^{1}_D(\cT).
$$

%

\section{Two-Field Potential-Flux Div CR-LSFEM}
\setcounter{equation}{0}

This section introduces the two-field potential-flux div least-squares method for general second-order elliptic equations with Crouzeix-Raviart elements.

For $(\btau,v) \in H_N(\divvr;\O)\times W_D^{1+cr}$, define the two-field potential-flux div least-squares functional for \eqref{pde1} as:
\beq
\cJ^{div}_{h}(\btau,v;f) := \|A^{1/2}\nabla_h v+ A^{-1/2}\btau\|_0^2 + \|\gradt \btau + X_h v -f\|_0^2,
\eeq
where 
$X_h v= \bb\cdot\nabla_h v + cv$.

For $(\bchi,w) \in H_N(\divvr;\O)\times W_D^{1+cr}$ and $(\btau,v) \in  H_N(\divvr;\O)\times W_D^{1+cr}$, define the following  bilinear form $b_h$:
\begin{eqnarray}
b_h((\bchi,w), (\btau,v))&:=& (A\nabla_h w+ \bchi, \nabla_h v+ A^{-1}\btau)+(\gradt\bchi+X_h w, \gradt\btau+X_h v).
\end{eqnarray}
When $v\in H_D^1(\O)$, we can remove the subscript $h$ in $\nabla_h$ of the above definitions, and they are the standard potential-flux div least-squares functional and bilinear forms.
We have,
$$
\cJ^{div}_{h}(\btau,v;0) = b_h((\btau,v), (\btau,v)) \quad (\btau,v) \in  H_N(\divvr;\O)\times W_D^{1+cr}.
$$
The solution $(\bsigma, u)$ satisfies the least-squares minimization problem:
Find $(\bsigma,u)\in H_N(\divvr;\O)\times H^1_D(\O)$ such that
\beq \label{lsfem-divl2_min}
\cJ^{div}_{h}(\bsigma,u;f)
= \inf_{(\btau,v)\in H_N(\divvr;\O)\times H^1_D(\O)}\cJ^{div}_{h}(\btau,v; f).
\eeq
Or equivalently, a weak problem: Find $(\bsigma,u)\in H_N(\divvr;\O)\times H^1_D(\O)$ such that
\beq \label{lsfem-divl2}
b_{h}((\bsigma,u), (\btau,v)) = (f, \gradt\btau+X_h v) \quad \forall (\btau,v)\in H_N(\divvr;\O)\times H^1_D(\O).
\eeq
The following norm equivalence is standard.  Different proofs can be found in \cite{CLMM:94,Cai:04,CFZ:15,Ku:07,Zhang:22}.
\begin{thm} Assuming one of the conditions in Theorem \ref{assp} is true, then for all $(\btau,v)\in H_N(\divvr;\O)\times H_D^1(\O)$, we have
\beq \label{norm-equ-standard}
C_1 \tri (\btau,v) \tri^2 \leq \cJ^{div}_{h}(\btau,v;0) =\|A^{1/2}\nabla v+ A^{-1/2}\btau\|_0^2 + \|\gradt \btau + X v\|_0^2
\leq C_2 \tri (\btau,v) \tri^2.
\eeq
\end{thm}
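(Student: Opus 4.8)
The plan is to prove the two inequalities separately; the upper bound is elementary continuity, while the lower bound (coercivity) is the substantive half and is exactly where the uniqueness hypothesis enters. Throughout, since $v\in H_D^1(\O)$ we have $\nabla_h v=\nabla v$ and $X_h v=Xv$, so the functional reads $\cJ^{div}_h(\btau,v;0)=\|\mathbf{r}_1\|_0^2+\|r_2\|_0^2$ with the flux residual $\mathbf{r}_1:=A^{1/2}\nabla v+A^{-1/2}\btau$ and the equation residual $r_2:=\gradt\btau+Xv$. For the upper bound I would simply expand each square by the triangle inequality, use the spectral bounds \eqref{A} to estimate $\|A^{\pm1/2}(\cdot)\|_0$, and bound $\|Xv\|_0\le \|\bb\|_{\infty}\|\nabla v\|_0+\|c\|_{\infty}\|v\|_0\le C\|\nabla v\|_0$ via the Poincar\'e--Friedrichs inequality on $H^1_D(\O)$ (available since $\G_D\ne\emptyset$). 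This yields $\cJ^{div}_h(\btau,v;0)\le C_2\tri(\btau,v)\tri^2$ and uses no structural assumption on the PDE.

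For the lower bound, the idea is to recover each of the three pieces of $\tri(\btau,v)\tri^2$ from $\|\mathbf{r}_1\|_0$ and $\|r_2\|_0$, with $\|\nabla v\|_0$ being the only non-algebraic one. The two defining identities $A\nabla v=A^{1/2}\mathbf{r}_1-\btau$ and $Xv=r_2-\gradt\btau$ let me rewrite, for any $w\in H^1_D(\O)$,
\[
a(v,w)=(A\nabla v,\nabla w)+(Xv,w)=(A^{1/2}\mathbf{r}_1,\nabla w)-(\btau,\nabla w)+(r_2,w)-(\gradt\btau,w).
\]
The key is that $\btau\in H_N(\divvr;\O)$ and $w\in H^1_D(\O)$ make the boundary term in integration by parts vanish ($w=0$ on $\G_D$, $\btau\cdot\bn=0$ on $\G_N$), so $(\btau,\nabla w)=-(\gradt\btau,w)$ and the two $\gradt\btau$ contributions cancel, leaving the clean identity $a(v,w)=(A^{1/2}\mathbf{r}_1,\nabla w)+(r_2,w)$. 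Hence $|a(v,w)|\le C(\|\mathbf{r}_1\|_0+\|r_2\|_0)\|\nabla w\|_0$ after one more use of \eqref{A} and Poincar\'e.

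Now I invoke the hypothesis: given $v$, define $\phi\in(H^1_D(\O))'$ by $\langle\phi,w\rangle=a(v,w)$; then $v$ is the (unique) solution of \eqref{vp11} for this datum, so the a priori bound \eqref{apriori1} from Theorem \ref{assp} gives $\|\nabla v\|_0\le C\|\phi\|_{(H^1_D(\O))'}=C\sup_{w\ne 0}a(v,w)/\|\nabla w\|_0\le C(\|\mathbf{r}_1\|_0+\|r_2\|_0)$. With $\|\nabla v\|_0$ controlled, the remaining two pieces follow algebraically: $\|\btau\|_0\le \|A^{1/2}\mathbf{r}_1\|_0+\|A\nabla v\|_0\le C(\|\mathbf{r}_1\|_0+\|\nabla v\|_0)$ from $\btau=A^{1/2}\mathbf{r}_1-A\nabla v$, and $\|\gradt\btau\|_0\le\|r_2\|_0+\|Xv\|_0\le C(\|r_2\|_0+\|\nabla v\|_0)$. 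Summing gives $C_1\tri(\btau,v)\tri^2\le\cJ^{div}_h(\btau,v;0)$. The main obstacle is precisely the step where $\|\nabla v\|_0$ is bounded: everything else is bookkeeping with \eqref{A} and Poincar\'e, but this step is where the indefinite PDE theory is genuinely needed, entering only through the stability estimate \eqref{apriori1} that the uniqueness assumption guarantees via Theorem \ref{assp}.
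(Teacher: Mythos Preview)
Your proof is correct. The paper does not give its own proof of this theorem, citing instead \cite{CLMM:94,Cai:04,CFZ:15,Ku:07,Zhang:22}; however, your argument---rewriting $a(v,w)$ in terms of the two residuals via integration by parts (so that the $\btau$ and $\gradt\btau$ contributions cancel) and then invoking the stability bound \eqref{apriori1}---is exactly the continuous analog of the proof the paper carries out in Section~6 for the discrete CR case (attributed there to \cite{Zhang:22}), so you have recovered one of the cited approaches.
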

Consider the two-field potential-flux div CR-LSFEM:
Find $(\bsigma_{rt},u_{cr})\in RT_{0,N}\times V^{cr}_D$ such that
\beq
\cJ^{div}_{h}(\bsigma_{rt},u_{cr};f)
= \inf_{(\btau_{rt},v_{cr})\in RT_{0,N}\times V^{cr}_D}\cJ^{div}_{h}(\btau_{rt},v_{cr}; f).
\eeq
Or equivalently: Find $(\bsigma_{rt},u_{cr})\in RT_{0,N}\times V^{cr}_D$ such that
\beq \label{cr-lsfem-divl2}
b_{h}((\bsigma_{rt},u_{cr} ), (\btau_{rt},v_{cr})) = F_h(\btau_{rt},v_{cr}) \quad \forall (\btau_{rt},v_{cr})\in RT_{0,N}\times V^{cr}_D.
\eeq
Unlike the standard CR finite element method \cite{CR:73,Brenner:15}, as discussed in \cite{DL:03}, we have the error equation for the least-squares method.
\begin{lem} Assume that $(\bsigma,u)\in H_N(\divvr;\O)\times H^1_D(\O)$ is the solution of  \eqref{lsfem-divl2} and $(\bsigma_{rt},u_{cr})\in RT_{0,N}\times V^{cr}_D$  is the numerical solution of \eqref{cr-lsfem-divl2}.
The following error equation is true,
\beq\label{error_eq}
b_h((\bsigma-\bsigma_{rt},u-u_{cr}), (\btau_{rt},v_{cr})) = 0 \quad \forall (\btau_{rt},v_{cr})\in RT_{0,N}\times V^{cr}_D.
\eeq
\end{lem}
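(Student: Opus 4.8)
The plan is to derive the error equation by comparing the continuous weak problem \eqref{lsfem-divl2} with the discrete weak problem \eqref{cr-lsfem-divl2}, both of which share the bilinear form $b_h$ and the same right-hand side $F_h(\btau_{rt},v_{cr}) = (f,\gradt\btau_{rt}+X_h v_{cr})$, read off from the Euler--Lagrange equation of the least-squares functional. Since $RT_{0,N}\subset H_N(\divvr;\O)$ and $V^{cr}_D\subset W_D^{1+cr}$, any discrete test pair $(\btau_{rt},v_{cr})$ is an admissible argument of $b_h$, so the discrete identity $b_h((\bsigma_{rt},u_{cr}),(\btau_{rt},v_{cr})) = F_h(\btau_{rt},v_{cr})$ is available directly. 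The heart of the proof is therefore to establish the companion identity $b_h((\bsigma,u),(\btau_{rt},v_{cr})) = F_h(\btau_{rt},v_{cr})$ for the \emph{same} nonconforming test functions; subtracting the two then yields \eqref{error_eq} by bilinearity.

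The obstacle is that the continuous weak problem \eqref{lsfem-divl2} is posed only over the conforming test space $H_N(\divvr;\O)\times H^1_D(\O)$, and $v_{cr}\in V^{cr}_D$ does not belong to $H^1_D(\O)$; one therefore cannot simply insert $(\btau_{rt},v_{cr})$ into \eqref{lsfem-divl2}. To circumvent this I would not use the weak form at all for $(\bsigma,u)$, but instead exploit that $(\bsigma,u)$ is the exact solution of the first-order system \eqref{1os}, i.e.\ $A\nabla u+\bsigma=0$ and $\gradt\bsigma+Xu=f$ hold pointwise in $L^2(\O)$. This is legitimate because the true solution lies in the conforming trial space and drives $\cJ^{div}_h$ to zero, so by the norm equivalence \eqref{norm-equ-standard} it is the unique minimizer. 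Since $u\in H^1_D(\O)$ gives $\nabla_h u=\nabla u$ and $X_h u=Xu$, expanding
\[
b_h((\bsigma,u),(\btau_{rt},v_{cr})) = (A\nabla u+\bsigma,\ \nabla_h v_{cr}+A^{-1}\btau_{rt}) + (\gradt\bsigma+Xu,\ \gradt\btau_{rt}+X_h v_{cr})
\]
shows that the first inner product vanishes identically, because its first factor is $A\nabla u+\bsigma=0$, while the second reduces to $(f,\gradt\btau_{rt}+X_h v_{cr})$. This is exactly $F_h(\btau_{rt},v_{cr})$, and crucially the computation never required $v_{cr}$ to be conforming.

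Subtracting the discrete equation \eqref{cr-lsfem-divl2} from this identity and using bilinearity of $b_h$ in its first argument delivers the claimed error equation \eqref{error_eq}. The step I expect to be the genuinely delicate one --- and the reason the lemma is worth stating, in contrast to standard nonconforming schemes where Galerkin orthogonality fails through a consistency error --- is precisely the passage around the conforming test space: the orthogonality survives the nonconformity only because the least-squares residuals of the exact solution vanish pointwise, so $b_h((\bsigma,u),\cdot)$ is insensitive to whether its test argument lies in $H^1_D(\O)$ or merely in the broken space $W_D^{1+cr}$.
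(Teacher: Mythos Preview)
Your proof is correct and follows essentially the same approach as the paper: both exploit that $(\bsigma,u)$ satisfies the first-order system \eqref{1os} in $L^2$, expand $b_h((\bsigma,u),(\btau_{rt},v_{cr}))$ to obtain $(f,\gradt\btau_{rt}+X_h v_{cr})$, and subtract the discrete equation \eqref{cr-lsfem-divl2}. Your additional commentary on why the nonconformity of $v_{cr}$ causes no consistency error---because the least-squares residuals vanish pointwise---is a nice clarification of the mechanism, but the argument itself is the same.
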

\begin{proof}
Since $\bsigma$ and $u$ satisfy the first-order system \eqref{1os} in the $L^2$ sense, then for any $(\btau_{rt},v_{cr})\in RT_{0,N}\times V^{cr}_D$, we have
\begin{eqnarray*}
b_h((\bsigma,u), (\btau_{rt},v_{cr}))& =& (A\nabla u+ \bsigma, \nabla_h v_{cr}+ A^{-1}\btau_{rt})+(\gradt\bsigma+X u, \gradt\btau_{rt}+X_h v_{cr}) \\
&=& (f, \gradt\btau_{rt}+X_h v_{cr}) = b_h((\bsigma_{rt},u_{cr}), (\btau_{rt},v_{cr})).
\end{eqnarray*}
Thus the error equation is proved.
\end{proof}

\section{Discrete Existence and Uniqueness of the Two-Field Div CR-LSFEM: I. Coerciveness Assumption}
\setcounter{equation}{0}
We first present a coerciveness result of the two-field potential-flux div CR-
LSFEM with some assumptions of the coefficients ensuring the coerciveness of the original bilinear form $a(\cdot,\cdot)$ \eqref{a}.

\begin{assumption}\label{asmp_coef}
Assume that Assumption \ref{ass_dc} is true. It is further assumed that $\gradt\bb\in L^{\infty}(\O)$ and  $c\in L^{\infty}(\O)$ satisfy the following conditions:
$c-\frac12\gradt\bb\geq 0$ in $\O$ and $\bn\cdot\bb\geq 0$ on $\Gamma_N$.
\end{assumption}

\begin{thm}
Assuming that Assumption \eqref{asmp_coef} is true and the mesh size of $\cT$ is small enough, there exists constants $C_1 > 0$ and $C_2>0$ independent of mesh size, such that
\beq \label{coer1}
C_1\tri (\btau_{rt}, v_{cr}) \tri ^2 \leq \cJ^{div}_{h}(\btau_{rt}, v_{cr};0)
\leq C_2\tri (\btau_{rt}, v_{cr}) \tri ^2 \quad \forall (\btau_{rt},v_{cr})\in RT_{0,N}\times V^{cr}_D,
\eeq
\end{thm}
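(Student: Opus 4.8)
The plan is to prove the right-hand (continuity) inequality by elementary bounds and the left-hand (coerciveness) inequality by reducing it to a discrete Gårding-type estimate for the broken bilinear form attached to $a(\cdot,\cdot)$. The continuity constant $C_2$ costs nothing: applying the triangle inequality to the two terms of $\cJ^{div}_{h}$, using the spectral bounds \eqref{A} on $A$ and $A^{-1}$, bounding $\|X_h v_{cr}\|_0\le \|\bb\|_{\infty}\|\nabla_h v_{cr}\|_0+\|c\|_{\infty}\|v_{cr}\|_0$, and finally invoking the discrete Poincar\'e--Friedrichs inequality \eqref{dPF} (legitimate since $V^{cr}_D\subset W^{1}_D(\cT)$) to absorb $\|v_{cr}\|_0$ into $\|\nabla_h v_{cr}\|_0$, gives $\cJ^{div}_{h}(\btau_{rt},v_{cr};0)\le C_2\tri(\btau_{rt},v_{cr})\tri^2$ with $C_2$ independent of the mesh and needing neither Assumption \ref{asmp_coef} nor mesh smallness.

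For the coerciveness constant $C_1$, set $r_1:=A^{1/2}\nabla_h v_{cr}+A^{-1/2}\btau_{rt}$ and $r_2:=\gradt\btau_{rt}+X_h v_{cr}$, so that $\cJ^{div}_{h}(\btau_{rt},v_{cr};0)=\|r_1\|_0^2+\|r_2\|_0^2$. Solving $\btau_{rt}=A^{1/2}r_1-A\nabla_h v_{cr}$ and $\gradt\btau_{rt}=r_2-X_h v_{cr}$ and using \eqref{dPF} yields $\|\btau_{rt}\|_0^2+\|\gradt\btau_{rt}\|_0^2\le C\big(\cJ^{div}_{h}(\btau_{rt},v_{cr};0)+\|\nabla_h v_{cr}\|_0^2\big)$, so the whole lower bound reduces to the single estimate $\|\nabla_h v_{cr}\|_0^2\le C\,\cJ^{div}_{h}(\btau_{rt},v_{cr};0)$. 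To obtain it I introduce the broken bilinear form $a_h(v_{cr},v_{cr}):=(A\nabla_h v_{cr},\nabla_h v_{cr})+(X_h v_{cr},v_{cr})$. Writing $A\nabla_h v_{cr}=A^{1/2}r_1-\btau_{rt}$, applying the discrete integration-by-parts identity \eqref{discrete_ibp} in the form $(\btau_{rt},\nabla_h v_{cr})=-(v_{cr},\gradt\btau_{rt})$, and substituting $\gradt\btau_{rt}=r_2-X_h v_{cr}$, the lower-order terms cancel and I arrive at the key identity
\[
a_h(v_{cr},v_{cr})=(A^{1/2}r_1,\nabla_h v_{cr})+(r_2,v_{cr}).
\]
By Cauchy--Schwarz and \eqref{dPF} this gives $a_h(v_{cr},v_{cr})\le C\,\cJ^{div}_{h}(\btau_{rt},v_{cr};0)^{1/2}\|\nabla_h v_{cr}\|_0$, so it suffices to prove the discrete coerciveness $a_h(v_{cr},v_{cr})\ge\alpha\|\nabla_h v_{cr}\|_0^2$ for some $\alpha>0$, which then closes the chain.

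This discrete coerciveness is where Assumption \ref{asmp_coef} and the mesh restriction enter. I would integrate the convection term by parts element by element, $\sum_K(\bb\cdot\nabla v_{cr},v_{cr})_K=-\tfrac12(\gradt\bb\,v_{cr},v_{cr})+\tfrac12\sum_K(\bb\cdot\bn_K\,v_{cr},v_{cr})_{\p K}$, which is legitimate because $\gradt\bb\in L^\infty(\O)$ makes $\bb$ an $H(\divvr)$ field with single-valued normal traces. Collecting terms gives $a_h(v_{cr},v_{cr})=(A\nabla_h v_{cr},\nabla_h v_{cr})+\big((c-\tfrac12\gradt\bb)v_{cr},v_{cr}\big)+\tfrac12(\bb\cdot\bn\,v_{cr},v_{cr})_{\Gamma_N}+R(v_{cr})$, where the first term is $\ge\Lambda_0\|\nabla_h v_{cr}\|_0^2$ by \eqref{A} and the second and third are nonnegative \emph{exactly} by the sign conditions $c-\tfrac12\gradt\bb\ge0$ and $\bn\cdot\bb\ge0$ on $\Gamma_N$ of Assumption \ref{asmp_coef}. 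The leftover $R(v_{cr})=\tfrac12\sum_{F\in\cE_{int}}(\bb\cdot\bn_F,\jump{v_{cr}^2})_F+\tfrac12\sum_{F\in\cE_D}(\bb\cdot\bn_F,v_{cr}^2)_F$ is the consistency error from the discontinuity of $v_{cr}^2$ across faces. Using $\jump{v_{cr}^2}=2\{v_{cr}\}\jump{v_{cr}}$, the mean-zero property $\int_F\jump{v_{cr}}=0$ (and $\int_F v_{cr}=0$ on $\cE_D$) to subtract a constant before estimating, together with the jump--gradient bounds \eqref{ejump}--\eqref{esE4} and a scaled trace/Poincar\'e inequality on each face, I expect $|R(v_{cr})|\le C\,h\,\|\nabla_h v_{cr}\|_0^2$. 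Requiring the (locally) small mesh size so that $Ch\le\Lambda_0/2$ then leaves $a_h(v_{cr},v_{cr})\ge\tfrac12\Lambda_0\|\nabla_h v_{cr}\|_0^2$, finishing the argument.

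The main obstacle is precisely the control of $R(v_{cr})$: the sign conditions dispose of the volume and Neumann contributions but give no help on the interior and Dirichlet face terms, so the entire mesh-size restriction originates in extracting a factor of $h$ from $R$ by exploiting the vanishing mean of the Crouzeix--Raviart jumps. This is the step I would treat most carefully, and it explains why the coercive case still needs $h$ small, though only locally and with no regularity of the exact solution required.
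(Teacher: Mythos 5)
Your proposal is correct and follows essentially the same route as the paper's proof: the same discrete integration by parts \eqref{discrete_ibp} applied to $(\btau_{rt},\nabla_h v_{cr})$, the same elementwise integration by parts of the convection term with the sign conditions of Assumption \ref{asmp_coef} disposing of the volume and $\Gamma_N$ contributions, the same use of the Crouzeix--Raviart mean-zero jump property and scaled trace estimates to extract a factor $h_F$ from the interior and Dirichlet face terms, and the same final absorption under a (local) mesh-size restriction, followed by triangle inequalities to recover $\|\btau_{rt}\|_0$ and $\|\gradt\btau_{rt}\|_0$. The only difference is organizational: you package the argument as discrete coerciveness of $a_h$ together with the identity $a_h(v_{cr},v_{cr})=(A\nabla_h v_{cr}+\btau_{rt},\nabla_h v_{cr})+(\gradt\btau_{rt}+X_h v_{cr},v_{cr})$, whereas the paper expands $\|A^{1/2}\nabla_h v_{cr}\|_0^2$ directly and bounds $-(X_h v_{cr},v_{cr})$ in place; the two computations coincide after rearrangement.
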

\begin{proof} 
The upper bound is easy to prove by the triangle and the discrete Poincar\'e inequalities. Our main task is to prove the coerciveness.

Using integration by parts, we can easily derive that
$
(\bb\cdot\nabla_h v_{cr},v_{cr}) = -\frac12(\gradt\bb,v_{cr}^2) + \frac12\sum_{K\in\cT}(\bn\cdot\bb , v_{cr}^2)_{\p K}.
$
Then due to fact that $\bn\cdot\bb\geq 0$ on $\Gamma_N$ and the uniform bound of $c-\frac12\gradt\bb\geq 0$ in $\O,$
\begin{eqnarray*}
 -(X_h v_{cr},v_{cr}) &=& -(c-\frac12\gradt\bb,v_{cr}^2) -\frac12\sum_{K\in\cT}(\bn\cdot\bb , v_{cr}^2)_{\p K} \leq - \frac12\sum_{K\in\cT}(\bn\cdot\bb , v_{cr}^2)_{\p K}
\\
&\leq &- \frac12\sum_{F\in\cE_{int}}(\bn\cdot\bb, \jump{v_{cr}^2})_F
- \frac12\sum_{F\in\cE_D}(\bn\cdot\bb, v_{cr}^2)_F.
\end{eqnarray*}
For any $v_{cr}\in V^{cr}$ and any $F\in\cE_{int}$, by the fact that $\int_F \jump{v_{cr}} ds =0$, the mean value of $v_{cr}$ over $F$ is single-valued constant, 
$
\overline{v}_{cr,F} = \Pi_F^0  v_{cr}|_{K_F^+} = \Pi_F^0  v_{cr}|_{K_F^-}, 
$
where $K_F^+$ and $K_F^-$ are two elements sharing the common $F$. We also have $\overline{v}_{cr,F} =0$ for $F\in\cE_D$.  Then for an $F\in\cE_{int}$, we have
\begin{eqnarray*}
\frac{1}{2}(\bb\cdot\bn, \jump{v_{cr}^2})_F &=& (\bb\cdot\bn \jump{v_{cr}}, \{v_{cr}\})_F \leq 
|\bb\cdot\bn|_{\infty,F}|(\jump{v_{cr}}, \{v_{cr}\})_F| \\
&=& |\bb\cdot\bn|_{\infty,F}|(\jump{v_{cr}}, \{v_{cr}-\overline{v}_{cr,F}\})_F| \\
&\leq& |\bb\cdot\bn|_{\infty,F} \| \jump{v_{cr}}\|_{F}  (\|v_{cr}|_{K_F^-}  - \overline{v}_{cr,F}\|_{0,F}+\|v_{cr}|_{K_F^+} - \overline{v}_{cr,F}\|_{0,F})/2.
\end{eqnarray*}
We have (see e.g. p.110 of \cite{Braess:07})
$$
 \|v_{cr}|_{K_F} - \overline{v}_{cr,F}\|_{0,F} \leq C h_F^{1/2} \|\nabla v_{cr}\|_{0,K_F}.
$$
Combing the above estimate and \eqref{ejump}, we get 
\begin{eqnarray*}
(\bb\cdot\bn, \jump{v_{cr}^2})_F \leq C_F h_F|\bb\cdot\bn|_{\infty,F} \|\nabla v_{cr}\|_{0, K_F^-\cup K_F^+}^2 \quad \mbox{ for } F\in\cE_{int},
\end{eqnarray*}
where $C_F$ only depends on the shape of $K\in K_F^-\cup K_F^+$.
Similarly,   we have 
$$
(\bb\cdot\bn, v_{cr}^2)_F \leq C_F h_F|\bb\cdot\bn|_{\infty,F} \|\nabla v_{cr}\|_{0, K_F}^2
\quad \mbox{ for } F\in\cE_{D}.
$$
Thus, we have
\beq \label{Xvv}
-(X_h v_{cr},v_{cr}) \leq C\sum_{K\in\cT}(\sup_{F\in \p K \cap(\cE_{int} \cup \cE_D)} |\bb\cdot\bn|_{\infty,F}h_F )\|\nabla v_{cr}\|^2_{0,K}.
\eeq
By \eqref{discrete_ibp}, the discrete Poincar\'{e} inequality, and \eqref{Xvv},
\begin{eqnarray*}
\|A^{1/2}\nabla_hv_{cr}\|^2_0
&=&(A^{1/2}\nabla_hv_{cr}+A^{-1/2}\btau_{rt},A^{1/2}\nabla_hv_{cr})
+(\gradt\btau_{rt}+X_h v_{cr},v_{cr}) -(X_h v_{cr},v_{cr})\\
&\leq& C\|A^{1/2}\nabla_hv_{cr}+A^{-1/2}\btau_{rt}\|_0\|\nabla_h v_{cr}\|_0
+C\|\gradt\btau_{rt}+X_h v_{cr}\|_0\|\nabla_h v_{cr}\|_0\\
&&+C_0\sum_{K\in\cT}(\sup_{F\in \p K \cap(\cE_{int} \cup \cE_D)} |\bb\cdot\bn|_{\infty,F}h_F )\|\nabla v_{cr}\|^2_{0,K}.
\end{eqnarray*}
where $C_0$ only depends on the shape regularity of the mesh $\cT$. Choosing the mesh size small enough such that on each element $K$, 
\beq \label{chose_h}
C_0\sup_{F\in \p K \cap(\cE_{int} \cup\cE_{D})}  |\bn_F\cdot\bb|_{\infty,F}h_F<\frac12\Lambda_0|_K,
\eeq
we have the following bound for a $C>0$ independent of the mesh size,
\beq 
C\|\nabla_hv_{cr}\|^2_0 \leq \cJ^{div}_{h}(\btau_{rt},v_{cr};0).
\eeq
An application of the triangle inequality shows that
\beq \label{tmp1}
C\|\btau_{rt}\|_0 \leq \|A^{1/2}\nabla_hv_{cr}+A^{-1/2}\btau_{rt}\|_0+ C \|\nabla_hv_{cr}\|_0, 
\eeq
\beq \label{tmp2}
\mbox{and  } C\|\gradt \btau_{rt}\|_0 \leq \|\gradt\btau_{rt}+X_h v_{cr}\|_0 + \|X_hv_{cr}\|_0 \leq  
\|\gradt\btau_{rt}+X_h v_{cr}\|_0+ C \|\nabla_hv_{cr}\|_0.
\eeq
The lemma is then proved.
\end{proof}
\begin{rem}
We know from \eqref{chose_h} that the mesh size needs not to be extremely small. It depends on the shape of the element, the coefficient $A$ on the element, and $\bb\cdot\bn_F$ on its faces/edges. Also, \eqref{chose_h} is a local result suitable for an adaptively refined non-uniform mesh. 

No regularity assumption is needed in this proof.

Assumption \ref{asmp_coef} is not the most general one to ensure the coerciveness of the bilinear form $a$. One can find in \cite{QV:94} for a more general setting, and the analysis presented in this section can also be extended to the more general setting.  The proof in this section shows that an explicit assumption on the coefficient helps determine the local mesh size to ensure the coerciveness of the discrete problem.

We also notice that even with Assumption \ref{asmp_coef}, the original bilinear form for the pure CR finite element is only coercive under the assumption that the mesh is fine enough.
\end{rem}
With the coerciveness result \eqref{coer1}, the error equation \eqref{error_eq}, local approximation properties \eqref{rti}, \eqref{divrti}, and \eqref{localcr}, we immediately have the following a priori error estimate.
\begin{thm}For piecewise constant function $s$ on mesh $\cT$ with $s_K = s|_K$ satisfying $0<s_K \leq 1$, assume that $u|_K \in H^{1+s_K}(K)$, $\bsigma|_K \in H^{s_K}(K)$, and $\gradt\bsigma|_K \in H^{s_K}(K)$,  for $K\in \cT$. We also assume that Assumption \eqref{asmp_coef} is true and the mesh size of $\cT$ is small enough, then the following a priori error estimate is true:
\begin{eqnarray*} \label{apriori_ls}
\tri (\bsigma-\bsigma_{rt},u-u_{cr})\tri &\leq & C\inf_{(\btau_{rt},v_{rt})\in RT_{0,N}\times V^{cr}_D}\tri (\bsigma-\btau_{rt},u-v_{cr})\tri
\\
&\leq &C\sum_{K\in\cT}h_K^{s_K} (|u|_{1+s_K,K}+ |\bsigma|_{s_K,K} +  |\gradt\bsigma|_{s_K,K}).
\end{eqnarray*}
\end{thm}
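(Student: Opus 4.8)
The plan is to prove the two displayed inequalities separately: a C\'ea-type quasi-optimality estimate for the first, and an explicit interpolation estimate for the second. The one point requiring care is that the coerciveness \eqref{coer1} is available \emph{only} on the discrete space $RT_{0,N}\times V^{cr}_D$, whereas the total error $(\bsigma-\bsigma_{rt},u-u_{cr})$ has its potential component in $W_D^{1+cr}$ and is not discrete; I would therefore route the argument through the discrete difference rather than apply coerciveness to the full error.

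For the quasi-optimality, I first record that $b_h$ is continuous in the triple norm on $H_N(\divvr;\O)\times W_D^{1+cr}$ with an $h$-independent constant: the first-order terms are controlled by Cauchy--Schwarz together with the $L^\infty$ bounds on $A$ and $A^{-1}$, while the zeroth-order contribution $X_h w=\bb\cdot\nabla_h w+cw$ is handled by $\|X_h w\|_0\le C(\|\nabla_h w\|_0+\|w\|_0)\le C\|\nabla_h w\|_0$, the last step being the discrete Poincar\'e--Friedrichs inequality \eqref{dPF} (Theorem \ref{disPoin}), whose constant does not depend on the mesh. Now fix any $(\btau_{rt},v_{cr})\in RT_{0,N}\times V^{cr}_D$ and set $\eta:=(\bsigma-\btau_{rt},u-v_{cr})$ and $d:=(\btau_{rt}-\bsigma_{rt},v_{cr}-u_{cr})$, so that the total error equals $\eta+d$ and $d$ is discrete. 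Applying coerciveness \eqref{coer1} to $d$ and then using $d=(\bsigma-\bsigma_{rt},u-u_{cr})-\eta$ together with the error equation \eqref{error_eq} (which annihilates $b_h((\bsigma-\bsigma_{rt},u-u_{cr}),d)$ since $d$ is discrete), I obtain $C_1\tri d\tri^2\le b_h(d,d)=-b_h(\eta,d)\le C\tri\eta\tri\,\tri d\tri$, hence $\tri d\tri\le C\tri\eta\tri$. A triangle inequality then gives $\tri(\bsigma-\bsigma_{rt},u-u_{cr})\tri\le\tri\eta\tri+\tri d\tri\le C\tri\eta\tri$, and taking the infimum over $(\btau_{rt},v_{cr})$ yields the first inequality.

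For the second inequality I would choose the concrete interpolants $\btau_{rt}=I_{rt}\bsigma$ and $v_{cr}=I^{cr}u$ and bound the three terms of $\tri\eta\tri^2=\|\nabla_h(u-I^{cr}u)\|_0^2+\|\bsigma-I_{rt}\bsigma\|_0^2+\|\gradt(\bsigma-I_{rt}\bsigma)\|_0^2$ elementwise. The two flux terms are exactly \eqref{rti} and \eqref{divrti}, giving $\|\bsigma-I_{rt}\bsigma\|_{0,K}\le Ch_K^{s_K}|\bsigma|_{s_K,K}$ and $\|\gradt(\bsigma-I_{rt}\bsigma)\|_{0,K}\le Ch_K^{s_K}|\gradt\bsigma|_{s_K,K}$. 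For the potential I would use the standard Crouzeix--Raviart gradient estimate $\|\nabla_h(u-I^{cr}u)\|_{0,K}\le Ch_K^{s_K}|u|_{1+s_K,K}$, which accompanies the $L^2$ estimate \eqref{localcr} and follows from the fact that $\nabla(I^{cr}_K u)$ is the $L^2(K)$-projection of $\nabla u$ onto constants. Summing the squared local bounds over $K\in\cT$ and taking square roots gives $\tri\eta\tri^2\le C\sum_K h_K^{2s_K}(|u|_{1+s_K,K}^2+|\bsigma|_{s_K,K}^2+|\gradt\bsigma|_{s_K,K}^2)$, and the elementary inequality $(\sum_K a_K^2)^{1/2}\le\sum_K a_K$ converts this into the stated bound.

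The routine pieces are the continuity estimate and the interpolation bounds; the one genuine obstacle is conceptual rather than computational, namely that coerciveness does not hold on the full energy space. I expect the decomposition (total error) $=\eta+d$ with $d$ discrete, combined with the Galerkin orthogonality \eqref{error_eq}, to be the decisive step that lets the discrete-only coerciveness \eqref{coer1} still deliver quasi-optimality. A secondary point worth flagging is that the stated local property \eqref{localcr} controls only the $L^2$ interpolation error, so the gradient (nonconforming-norm) estimate for $I^{cr}$ must be invoked separately as a standard fact.
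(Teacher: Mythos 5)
Your proposal is correct and follows essentially the same route the paper intends: coerciveness \eqref{coer1} combined with the error equation \eqref{error_eq} yields quasi-optimality, and the local approximation properties \eqref{rti}, \eqref{divrti} (plus the Crouzeix--Raviart interpolation estimate) give the explicit rate. You merely spell out two details the paper leaves implicit --- applying the discrete-only coerciveness to the discrete difference $(\btau_{rt}-\bsigma_{rt},v_{cr}-u_{cr})$ rather than to the full error, and invoking the broken-$H^1$ estimate for $I^{cr}$ (since \eqref{localcr} is only the $L^2$ bound) --- both of which are handled correctly.
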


\section{Discrete Existence and Uniqueness of the Two-Field Div CR-LSFEM: II. Schatz's Argument}
\setcounter{equation}{0}
In this section, we present a discrete existence and uniqueness proof based on the argument of Schatz \cite{Sch:74} without assuming Assumption \ref{asmp_coef} to ensure the coerciveness of the bilinear form $a$. Thus, the original variational problem \eqref{vp1} can be indefinite. 

In this proof, we only assume one of the conditions of Theorem \ref{assp} and a minimal regularity of the original and adjoint elliptic equations.
First, we prove an $L^2$-error estimate of the two-field potential-flux div method, and then a Garding-like inequality is proved. With these tools, we can get an a priori estimate, then the existence and uniqueness of the discrete problem follow.

\subsection{An $L^2$ estimate of the potential-flux div CR-LSFEM}
In this subsection, we present an $L^2$-error estimate of the two-field potential-flux div CR-LSFEM based on the argument of \cite{CK:06}. We also correct a minor mistake in the original proof of \cite{CK:06}.

We assume the following very mild regularity assumption.
\begin{assumption}\label{regularity}
Assume one of the conditions of Theorem \ref{assp} is true, and the following $H^{1+s}$ regularity estimates are true for \eqref{vp1} and \eqref{vp2}, respectively:
\beq\label{regularityeq}
\|u\|_{1+s} \leq C \|f\|_{0} \quad\mbox{and}\quad \|z\|_{1+s} \leq C \|g\|_{0}, \quad \mbox{for some }
0<s\leq 1.
\eeq
\end{assumption}
To discuss  an $L^2$-error estimate, we introduce a first-order system:
\begin{equation}\label{first_order_g}
\left\{
\begin{array}{llll}
A\grad w +   \bgamma  & =& A\nabla z & \mbox{in } \O
 \\[1mm]
\gradt\bgamma + Xw &=& z & \mbox{in } \O,
\end{array}
\right.
\end{equation}
with boundary conditions $
w=0 \mbox{ on }\Gamma_D$ and
$\bn\cdot\bgamma=0 \mbox{ on } \Gamma_N$.

\begin{lem}
Let $z$ be the solution of the adjoint problem \eqref{vp2} and $(\bgamma,w)$ be the functions defined in \eqref{first_order_g}, we have the following estimates under Assumption \ref{regularity}:
\beq\label{bdd_g}
\|w\|_{1+s}+ \|\bgamma\|_s +\|\gradt \bgamma\|_s\leq C\|g\|_0.
\eeq
\end{lem}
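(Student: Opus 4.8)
The plan is to eliminate the auxiliary flux $\bgamma$ and, after a shift of $w$ by $z$, recognize the resulting scalar unknown as the solution of a genuine divergence-form elliptic problem to which the regularity hypothesis in Assumption \ref{regularity} applies directly.

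First I would use the first equation of \eqref{first_order_g} to write $\bgamma = A\nabla(z-w)$ and substitute it into the second equation, obtaining the second-order relation $-\gradt(A\nabla w) + Xw = z - \gradt(A\nabla z)$. The right-hand side here is \emph{not} in $L^2(\O)$, since $\gradt(A\nabla z)$ only lives in a negative-order space for $z\in H^{1+s}$; the crucial observation is that this bad term can be absorbed. Introducing $\eta := w - z$, the terms $\gradt(A\nabla w)$ and $\gradt(A\nabla z)$ combine into $\gradt(A\nabla\eta)$, and a short computation gives $-\gradt(A\nabla\eta) + X\eta = z - Xz = (1-c)z - \bb\cdot\nabla z =: F$, where now $F\in L^2(\O)$ with $\|F\|_0 \le C\|z\|_1$ because $c,\bb\in L^\infty(\O)$.

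Next I would verify that $\eta$ carries exactly the boundary conditions of the primal problem \eqref{vp1}: since $w = z = 0$ on $\Gamma_D$ we have $\eta = 0$ on $\Gamma_D$, and since $\bgamma\cdot\bn = 0$ on $\Gamma_N$ with $\bgamma = -A\nabla\eta$, we get $A\nabla\eta\cdot\bn = 0$ on $\Gamma_N$. Hence $\eta$ is the weak solution of a problem of the form \eqref{vp1} with data $F$, and the $H^{1+s}$ regularity estimate of Assumption \ref{regularity} gives $\|\eta\|_{1+s} \le C\|F\|_0 \le C\|z\|_1 \le C\|z\|_{1+s} \le C\|g\|_0$, where the last two inequalities use the regularity estimate for the adjoint solution $z$. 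The triangle inequality then yields $\|w\|_{1+s} \le \|\eta\|_{1+s} + \|z\|_{1+s} \le C\|g\|_0$.

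Finally I would recover the flux bounds: from $\bgamma = A\nabla(z-w)$ one has $\|\bgamma\|_s \le C\|\nabla(z-w)\|_s \le C(\|z\|_{1+s} + \|w\|_{1+s})$, and from the second equation $\gradt\bgamma = z - Xw$, so $\|\gradt\bgamma\|_s \le \|z\|_s + \|Xw\|_s \le C(\|z\|_{1+s} + \|w\|_{1+s})$; both are then bounded by $C\|g\|_0$. The main obstacle I anticipate is precisely the absorption step: without the shift $\eta = w - z$ the right-hand side of the equation for $w$ fails to lie in $L^2$, so the regularity estimate cannot be invoked, and the whole argument rests on this cancellation of $\gradt(A\nabla z)$. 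A secondary technical point is that the fractional norms $\|\bgamma\|_s$ and $\|Xw\|_s$ require multiplication by $A$, $\bb$, and $c$ to be bounded on $H^s$; this is consistent with, and implicitly required by, the coefficient smoothness already underlying the $H^{1+s}$ regularity hypothesis.
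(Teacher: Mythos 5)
Your proposal is correct and follows essentially the same route as the paper: your shifted variable $\eta=w-z$ is exactly the paper's $y=w-z$, which satisfies $-\gradt(A\nabla\eta)+X\eta=z-Xz$ with the homogeneous boundary conditions of \eqref{vp1}, so the $H^{1+s}$ regularity applies and the bounds for $w$, $\bgamma=-A\nabla\eta$, and $\gradt\bgamma=z-Xw$ follow as you describe. The paper motivates the shift by the nonstandard Neumann condition $A\nabla w\cdot\bn=(\bb\cdot\bn)z$ on $\Gamma_N$ (its correction to \cite{CK:06}) rather than by the right-hand side failing to be in $L^2$, but these are two faces of the same obstruction and your argument handles both.
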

\begin{proof}
The first-order system \eqref{first_order_g} can be understood in the PDE form as:
$$
-\gradt(A\grad w) + \bb\cdot\grad w + cw = z - \gradt(A\nabla z) \mbox{ in } \O,
$$
with boundary conditions 
$$
w=0 \mbox{ on } \Gamma_D\quad\mbox{and}\quad A\nabla w\cdot\bn =  A\nabla z\cdot\bn = (\bb\cdot \bn)z \mbox{  on }\Gamma_N. 
$$
Note that the boundary condition of $w$ on $\Gamma_N$ is not standard. Thus the claim in p.1729 (below (5.7)) of the paper \cite{CK:06} is not accurate. The boundary condition shows that $w$ is not a solution of $w\in H_D^1(\O)$ with
$
a(w,v) = (z - \gradt(A\nabla z), v)$, for all $v\in H_D^1(\O)$. 
We can not use the regularity result of \eqref{vp1} directly. To fix this, introduce 
$
y = w-z \in H_D^1(\O).
$ 
Then 
$$
\left\{
\begin{array}{llll}
A\nabla y  +   \bgamma  & =& 0 & \mbox{in } \O
 \\[1mm]
\gradt\bgamma + X y &=& z - X z& \mbox{in } \O.
\end{array}
\right.
$$
Then we have $y \in H_D^1(\O)$ satisfying 
$$
-\gradt(A\nabla y) + X y = z - X z \quad \mbox{in } \O,
\quad
 y=0\mbox{ on } \Gamma_D,\quad A\nabla y \cdot\bn =0 \mbox{  on }\Gamma_N.
$$
Or, equivalently,
$$
a(y,v) = (z - Xz, v) \quad \forall v\in H_D^1(\O).
$$
Then by the regularity assumption \eqref{regularityeq} (first for $y$, then for $z$), we have
$$
\|y\|_{1+s} \leq C \|z - X z\|_{0} \leq C \|z\|_{1} \leq  C\|g\|_0.
$$
Then, the following estimate for $w$ is true:
$$
\|w\|_{1+s} = \|y+z\|_{1+s} \leq  \|y\|_{1+s}+\|z\|_{1+s} \leq C\|g\|_0.
$$
We also have
\begin{eqnarray*}
\|\bgamma\|_s &=& \|A\nabla y\|_s \leq C\|y\|_{1+s} \leq C\|g\|_0,\\
\|\gradt \bgamma\|_s &=& \|z-Xw\|_s \leq C(\|z\|_1 + \|w\|_{1+s}) \leq C\|g\|_0.
\end{eqnarray*}
This completes the proof of the lemma.
\end{proof}
\begin{lem}\label{estjump}
 Assume $h$ is the maximum mesh size of the mesh $\cT$. Let $z$ be the solution of \eqref{vp2}, $u$ be the solution of \eqref{pde1},  and $u_{cr}$ be the solution of \eqref{cr-lsfem-divl2}, then the following inequality holds under  Assumption \ref{regularity}:
\beq
 \sum_{F\in\cE_{int}\cup\cE_D}((A\nabla z+\bb z)\cdot\bn, \jump{u_{cr}})_{F} \leq C h^s \|\nabla_h (u-u_{cr})\|_0 \|g\|_0.
\eeq
\end{lem}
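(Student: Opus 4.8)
The plan is to exploit the defining zero-mean property of the Crouzeix--Raviart jumps, which supplies an extra factor of $h^{1/2}$ per face, together with the $H^s$-regularity of the adjoint flux. Denote by $\bsigma_z := A\nabla z + \bb z$ the flux of the adjoint solution $z$. The boundary condition in \eqref{pde2} gives $\bsigma_z\cdot\bn = 0$ on $\Gamma_N$, which is exactly why only the faces in $\cE_{int}\cup\cE_D$ contribute. Moreover, the $H^{1+s}$-regularity estimate \eqref{regularityeq} of $z$ yields $\bsigma_z\in H^s$ with $\|\bsigma_z\|_s \le C(\|z\|_{1+s} + \|z\|_s) \le C\|z\|_{1+s}\le C\|g\|_0$, exactly as in the derivation of \eqref{bdd_g}.

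First I would subtract the face averages of $\bsigma_z\cdot\bn$. By the definition of $V^{cr}_D$ we have $\int_F\jump{u_{cr}}\,ds = 0$ for $F\in\cE_{int}$ and $\int_F u_{cr}\,ds = 0$ for $F\in\cE_D$, so that $\Pi_F^0\jump{u_{cr}} = 0$ in both cases. Hence, for every $F\in\cE_{int}\cup\cE_D$, the Cauchy--Schwarz inequality gives
\[
(\bsigma_z\cdot\bn, \jump{u_{cr}})_F = \big(\bsigma_z\cdot\bn - \Pi_F^0(\bsigma_z\cdot\bn),\, \jump{u_{cr}}\big)_F
\le \|\bsigma_z\cdot\bn - \Pi_F^0(\bsigma_z\cdot\bn)\|_{0,F}\,\|\jump{u_{cr}}\|_{0,F}.
\]
This step is the conceptual heart of the estimate: without removing the average one would only control the sum by $O(1)$, whereas the deviation from the face average is what produces the gain in powers of $h$.

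Next I would bound the two local factors. For the jump factor, since $u\in H^1_D(\O)$ is single-valued (so $\jump{u}=0$ on interior faces and $u=0$ on $\Gamma_D$), the identity \eqref{ejump} applies directly and gives $\|\jump{u_{cr}}\|_{0,F}\le C h_F^{1/2}\,\|\nabla_h(u-u_{cr})\|_{0,\omega_F}$, where $\omega_F$ denotes the union of the elements sharing $F$. For the flux factor, a scaled trace inequality combined with the local Raviart--Thomas approximation property \eqref{rti} yields $\|\bsigma_z\cdot\bn - \Pi_F^0(\bsigma_z\cdot\bn)\|_{0,F}\le C h_F^{s-1/2}\,|\bsigma_z|_{s,K_F}$; note that $\Pi_F^0(\bsigma_z\cdot\bn)$ is precisely the normal trace of the lowest-order Raviart--Thomas interpolant of $\bsigma_z$. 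I expect this trace--interpolation estimate to be the main obstacle: controlling the $L^2(F)$-norm of the normal-trace interpolation error by the $H^s$-seminorm on the element requires a fractional trace argument on the reference element (via the Piola transform), which is delicate because $s$ may fall below the threshold $1/2$ at which an $H^s$-function has a classical $L^2$-trace.

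Finally I would assemble the global bound. Multiplying the two local estimates makes each face contribute at most $C h_F^s\,|\bsigma_z|_{s,K_F}\,\|\nabla_h(u-u_{cr})\|_{0,\omega_F}$. Summing over $F\in\cE_{int}\cup\cE_D$, applying the discrete Cauchy--Schwarz inequality, and using the finite overlap of the patches $K_F$ and $\omega_F$ together with $h_F\le h$, I obtain
\[
\sum_{F\in\cE_{int}\cup\cE_D}(\bsigma_z\cdot\bn,\jump{u_{cr}})_F
\le C h^s\Big(\sum_{F}|\bsigma_z|_{s,K_F}^2\Big)^{1/2}\Big(\sum_F\|\nabla_h(u-u_{cr})\|^2_{0,\omega_F}\Big)^{1/2}
\le C h^s \|\bsigma_z\|_s\,\|\nabla_h(u-u_{cr})\|_0.
\]
Combining this with $\|\bsigma_z\|_s\le C\|g\|_0$ established at the outset yields the claimed inequality.
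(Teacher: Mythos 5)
Your overall strategy is the right one and matches the paper's in spirit: you work with the adjoint flux $\bxi=A\nabla z+\bb z$, exploit the zero mean of the CR jumps to subtract a facewise-constant normal flux, bound $\|\jump{u_{cr}}\|_{0,F}$ by $Ch_F^{1/2}\|\nabla_h(u-u_{cr})\|_{0,\omega_F}$ exactly as in \eqref{jumperror}, and sum with Cauchy--Schwarz. However, the step you yourself flag as the main obstacle is a genuine gap, not just a technicality: the estimate $\|\bxi\cdot\bn-\Pi_F^0(\bxi\cdot\bn)\|_{0,F}\le C h_F^{s-1/2}|\bxi|_{s,K_F}$ is not available for $0<s\le 1/2$, a range explicitly permitted by Assumption \ref{regularity}. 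For such $s$ an $H^s(K)$ vector field has no $L^2(F)$ trace, so the left-hand side is not even well defined, and no Piola/reference-element argument can rescue a bound of this form from $|\bxi|_{s,K_F}$ alone. Since your whole face-by-face estimate hinges on taking the $L^2(F)$ norm of the flux error, the argument as written only covers $s>1/2$.

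The paper closes exactly this hole by never measuring the flux error on the face. It inserts the Raviart--Thomas interpolant $\bxi_{rt0}$ (whose normal component is facewise constant, so $(\bxi_{rt0}\cdot\bn,\jump{u_{cr}})_F=0$, the same cancellation you use via $\Pi_F^0$) and then estimates the remaining pairing with an elementwise $H(\divvr)$ trace inequality (Lemma 2.4 and Remark 2.5 of \cite{CHZ:17}):
\begin{equation*}
\big((\bxi-\bxi_{rt0})\cdot\bn,\jump{u_{cr}}\big)_F
\le C\,h_F^{-1/2}\|\jump{u_{cr}}\|_{0,F}\big(\|\bxi-\bxi_{rt0}\|_{0,K}+h_K\|\gradt(\bxi-\bxi_{rt0})\|_{0,K}\big),
\end{equation*}
which requires only $L^2$ control of the interpolation error and of its divergence on elements. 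These are supplied by \eqref{rti} ($\|\bxi-\bxi_{rt0}\|_0\le Ch^s\|\bxi\|_s$) and by the elementary observation $\|\gradt(\bxi-\bxi_{rt0})\|_0\le\|\gradt\bxi\|_0=\|g-cz\|_0\le C\|g\|_0$, using that $\gradt\bxi=cz-g\in L^2(\O)$ so $\bxi\in H_N(\divvr;\O)$ --- a piece of information your proposal does not use but which is essential to handle small $s$. If you replace your fractional trace step by this $H(\divvr)$-based face estimate (keeping the rest of your argument intact), you recover the paper's proof; as it stands, your proof is incomplete for $s\le 1/2$.
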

\begin{proof}
Let $\bxi = A\nabla z+\bb z$, then $\gradt \bxi = cz-g \in L^2(\O)$, thus $\bxi \in H_N(\divvr;\O)$. By the regularity assumption on $z$, we have $\bxi \in H^s(\O)$. Define $\bxi_{rt0}$ to be $\bxi$'s interpolation in $RT_{0,N}$. We have 
$$
\|\bxi-\bxi_{rt0}\|_{0} \leq C h^s \|\bxi\|_{s}\quad\mbox{and}\quad
\|\gradt \bxi - \gradt \bxi_{rt0}\|_0 \leq \|\gradt \bxi\|_0 =\|g- cz\|_0 \leq C \|g\|_0.
$$ 
Summing up all elements and using the regularity assumption and the fact $0<s\leq 1$, we get
\beq \label{xierror}
\sum_{K\in\cT} (\|\bxi-\bxi_{rt0}\|_{0,K} +h_K\|\gradt (\bxi-\bxi_{rt0})\|_{0,K})\leq h^s \|g\|_0.
\eeq

By the property of $(\jump{u_{cr}}, 1)_F=0$ and the fact $\bxi_{rt0}\cdot\bn|_F$ is a constant, we have 
\beq
(\bxi_{rt0}\cdot\bn,\jump{u_{cr}})_F=0 \quad \forall F\in \cE_{int}\cup \cE_D.
\eeq
By the above result, the trace theorem of Lemma 2.4 and Remark 2.5 of \cite{CHZ:17}, \eqref{jumperror}, and \eqref{xierror}, we have
\begin{eqnarray*}
&&\sum_{F\in\cE_{int}\cup \cE_D}(\bxi\cdot\bn, \jump{u_{cr}})_{F} =
\sum_{F\in\cE_{int}\cup \cE_D}((\bxi-\bxi_{rt0})\cdot\bn, \jump{u_{cr}})_{F} \\
&\leq& C \sum_{F\in\cE_{int}\cup \cE_D} h_F^{-1/2}\|\jump{u_{cr}}\|_{0,F} (\|\bxi-\bxi_{rt0}\|_{0,K_F^-\cup K_F^+} + h_K \|\gradt (\bxi-\bxi_{rt0})\|_{0,K_F^-\cup K_F^+}) \\
&\leq& C h^s \|\nabla_h (u-u_{cr})\|_0 \|g\|_0.
\end{eqnarray*}
\end{proof}
We also have the following approximation property by \eqref{localcr}, \eqref{rti}, \eqref{divrti}, and \eqref{bdd_g},
\beq \label{appp}
\inf_{(\bgamma_h,w_h)\in RT_{0,N}\times V_D^{cr}}\tri (\bgamma-\bgamma_h,w-w_h ) \tri
\leq C h^s (\|\bgamma\|_s + \|\gradt \bsigma\|_s + \|w\|_{1+s}) \leq Ch^s\|g\|_0.
\eeq

\begin{thm}
Assume $h$ is the maximum mesh size of the mesh $\cT$. 
Let $(\bsigma,u)$ be the solution of \eqref{1os}  and $(\bsigma_{rt},u_{cr})$ be the solution of \eqref{cr-lsfem-divl2}, then the following $L^2$-error estimate holds under Assumption \ref{regularity}:
\beq\label{L2}
\|u-u_{cr}\|_0 \leq C h^s \tri (\bsigma-\bsigma_{rt},u-u_{cr}) \tri.
\eeq
\end{thm}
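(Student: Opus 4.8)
The plan is to run an Aubin--Nitsche duality argument, exactly as in the companion estimate of \cite{CK:06}, but keeping careful track of the nonconforming consistency terms that were mishandled there. First I would set $g := u - u_{cr}$ and let $z \in H^1_D(\O)$ be the solution of the adjoint problem \eqref{vp2} with this data, together with the auxiliary fields $(\bgamma, w)$ defined by the first-order system \eqref{first_order_g}. By the preceding lemma these satisfy the regularity bound \eqref{bdd_g} with right-hand side $\|g\|_0 = \|u-u_{cr}\|_0$, which is what will eventually supply the factor $h^s$.

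Writing $e_\sigma := \bsigma - \bsigma_{rt}$ and $e_u := u - u_{cr}$, I would represent $\|e_u\|_0^2 = (g, e_u)$ and substitute the adjoint PDE \eqref{pde2} in divergence form, $g = -\gradt(A\nabla z + \bb z) + c z$. Integrating by parts element by element and using that $\bxi := A\nabla z + \bb z$ lies in $H_N(\divvr;\O)$ (so its normal trace is single-valued across interior faces and vanishes on $\Gamma_N$), while $\jump{u}=0$ and $u=0$ on $\Gamma_D$, the boundary contributions collapse to a single face sum weighted by $\jump{u_{cr}}$ on $\cE_{int}\cup\cE_D$. This yields the identity
\[
\|e_u\|_0^2 = a_h(e_u, z) + \sum_{F\in\cE_{int}\cup\cE_D}\big((A\nabla z + \bb z)\cdot\bn,\, \jump{u_{cr}}\big)_F,
\]
where $a_h(e_u,z) := (A\nabla_h e_u, \nabla z) + (X_h e_u, z)$ is the broken bilinear form. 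The face sum is precisely the quantity estimated in Lemma \ref{estjump}, hence already bounded by $C h^s \|\nabla_h e_u\|_0\,\|g\|_0$.

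The main work is to estimate $a_h(e_u,z)$, and this is where the auxiliary system \eqref{first_order_g} enters. Substituting its two equations, $\nabla w + A^{-1}\bgamma = \nabla z$ and $\gradt\bgamma + X_h w = z$, into the definition of $b_h$ shows that $a_h(e_u,z) = b_h((e_\sigma, e_u), (\bgamma, w))$; the key cancellation is $(e_\sigma, \nabla z) + (\gradt e_\sigma, z) = 0$, which follows from integration by parts together with $z=0$ on $\Gamma_D$ and $e_\sigma\cdot\bn = 0$ on $\Gamma_N$. Now I would invoke the error equation \eqref{error_eq}: subtracting any discrete pair $(\bgamma_h, w_h) \in RT_{0,N}\times V_D^{cr}$ leaves $a_h(e_u, z) = b_h((e_\sigma, e_u), (\bgamma - \bgamma_h, w - w_h))$. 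Bounding $b_h$ by Cauchy--Schwarz and the discrete Poincar\'e--Friedrichs inequality \eqref{dPF} in the broken triple norm, and then taking the infimum over $(\bgamma_h, w_h)$, the approximation estimate \eqref{appp} delivers $a_h(e_u,z) \leq C h^s \tri(e_\sigma, e_u)\tri\, \|g\|_0$.

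Collecting the two bounds, using $\|g\|_0 = \|e_u\|_0$ and $\|\nabla_h e_u\|_0 \le \tri(e_\sigma, e_u)\tri$, gives $\|e_u\|_0^2 \leq C h^s \tri(e_\sigma, e_u)\tri\, \|e_u\|_0$, and dividing by $\|e_u\|_0$ finishes the proof. I expect the delicate point to be the element-by-element integration by parts that produces the consistency face sum: one must verify that only the $\jump{u_{cr}}$ terms on $\cE_{int}\cup\cE_D$ survive and that the $\Gamma_N$ contributions vanish through the adjoint Neumann condition $(A\nabla z + \bb z)\cdot\bn = 0$. This is exactly the boundary bookkeeping that the excerpt flags as incorrect in \cite{CK:06}, so it must be carried out directly rather than by appeal to a pure-Dirichlet shortcut.
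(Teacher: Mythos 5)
Your proposal is correct and follows essentially the same route as the paper's proof: a duality argument with the adjoint problem \eqref{vp2} and the auxiliary first-order system \eqref{first_order_g}, elementwise integration by parts producing the face sum handled by Lemma \ref{estjump}, the cancellation $(\bE,\nabla z)+(\gradt\bE,z)=0$ to rewrite the volume terms as $b_h((\bE,e),(\bgamma,w))$, and then the error equation \eqref{error_eq} plus the approximation property \eqref{appp} to extract the factor $h^s$. The only cosmetic difference is that you fix $g=u-u_{cr}$ at the outset, whereas the paper works with a generic $g$ and concludes by the same specialization.
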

\begin{proof}
Let $e= u-u_{cr} \in W_D^{1+cr}$ and $\bE= \bsigma-\bsigma_{rt}\in H_N(\divvr;\O)$.
Multiplying both sides of \eqref{pde2} by $e$ and integrating by parts,  we have
\begin{eqnarray*}
(e,g)&=& -(\gradt(A\nabla z+\bb z), e) + (ce, z) \\
&=& (A\nabla z+\bb z,\nabla_h e)+(cz, e) - \sum_{K\in\cT}((A\nabla z+\bb z)\cdot\bn_{\p K}, e)_{\p K}\\
&=& (A\nabla_h e, \nabla z)+(X_h e,z) + \sum_{F\in\cE_{int}\cup\cE_D}((A\nabla z+\bb z)\cdot\bn, \jump{u_{cr}})_{F}.
\end{eqnarray*}
Using the fact that $(\bE,\nabla z)+ (\gradt \bE,z)=0$, the system \eqref{first_order_g}, the error equation \eqref{error_eq}, and  the approximation property \eqref{appp}, the first two terms can be bounded by the following estimate,
\begin{eqnarray*}
(A\nabla_h e, \nabla z)+(X_h e,z)&=& (A\nabla_h e +\bE, \nabla z)+(\gradt \bE+X_h e,z)  \\
&=& (A\nabla_h e +\bE, \nabla w+A^{-1}\bgamma)+(\gradt \bE+X_h e,\gradt \bgamma+Xw)   \\
&=& b_h(\bE,e;\bgamma,w)  =  b_h(\bE,e;\bgamma-\bgamma_h,w-w_h)\\
&\leq& \tri(\bE,e)\tri \tri (\bgamma-\bgamma_h,w-w_h) \tri \leq ch^s \|g\|_0  \tri(\bE,e)\tri 
\end{eqnarray*}
Combining with Lemma \ref{estjump}, we have the result of the theorem.
\end{proof}

\subsection{Discrete existence and uniqueness based on Schatz's argument}
We first prove a discrete Garding-like inequality, then derive a priori error estimate, and show the existence and uniqueness of the discrete potential-flux div CR method by using Schatz's argument \cite{Sch:74}.
\begin{lem}\label{Garding}
The following inequality is true for $ (\btau_{rt},v_{cr})\in RT_{0,N}\times V^{cr}_D$:
\beq
C\tri (\btau_{rt}, v_{cr})\tri^2 \leq \cJ^{div}_{h}(\btau_{rt},v_{cr};0) + \|v_{cr}\|_0^2
= b_h((\btau_{rt}, v_{cr}),(\btau_{rt}, v_{cr})) + \|v_{cr}\|_0^2.
\eeq
\end{lem}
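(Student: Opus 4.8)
The plan is to follow the algebraic structure of the coercive-case proof in Section 4, but since Assumption \ref{asmp_coef} is no longer available to render the term $-(X_h v_{cr},v_{cr})$ small, I will instead bound this term crudely using only the boundedness of $\bb$ and $c$; this is precisely what forces the extra defect $\|v_{cr}\|_0^2$ onto the right-hand side. First I would establish, via the discrete integration-by-parts identity \eqref{discrete_ibp} (which makes the cross terms $(\btau_{rt},\nabla_h v_{cr})$ and $(\gradt\btau_{rt},v_{cr})$ cancel), the exact identity
\begin{align*}
\|A^{1/2}\nabla_h v_{cr}\|^2_0
&=(A^{1/2}\nabla_h v_{cr}+A^{-1/2}\btau_{rt},A^{1/2}\nabla_h v_{cr})\\
&\quad+(\gradt\btau_{rt}+X_h v_{cr},v_{cr}) -(X_h v_{cr},v_{cr}),
\end{align*}
exactly as in Section 4.

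Next I would estimate the three terms on the right. By Cauchy--Schwarz and \eqref{A}, the first is at most $\Lambda_1^{1/2}\|A^{1/2}\nabla_h v_{cr}+A^{-1/2}\btau_{rt}\|_0\,\|\nabla_h v_{cr}\|_0$ and the second at most $\|\gradt\btau_{rt}+X_h v_{cr}\|_0\,\|v_{cr}\|_0$. For the third, writing $X_h v_{cr}=\bb\cdot\nabla_h v_{cr}+cv_{cr}$ and using $\bb\in L^{\infty}(\O)^d$, $c\in L^{\infty}(\O)$, I get
$$
-(X_h v_{cr},v_{cr}) \leq \|\bb\|_{\infty}\|\nabla_h v_{cr}\|_0\|v_{cr}\|_0 + \|c\|_{\infty}\|v_{cr}\|_0^2.
$$
Inserting these and applying Young's inequality with a small parameter $\eps>0$ to every product containing $\|\nabla_h v_{cr}\|_0$, I would absorb the resulting $\eps\|\nabla_h v_{cr}\|_0^2$ contributions into the left side through $\Lambda_0\|\nabla_h v_{cr}\|_0^2 \leq \|A^{1/2}\nabla_h v_{cr}\|_0^2$; choosing $\eps$ (say $\eps=\Lambda_0/4$) small enough yields
$$
\|\nabla_h v_{cr}\|_0^2 \leq C\,\cJ^{div}_{h}(\btau_{rt},v_{cr};0)+C\,\|v_{cr}\|_0^2.
$$

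Finally, to recover the flux norms I would argue exactly as in \eqref{tmp1} and \eqref{tmp2}: the triangle inequality gives $\|\btau_{rt}\|_0 \leq C(\|A^{1/2}\nabla_h v_{cr}+A^{-1/2}\btau_{rt}\|_0 + \|\nabla_h v_{cr}\|_0)$, while $\|\gradt\btau_{rt}\|_0 \leq \|\gradt\btau_{rt}+X_h v_{cr}\|_0 + \|X_h v_{cr}\|_0 \leq C(\|\gradt\btau_{rt}+X_h v_{cr}\|_0 + \|\nabla_h v_{cr}\|_0 + \|v_{cr}\|_0)$. Summing the three bounds and rescaling the constant so that the coefficient of $\|v_{cr}\|_0^2$ becomes $1$ produces the claimed inequality. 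The only delicate point is the convection contribution $(\bb\cdot\nabla_h v_{cr},v_{cr})$, which mixes the gradient and the $L^2$ norm; Young's inequality disposes of it, and it is exactly this mixed term (no longer controllable without Assumption \ref{asmp_coef}) that accounts for the $\|v_{cr}\|_0^2$ defect here, in contrast to the full coerciveness obtained in Section 4.
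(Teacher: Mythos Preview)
Your proof is correct and follows essentially the same approach as the paper: start from the same identity for $\|A^{1/2}\nabla_h v_{cr}\|_0^2$ obtained via \eqref{discrete_ibp}, crudely bound $-(X_h v_{cr},v_{cr})$ using only the $L^\infty$ bounds on $\bb,c$, and then recover $\|\btau_{rt}\|_0$ and $\|\gradt\btau_{rt}\|_0$ via \eqref{tmp1}--\eqref{tmp2}. The only cosmetic difference is that the paper invokes the discrete Poincar\'e inequality to write every right-hand term as a multiple of $\|A^{1/2}\nabla_h v_{cr}\|_0$ and then simply divides through, whereas you use Young's inequality with a small parameter to absorb the $\eps\|\nabla_h v_{cr}\|_0^2$ pieces into the left side; the two devices are interchangeable here.
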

\begin{proof}
By \eqref{discrete_ibp} and the discrete Poincar\'{e} inequality,
\begin{eqnarray*}
\|A^{1/2}\nabla_hv_{cr}\|^2_0
&=&(A^{1/2}\nabla_hv_{cr}+A^{-1/2}\btau_{rt},A^{1/2}\nabla_hv_{cr})
+(\gradt\btau_{rt}+X_h v_{cr},v_{cr}) -(X_h v_{cr},v_{cr})\\
&\leq& C(\|A^{1/2}\nabla_hv_{cr}+A^{-1/2}\btau_{rt}\|_0
+\|\gradt\btau_{rt}+X_h v_{cr}\|_0)\|\nabla_h v_{cr}\|_0  +C\|\nabla_h v_{cr}\|_0 \|v_{cr}\|_0\\
&\leq& C(\|A^{1/2}\nabla_h v_{cr}+A^{-1/2}\btau_{rt}\|_0
+\|\gradt\btau_{rt}+X_h v_{cr}\|_0 + \|v_{cr}\|_0)\|A^{1/2}\nabla_hv_{cr}\|_0.
\end{eqnarray*}
Thus, 
$$
\|\nabla_hv_{cr}\|_0 \leq C(\|A^{1/2}\nabla_hv_{cr}+A^{-1/2}\btau_{rt}\|_0
+\|\gradt\btau_{rt}+X_hv_{cr}\|_0 + \|v_{cr}\|_0).
$$
The lemma is proved by combining the above result, \eqref{tmp1}, and \eqref{tmp2}.
\end{proof}

\begin{thm}
Assume Assumption \ref{regularity} is true.
There exists an $h_0>0$, such that when the maximum mesh size $h$ of $\cT$ is smaller than $h_0$, the discrete problem of the two-field potential-flux div CR-LSFEM \eqref{lsfem-divl2_min} or \eqref{lsfem-divl2} has a unique solution.
For piecewise constant function $s$ on mesh $\cT$ with $s_K = s|_K$ satisfying $0<s_K \leq 1$, assume that $u|_K \in H^{1+s_K}(K)$, $\bsigma|_K \in H^{s_K}(K)$, and $\gradt\bsigma|_K \in H^{s_K}(K)$,  for $K\in \cT$. The following a priori error estimate is true,
\begin{eqnarray*}
\tri (\bsigma-\bsigma_{rt}, u-u_{cr})\tri &\leq& C\inf_{(\btau_{rt},v_{rt})\in RT_{0,N}\times V^{cr}_D}\tri (\bsigma-\btau_{rt},u-v_{cr})\tri\\
&\leq &C\sum_{K\in\cT}h_K^{s_K} (|u|_{1+s_K,K}+ |\bsigma|_{s_K,K} +  |\gradt\bsigma|_{s_K,K}).
\end{eqnarray*}
\end{thm}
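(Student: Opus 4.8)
The plan is to follow Schatz's argument, exploiting that the discrete problem \eqref{cr-lsfem-divl2} is a square linear system on the finite-dimensional space $RT_{0,N}\times V^{cr}_D$, so that discrete uniqueness automatically gives discrete existence; the quasi-optimal bound then comes from Galerkin orthogonality together with a duality-plus-absorption step. First I would settle uniqueness. Take $f=0$; by the minimal assumption (one of the equivalent conditions of Theorem \ref{assp}) the exact solution is $(\bsigma,u)=(0,0)$, and any homogeneous discrete solution $(\bsigma_{rt},u_{cr})$ then satisfies the error equation \eqref{error_eq} against it. Testing \eqref{cr-lsfem-divl2} against itself gives $\cJ^{div}_{h}(\bsigma_{rt},u_{cr};0)=0$, so Lemma \ref{Garding} yields $C\tri(\bsigma_{rt},u_{cr})\tri^2\le\|u_{cr}\|_0^2$. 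Since the exact solution is zero, the $L^2$-estimate \eqref{L2} applies and gives $\|u_{cr}\|_0\le Ch^s\tri(\bsigma_{rt},u_{cr})\tri$, whence $C\tri(\bsigma_{rt},u_{cr})\tri^2\le Ch^{2s}\tri(\bsigma_{rt},u_{cr})\tri^2$. Choosing $h_0$ so that the factor $Ch^{2s}$ stays strictly below the G\aa rding constant forces $\tri(\bsigma_{rt},u_{cr})\tri=0$ for $h<h_0$, hence $(\bsigma_{rt},u_{cr})=(0,0)$; uniqueness, and therefore existence, follows.

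For the a priori estimate I would run the same machinery on the genuine error $\bE=\bsigma-\bsigma_{rt}$, $e=u-u_{cr}$. Lemma \ref{Garding} gives $C\tri(\bE,e)\tri^2\le\cJ^{div}_{h}(\bE,e;0)+\|e\|_0^2=b_h((\bE,e),(\bE,e))+\|e\|_0^2$. The Galerkin orthogonality \eqref{error_eq} lets me replace the second argument of $b_h$ by $(\bsigma-\btau_{rt},u-v_{cr})$ for any discrete pair, since the discrete remainder is annihilated. Writing $b_h$ as a sum of two $L^2$ inner products of residual vectors, Cauchy--Schwarz gives $b_h((\bE,e),(\bsigma-\btau_{rt},u-v_{cr}))\le \cJ^{div}_{h}(\bE,e;0)^{1/2}\,\cJ^{div}_{h}(\bsigma-\btau_{rt},u-v_{cr};0)^{1/2}$, and the easy upper bound of the triple norm controls each factor. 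Bounding $\|e\|_0^2\le Ch^{2s}\tri(\bE,e)\tri^2$ via \eqref{L2} and absorbing it for $h<h_0$, then dividing by $\tri(\bE,e)\tri$ and taking the infimum over discrete pairs, I obtain the quasi-optimality bound $\tri(\bE,e)\tri\le C\inf_{(\btau_{rt},v_{cr})}\tri(\bsigma-\btau_{rt},u-v_{cr})\tri$, which is the first inequality claimed.

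The explicit rate is the final, routine step: inserting the canonical Raviart--Thomas and Crouzeix--Raviart interpolants into the infimum and invoking the local approximation estimates \eqref{rti}, \eqref{divrti}, and \eqref{localcr} together with the piecewise regularity $u|_K\in H^{1+s_K}(K)$, $\bsigma|_K\in H^{s_K}(K)$, $\gradt\bsigma|_K\in H^{s_K}(K)$ bounds it by $C\sum_{K}h_K^{s_K}(|u|_{1+s_K,K}+|\bsigma|_{s_K,K}+|\gradt\bsigma|_{s_K,K})$. I expect the two absorption steps to be the crux: both rely on the $L^2$-estimate \eqref{L2} supplying the factor $h^{2s}$, so the threshold $h_0$ must be fixed globally, strictly below the level at which $Ch^{2s}$ overtakes the G\aa rding constant of Lemma \ref{Garding}. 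This is precisely where the global regularity Assumption \ref{regularity} and the smallness of the mesh enter; everything else reduces to the triple-norm boundedness of $b_h$ and standard interpolation.
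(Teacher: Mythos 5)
Your uniqueness/existence argument is correct and legitimately uses the paper's tools: for $f=0$ the candidate $(\bsigma_{rt},u_{cr})$ is itself a pair in $RT_{0,N}\times V^{cr}_D$, so Lemma \ref{Garding} applies, testing the discrete equation with itself gives $\cJ^{div}_{h}(\bsigma_{rt},u_{cr};0)=0$, and \eqref{L2} (with exact solution zero) lets you absorb $\|u_{cr}\|_0^2$ for $h<h_0$; existence then follows since the system is square and finite-dimensional. The paper reaches the same conclusion as a corollary of the a priori bound, so the ordering is a harmless difference.

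The gap is in your derivation of the quasi-optimal estimate: you invoke Lemma \ref{Garding} for the genuine error $(\bE,e)=(\bsigma-\bsigma_{rt},u-u_{cr})$, but this pair lies in $H_N(\divvr;\O)\times W_D^{1+cr}$, not in $RT_{0,N}\times V^{cr}_D$, and the lemma is stated and proved only on the discrete product space. Its proof rests on the discrete integration-by-parts identity \eqref{discrete_ibp}, which needs $\btau\cdot\bn_F$ to be face-wise constant (lowest-order RT) and $\jump{v}$ to have zero mean on each face (CR); for a general $\btau\in H_N(\divvr;\O)$ paired with $e\in W_D^{1+cr}$ the face contributions $\sum_{F}\langle\btau\cdot\bn,\jump{e}\rangle_F$ do not vanish and cannot be absorbed uniformly in $h$. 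This is not merely a missing justification: Lemma \ref{counter_ex} shows the div functional does not control $\tri(\cdot,\cdot)\tri$ on the sum space, and a scaling check on that same single-basis-function counterexample (where $\|e\|_0$ is of order $h^{s}$ times the nonconforming part of the broken gradient, under Assumption \ref{regularity}) shows that adding the compact term $\|e\|_0^2$ does not restore an $h$-uniform G\aa rding-type bound there. The repair is exactly the paper's detour, which your argument skips: apply Lemma \ref{Garding} to the discrete difference $(\bE_h,e_h)=(\bsigma_{rt}-\btau_{rt},u_{cr}-v_{cr})$, use the error equation \eqref{error_eq} to replace one argument by $(\bsigma-\btau_{rt},u-v_{cr})$, bound via continuity and Young's inequality, and only then pass to $(\bE,e)$ by the triangle inequality before invoking \eqref{L2} and absorbing $h^{s}\tri(\bE,e)\tri$ for $h<h_0$. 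Your final step, inserting the RT and CR interpolants with \eqref{rti}, \eqref{divrti}, \eqref{localcr}, is fine as written.
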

\begin{proof}
Let $v_{cr}$ be an arbitrary function in $V^{cr}_D$ and $\btau_{cr}$ be an arbitrary function in $RT_{0,N}$. Denote $e_h =  u_{cr}-v_{cr}$ and $\bE_h = \bsigma_{cr}-\btau_{cr}$. By the result of Lemma \ref{Garding}, the error equation \eqref{error_eq}, we have
\begin{eqnarray*}
\tri (\bE_h, e_h) \tri^2 &\leq & C(b_h(\bE_h, e_h;\bE_h, e_h) +\|e_h\|^2_0)\\
&\leq & C(b_h(\bsigma-\btau_{rt}, u-v_{cr};\bE_h, e_h) +\|u-v_{cr}\|^2_0 + \|e\|_0^2)\\
&\leq&C(\tri(\bsigma-\btau_{rt}, u-v_{cr})\tri  \tri(\bE_h, e_h)\tri + \tri(\bsigma-\btau_{rt}, u-v_{cr})\tri ^2 +\|e\|^2_0)
\end{eqnarray*}
An application of  Young's inequality with $\epsilon$ shows
\beq \label{Eheh}
C\tri (\bE_h, e_h) \tri \leq \tri(\bsigma-\btau_{rt}, u-v_{cr})\tri  +\|e\|_0.
\eeq
Then by the triangle inequality, \eqref{Eheh}, and the $L^2$-error estimates \eqref{L2},
\begin{eqnarray*}
\tri(\bE,e)\tri &\leq& \tri(\bE_h,e_h)\tri +\tri(\bsigma-\btau_{cr},u-v_{cr})\tri
\leq C(\tri(\bsigma-\btau_{cr},u-v_{cr})\tri +\|e\|_0)\\
&\leq&C(\tri(\bsigma-\btau_{cr},u-v_{cr})\tri + h^s\tri(\bE,e)\tri).
\end{eqnarray*}
Choosing $h_0$ small enough, we have 
$$
\tri (\bsigma-\bsigma_{rt}, u-u_{cr})\tri  \leq  C\inf_{(\btau_{rt},v_{rt})\in RT_{0,N}\times V^{cr}_D}\tri (\bsigma-\btau_{rt},u-v_{cr})\tri.
$$
The other part of the a priori error analysis is from the local approximation properties \eqref{rti}, \eqref{divrti}, and \eqref{localcr}. 
The existence and uniqueness of the discrete problem are then a simple consequence of the a priori error estimate.
\end{proof}

\section{Discrete Existence and Uniqueness of the Two-Field Div CR-LSFEM: III. Inf-Sup Condition of A Standard CR Method}
\setcounter{equation}{0}
In this section, we present a third proof of the coerciveness of the two-field div CR-LSFEM by using the stability of the standard CR method for the second-order linear elliptic equation \cite{CDNP:16}.
Define the discrete bilinear form $a_h$ corresponding to \eqref{a},
\beq \label{ah}
a_h(w,v) := (A\nabla_h w,\nabla_h v) +(X_h w,v)\quad \mbox{for } w, v\in W^{1+cr}_D.
\eeq
Then the problem of the standard CR element approximation to  \eqref{pde1} is: Find $u_{cr}^p\in V^{cr}_D$, such that
\beq\label{CRp}
	a_h(u_{cr}^p,v_{cr}) = (f,v_{cr}) \quad \forall v_{cr} \in  V^{cr}_D.
\eeq
And the problem of the standard CR element approximation to the adjoint equation \eqref{pde2} is: Find $u_{cr}^d\in V^{cr}_D$, such that
\beq \label{CRd}
	a_h(v_{cr}, u^d_{cr}) = (f,v_{cr}) \quad \forall v_{cr} \in  V^{cr}_D.
\eeq
In \cite{CDNP:16}, for the problem \eqref{CRd} with $\Gamma_D = \p\O$ and a more general righthand side, the following CR approximation of the adjoint problem is considered: For all $f_0\in L^2(\O)$ and $\bff_1\in L^2(\O)^d$, find $u_{cr}^d\in V^{cr}_D$, such that   
\beq \label{cr_g}
	a_h(v_{cr}, u^d_{cr}) = (f_0,v_{cr}) + (\bff_1, \nabla_h v_{cr}) \quad \forall v_{cr} \in  V^{cr}_D.
\eeq
It is proved that \eqref{cr_g} has a unique solution if the mesh size is small enough and Assumption \ref{regularityeq} is true. The following stability result is also proved:
\beq \label{cr_stab}
\beta\|\nabla_h u^d_{cr}\|_0 \leq \|f_0\|_0 +\|\bff_1\|_0 \quad \forall f_0\in L^2(\O)\quad \bff_1\in L^2(\O)^d.
\eeq
It is easy to derive that the above result still holds for mixed boundary conditions with $\Gamma_D\neq \emptyset$.
By the \BNB theory \cite{Babuska:71,Braess:07,XZ:03}, it is known that the well-posedness of  \eqref{cr_g} is equivalent to the inf-sup stability of $a_h$:
\beq \label{infsup}
0<\beta = \inf_{v_{cr}\in V^{cr}_D}\sup_{w_{cr}\in V^{cr}_D} \dfrac{a_h(v_{cr},w_{cr})}{\|\nabla_h v_{cr}\|_0\|\nabla_h w_{cr}\|_0} = \inf_{w_{cr}\in V^{cr}_D}\sup_{v_{cr}\in V^{cr}_D} \dfrac{a_h(v_{cr},w_{cr})}{\|\nabla_h v_{cr}\|_0\|\nabla_h w_{cr}\|_0}. 
\eeq
The inf-sup condition \eqref{infsup} is also equivalent to
\beq\label{is}
\beta \|\nabla_h v_h\|_0 \leq \sup_{w_h\in V^{cr}_D} \dfrac{a_h(v_h,w_h)}{\|\nabla_h w_h\|_0}
\quad\mbox{and}\quad
\beta \|\nabla_h v_h\|_0 \leq \sup_{w_h\in V^{cr}_D} \dfrac{a_h(w_h,v_h)}{\|\nabla_h w_h\|_0}
\quad \forall v_h\in V^{cr}_D.
\eeq
It is then obvious that once we have \eqref{infsup} or \eqref{is}, we also have the well-posedness of the equation \eqref{CRp}.

With the help of the inf-sup condition of $a_h(\cdot,\cdot)$ \eqref{is}, we can prove the coerciveness following the argument in \cite{Zhang:22}. 
\begin{thm}
Assume Assumption \ref{regularity} is true.
There exists an $h_0>0$, such that when the maximum mesh size $h$ of $\cT$ is smaller than $h_0$, the following inequality is true for $ (\btau_{rt},v_{cr})\in RT_{0,N}\times V^{cr}_D$:
\beq
C\tri (\btau_{rt}, v_{cr})\tri^2 \leq \cJ^{div}_{h}(\btau_{rt},v_{cr};0).
\eeq
\end{thm}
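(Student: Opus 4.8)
The plan is to follow the argument of \cite{Zhang:22}: the two triangle-inequality bounds \eqref{tmp1} and \eqref{tmp2} already reduce the problem to controlling $\|\nabla_h v_{cr}\|_0$ by the least-squares functional, and that single estimate is precisely where the inf-sup condition \eqref{is} of the standard CR method enters. Throughout I would fix $h_0$ small enough that \eqref{is} holds; this is the only place the small--mesh-size and the regularity Assumption \ref{regularity} are used, so the restriction on the mesh is inherited entirely from the stability result of \cite{CDNP:16}.

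First I would abbreviate the two residuals $r_1 := A^{1/2}\nabla_h v_{cr}+A^{-1/2}\btau_{rt}$ and $r_2 := \gradt\btau_{rt}+X_h v_{cr}$, so that $\cJ^{div}_{h}(\btau_{rt},v_{cr};0)=\|r_1\|_0^2+\|r_2\|_0^2$. Solving these two relations for the PDE quantities gives $A\nabla_h v_{cr}=A^{1/2}r_1-\btau_{rt}$ and $X_h v_{cr}=r_2-\gradt\btau_{rt}$. Substituting both into the discrete bilinear form evaluated at an arbitrary test function $w_{cr}\in V^{cr}_D$,
\begin{align*}
a_h(v_{cr},w_{cr})&=(A\nabla_h v_{cr},\nabla_h w_{cr})+(X_h v_{cr},w_{cr})\\
&=(A^{1/2}r_1,\nabla_h w_{cr})+(r_2,w_{cr})-\big[(\btau_{rt},\nabla_h w_{cr})+(\gradt\btau_{rt},w_{cr})\big].
\end{align*}
The bracketed term vanishes identically by the discrete integration by parts \eqref{discrete_ibp} (applied with $w_{cr}$ in the role of $v_{cr}$), so that $a_h(v_{cr},w_{cr})=(A^{1/2}r_1,\nabla_h w_{cr})+(r_2,w_{cr})$. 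This cancellation is the crux of the argument: it converts the bilinear form into a pure pairing of the two residuals against $w_{cr}$, with the flux $\btau_{rt}$ eliminated.

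Next I would bound this pairing by Cauchy--Schwarz, using $\|A^{1/2}r_1\|_0\le\Lambda_1^{1/2}\|r_1\|_0$ and the discrete Poincar\'e--Friedrichs inequality \eqref{dPF} to absorb $\|w_{cr}\|_0\le C\|\nabla_h w_{cr}\|_0$, which gives $|a_h(v_{cr},w_{cr})|\le C(\|r_1\|_0+\|r_2\|_0)\|\nabla_h w_{cr}\|_0$. Dividing by $\|\nabla_h w_{cr}\|_0$, taking the supremum over $w_{cr}\in V^{cr}_D$, and invoking the inf-sup bound \eqref{is} then yields $\beta\|\nabla_h v_{cr}\|_0\le C(\|r_1\|_0+\|r_2\|_0)\le C\,\cJ^{div}_{h}(\btau_{rt},v_{cr};0)^{1/2}$. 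Finally, feeding this into \eqref{tmp1} and \eqref{tmp2} controls $\|\btau_{rt}\|_0$ and $\|\gradt\btau_{rt}\|_0$ by the same quantity, and summing the three contributions produces the claimed lower bound on $\cJ^{div}_{h}$ in terms of $\tri(\btau_{rt},v_{cr})\tri^2$.

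I expect the only genuine obstacle to be the validity of the inf-sup condition \eqref{is} itself, which is not elementary and is imported wholesale from \cite{CDNP:16}; the remaining steps are the residual-substitution computation, the discrete-integration-by-parts cancellation, and routine Cauchy--Schwarz estimates. A minor point to watch is that the constants coming from \eqref{tmp1}--\eqref{tmp2} and from \eqref{dPF} are mesh-independent, which they are, so the resulting coercivity constant $C$ does not degenerate as $h\to 0$.
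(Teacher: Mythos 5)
Your proposal is correct and follows essentially the same route as the paper's own proof: the identity $a_h(v_{cr},w_{cr})=(A\nabla_h v_{cr}+\btau_{rt},\nabla_h w_{cr})+(\gradt\btau_{rt}+X_h v_{cr},w_{cr})$ obtained from the discrete integration by parts \eqref{discrete_ibp}, followed by Cauchy--Schwarz, the discrete Poincar\'e inequality, the inf-sup condition \eqref{is} imported from \cite{CDNP:16}, and the triangle-inequality bounds \eqref{tmp1}--\eqref{tmp2}. Your residual-substitution presentation is just a rewriting of the same cancellation, so there is nothing substantive to distinguish the two arguments.
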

\begin{proof}
By the discrete integration by parts \eqref{discrete_ibp}, for  $\btau_{rt} \in RT_{0,N}$ and $v_{cr}$ and $w_{cr}$ in $V_D^{cr}$, we have
\begin{eqnarray} \label{vww}
a_h(v_{cr},w_{cr}) &=&  =  (A\nabla_h v_{cr}+\btau_{rt}, \nabla_h w_{cr})+(\gradt\btau_{rt}+X_hv_{cr},w_{cr}).
\end{eqnarray}
It follows from \eqref{is}, \eqref{vww}, the Cauchy-Schwarz and discrete Poincar\'e inequalities, and the assumption on $A$, for any $ (\btau_{rt},v_{cr})\in RT_{0,N}\times V^{cr}_D$,
\begin{eqnarray*}
\beta \|\nabla_h v_{cr}\|_0 &\leq & \sup_{w_{cr}\in V^{cr}_D} \dfrac{a_h(v_{cr},w_{cr})}{\|\nabla_h w_{cr}\|_0} = \sup_{w_{cr}\in V^{cr}_D} \dfrac{(A\nabla_h v_{cr}+\btau_{rt}, \nabla_h w_{cr})+(\gradt\btau_{rt}+X_hv_{cr},w_{cr})}{\|\nabla_h w_{cr}\|_0} \\
 &\leq& \sup_{w_{cr}\in V^{cr}_D} \dfrac{\|A^{1/2}\nabla_h v_{cr}+A^{-1/2}\btau_{rt}\|_0\|A^{1/2}\nabla_h w_{cr}\|_0+\|\gradt\btau_{rt}+X_hv_{cr}\|_0\|w_{cr}\|_0}{\|\nabla_h w_{cr}\|_0}\\
& \leq&  C(\|A^{1/2}\nabla_h v_{cr}+A^{-1/2}\btau_{rt}\|_0 + \|\gradt\btau_{rt}+X_hv_{cr}\|_0).
\end{eqnarray*}
The theorem is proved by combining the above result, \eqref{tmp1}, and \eqref{tmp2}.
\end{proof}

\begin{rem}
We present three proofs of the existence and uniqueness of the two-field potential-flux div CR-LSFEM \eqref{lsfem-divl2}. The first proof is based on Assumption \ref{asmp_coef}, ensuring the coerciveness of the original variational problem. The restriction on the mesh size in the first proof is local and explicit. The regularity assumption is not needed for the first proof. However,  the first proof can not be applied to the indefinite problems. On the other hand, the second and the third proofs are based on Assumption \ref{regularity}. It can be applied to more general indefinite cases once the uniqueness and a minimal regularity are assumed. Another restriction of the second and third proofs is that the regularity assumption is global, so the global mesh size $h$ instead of the local mesh size is small enough is needed.
\end{rem}

\section{A Posteriori Estimates of Two-Field Div CR-LSFEM}
\setcounter{equation}{0}

\subsection{A negative result on norm equivalence}
First, we present a negative result on the norm equivalence of the least-squares functional $\cJ^{div}_h$ and the $\tri(\btau,v)\tri$-norm for $v\in W_D^{1+cr}$ and $\btau \in H(\divvr;\O)$. For simplicity, we only discuss the simple case that $A=I$, $\bb = \bzero$, and $c=0$.

\begin{lem}\label{counter_ex}
The following inequality is not true,
\beq
C \tri (\btau, v) \tri ^2  \leq \|\nabla_h v+ \btau\|_0^2 + \|\gradt \btau\|_0^2 \quad \forall (\btau, v)\in H(\divvr;\O)\times W^{1+cr}_D.
\eeq
\end{lem}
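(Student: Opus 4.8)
The plan is to refute the claimed uniform lower bound by exhibiting a \emph{single} pair $(\btau,v)\in H(\divvr;\O)\times W^{1+cr}_D$ for which the right-hand side functional vanishes while $\tri(\btau,v)\tri^2>0$; this alone shows that no constant $C>0$ can make the inequality hold for all such pairs. The starting observation is that $\|\nabla_h v+\btau\|_0^2+\|\gradt\btau\|_0^2=0$ exactly when $\btau=-\nabla_h v$ and $\gradt\btau=0$, i.e. when the broken gradient $\nabla_h v$ is itself a divergence-free $H(\divvr;\O)$ field. In the original setting $H_N(\divvr;\O)\times H^1_D(\O)$ of the norm-equivalence theorem this forces $-\nabla v\in H_N(\divvr;\O)$ to be harmonic with $v=0$ on $\Gamma_D$ and $\nabla v\cdot\bn=0$ on $\Gamma_N$, hence $v=0$ by uniqueness — which is precisely why norm equivalence holds there. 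Enlarging the trial space to $H(\divvr;\O)\times W^{1+cr}_D$, by contrast, creates nonzero kernel elements, and the key step is to produce a genuinely nonconforming $v\in V^{cr}_D\subset W^{1+cr}_D$ whose (piecewise constant) broken gradient has continuous normal components across every interior face, and then to set $\btau:=-\nabla_h v$.

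Concretely, I would take the unit square split into two triangles $K_1,K_2$ by the diagonal from $(0,0)$ to $(1,1)$, with $\Gamma_D$ the bottom edge and $\Gamma_N$ the remaining three edges, and define $v|_{K_1}=y$ and $v|_{K_2}=x+2y-1$. A direct check then gives three things: the mean of the jump of $v$ along the diagonal vanishes and $\int_{\Gamma_D}v\,ds=0$, so $v\in V^{cr}_D$; the broken gradient equals $(0,1)$ on $K_1$ and $(1,2)$ on $K_2$, whose components along the diagonal normal $(1,-1)/\sqrt2$ coincide, so $\nabla_h v\in H(\divvr;\O)$ with $\gradt(\nabla_h v)=0$; and the pointwise jump of $v$ across the diagonal is not identically zero, so $v\notin H^1(\O)$ is genuinely nonconforming, consistently with the nonzero tangential jump of $\nabla_h v$ predicted by \eqref{jj1}. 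Setting $\btau=-\nabla_h v\in H(\divvr;\O)$ then yields $\|\nabla_h v+\btau\|_0^2+\|\gradt\btau\|_0^2=0$, whereas $\tri(\btau,v)\tri^2=2\|\nabla_h v\|_0^2=6>0$, so the asserted inequality cannot hold for any $C>0$.

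The only real obstacle is arranging the \emph{compatibility} of the three requirements on the candidate $v$: its broken gradient must be normal-continuous across interior faces (so that $\btau=-\nabla_h v$ lands in $H(\divvr;\O)$ with zero divergence), it must retain a nonzero tangential jump (so that it is nonconforming and $\tri(\btau,v)\tri\neq0$), and it must satisfy the mean-value and Dirichlet constraints defining $V^{cr}_D$. On the two-triangle configuration these reduce to a small, explicitly solvable linear system in the coefficients of the two affine pieces, as the construction above exhibits; any solution with nonzero broken gradient immediately defeats the proposed inequality and proves the lemma.
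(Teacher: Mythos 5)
Your proposal is correct: exhibiting one pair with vanishing right-hand side and positive triple norm defeats the inequality for every $C>0$, and your explicit pair checks out. On the two-triangle mesh with the diagonal from $(0,0)$ to $(1,1)$, $v|_{K_1}=y$, $v|_{K_2}=x+2y-1$, the jump $2x-1$ on the diagonal has zero mean and $v$ vanishes on the bottom edge, so $v\in V^{cr}_D\subset W^{1+cr}_D$; the broken gradients $(0,1)$ and $(1,2)$ have equal normal components $-1/\sqrt{2}$ across the diagonal, so $\btau=-\nabla_h v\in H(\divvr;\O)$ with $\gradt\btau=0$, and indeed $\tri(\btau,v)\tri^2=2\|\nabla_h v\|_0^2=6>0$ while the functional is zero.

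Your route is genuinely different from the paper's. The paper starts from an arbitrary nonzero nonconforming $w_{cr}\in V^{cr}_D$, applies the Helmholtz decomposition $\nabla_h w_{cr}=\nabla\alpha+\gperp\beta$ with $\alpha\in H^1_D(\O)$, $\beta\in H^1_N(\O)$, notes $\gperp\beta\neq 0$ because $w_{cr}\notin H^1_D(\O)$, and then takes $v=w_{cr}-\alpha\in W^{1+cr}_D$ and $\btau=\gperp\beta$, which is automatically divergence-free. That argument works on any mesh and for any nonconforming CR function, and it pinpoints the uncontrolled component of the least-squares functional as exactly the curl part of the Helmholtz decomposition — a structural fact the paper reuses to motivate the extra nonconformity terms in the a posteriori estimators and the three-field div-curl formulation. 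Your construction instead solves the compatibility conditions (normal continuity of the broken gradient, nonzero tangential jump, CR mean-value and Dirichlet constraints) explicitly on a two-element mesh, which buys an elementary, fully computable counterexample whose potential component even lies in $V^{cr}_D$ itself and whose flux is piecewise constant; what it gives up is the mesh-independent generality and the direct link to the Helmholtz decomposition that organizes the rest of the paper. Both proofs are complete and correct.
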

\begin{proof}
A counterexample in the two-dimensional case is presented. Let $0\neq w_{cr} \in V^{cr}_D$, but $w_{cr}  \not\in H_D^1(\O)$. For example, choose $w_{cr}(m_F) =1$, where $m_F$ is the mid-point of an interior edge of a finite element mesh and let all other degrees of freedom of $w_{cr}$ be zero.
We have the following Helmholtz decomposition, 
\beq \label{hel1}
 \nabla_h w_{cr} = \nabla \alpha + \gperp \beta \mbox{  and }
\|\nabla_h w_{cr}\|_0^2 = \|\nabla \alpha\|_0^2 + \|\gperp \beta\|_0^2,
\eeq
with $\a\in H^1_D(\O)$ and $\beta\in H^1_N(\O)$. Since $0\neq w_{cr} \in V^{cr}_D$, but $w_{cr}  \not\in H_D^1(\O)$, we get $\gperp \beta \neq 0$.

Let $v = -\alpha +w_{cr} \in W^{1+cr}_D$ and 
$\btau =  \nabla_h w_{cr} - \nabla\alpha = -\nabla_h v = \gperp \beta$. Then $\gradt\btau = \gradt(\gperp \beta) =0$. Thus, we get 
$$
\|\nabla_h v+ \btau\|_0^2 + \|\gradt \btau\|_0^2 =0
\quad \mbox{but}\quad 
\tri (\btau, v) \tri ^2 =
\|\nabla_h v\|_0^2+ \|\btau\|_0^2+  \|\gradt \btau\|_0^2= 2\|\gperp \beta\|_0^2 >0.
$$
We get a contradiction.
\end{proof}

\subsection{A posteriori error estimates}

Lemma \ref{counter_ex} provides a counterexample that
$
\cJ^{div}_{h}(\bsigma_{rt},u_{cr};f)
$ itself cannot be used as a reliable a posteriori error estimator. From the proof of Lemma \ref{counter_ex}, we need to design an error estimator by adding the missing part.

Let $E_h$ be the enriching operator defined in \eqref{enrich}.
Define
\beq
u_c = E_h u_{cr} \in S_{2,D}.
\eeq
We introduce the following six a posteriori error estimators:
\begin{eqnarray*}\label{est1}
\eta_1^2 := \|A^{1/2} \nabla_h(u_{cr} - u_c)\|_0^2 + \cJ^{div}_{h}(\bsigma_{rt},u_{c};f),
&\quad &
\eta_2^2 :=  \|A^{1/2} \nabla_h(u_{cr} - u_c)\|_0^2 + \cJ^{div}_{h}(\bsigma_{rt},u_{cr};f),\\[1mm]
\label{est3}
\eta_3^2 :=\sum_{F\in\cE} \dfrac{1}{h_F} \|\jump{u_{cr}}\|_{0,F}^2
 + \cJ^{div}_{h}(\bsigma_{rt},u_{c};f),
&\quad& 
\eta_4^2 :=\sum_{F\in\cE} \dfrac{1}{h_F} \|\jump{u_{cr}}\|_{0,F}^2
 + \cJ^{div}_{h}(\bsigma_{rt},u_{cr};f),\\[1mm]
 \label{est5}
\eta_5^2 :=\sum_{F\in\cE} {h_F} \|\gamma_{t_F}(u_{cr})\|_{0,F}^2
 + \cJ^{div}_{h}(\bsigma_{rt},u_{c};f),
&\quad&
\eta_6^2 :=\sum_{F\in\cE} {h_F}\|\gamma_{t_F}(u_{cr})\|_{0,F}^2
 + \cJ^{div}_{h}(\bsigma_{rt},u_{cr};f).
\end{eqnarray*}

We have the following reliability and efficiency results.
\begin{thm}
Assume one of the conditions of Theorem \ref{assp} is true.
Let $(\bsigma_{rt},u_{cr})\in RT_{0,N}\times V^{cr}_D$ be the finite element solution to the problem \eqref{cr-lsfem-divl2}.
For $i=1,\cdots,6$, there exist positive constants $C_1$ and $C_2$ such that the following inequalities hold:
\beq
C_1\eta_i \leq \tri (\bsigma-\bsigma_{rt}, u-u_{cr}) \tri
\leq C_2 \eta_i.
\eeq
\end{thm}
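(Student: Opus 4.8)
The plan is to reduce all six estimators to a single canonical one and then exploit the fact that the enriched function $u_c=E_h u_{cr}$ is conforming, so that the \emph{standard} norm equivalence \eqref{norm-equ-standard} becomes available in spite of the failure recorded in Lemma \ref{counter_ex}.

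First I would show that the six estimators are mutually equivalent, so that it suffices to treat one of them, say $\eta_1$. The three ``nonconforming'' terms are comparable: by \eqref{esE2} one has $\|\nabla_h(u_{cr}-u_c)\|_0^2\le C\sum_{F\in\cE}h_F^{-1}\|\jump{u_{cr}}\|_{0,F}^2$; the reverse bound follows from \eqref{esE4} with the conforming choice $v=u_c$; and the equivalence with $\sum_{F\in\cE}h_F\|\jump{\gamma_{t_F}(\nabla u_{cr})}\|_{0,F}^2$ is exactly \eqref{jj1}. Write $S^2$ for any of these equivalent quantities. The two choices of functional differ only through $u_c$ versus $u_{cr}$: since $(\btau,v)\mapsto\cJ^{div}_h(\btau,v;f)^{1/2}$ is the $L^2$-norm of an affine image of $(\btau,v)$ and hence respects the triangle inequality, fixing $\btau=\bsigma_{rt}$ gives $|\cJ^{div}_h(\bsigma_{rt},u_{cr};f)^{1/2}-\cJ^{div}_h(\bsigma_{rt},u_c;f)^{1/2}|\le(\|A^{1/2}\nabla_h(u_{cr}-u_c)\|_0^2+\|X_h(u_{cr}-u_c)\|_0^2)^{1/2}$. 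The gradient part is $\le CS$, while \eqref{esE1} yields $\|u_{cr}-u_c\|_0\le ChS$, whence $\|X_h(u_{cr}-u_c)\|_0\le CS$. Thus the two functionals differ by at most $CS^2$, which is absorbed by the added term; together with the equivalence of the three nonconforming terms this shows $C\eta_i\le\eta_j\le C'\eta_i$ for all $i,j$.

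It then remains to prove reliability and efficiency for $\eta_1$. The key identity is that, because $(\bsigma,u)$ satisfies the first-order system \eqref{1os} in $L^2$ and $u_c\in H^1_D(\O)$ is conforming (so $\nabla_h u_c=\nabla u_c$ and $X_h u_c=Xu_c$),
\[
\cJ^{div}_h(\bsigma_{rt},u_c;f)=\|A^{1/2}\nabla(u-u_c)+A^{-1/2}(\bsigma-\bsigma_{rt})\|_0^2+\|\gradt(\bsigma-\bsigma_{rt})+X(u-u_c)\|_0^2=\cJ^{div}_h(\bsigma-\bsigma_{rt},u-u_c;0).
\]
Since the pair $(\bsigma-\bsigma_{rt},u-u_c)$ lies in $H_N(\divvr;\O)\times H^1_D(\O)$, the standard equivalence \eqref{norm-equ-standard} (valid under any condition of Theorem \ref{assp}) applies and gives $\cJ^{div}_h(\bsigma_{rt},u_c;f)$ equivalent to $\tri(\bsigma-\bsigma_{rt},u-u_c)\tri^2$. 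A triangle inequality in the first component, $\big|\,\|\nabla_h(u-u_{cr})\|_0-\|\nabla(u-u_c)\|_0\,\big|\le\|\nabla_h(u_{cr}-u_c)\|_0\le CS$, shows that $\tri(\bsigma-\bsigma_{rt},u-u_{cr})\tri$ and $\tri(\bsigma-\bsigma_{rt},u-u_c)\tri$ differ only by the $S$-term already present in $\eta_1$. This immediately yields reliability; for efficiency I additionally invoke \eqref{jumperror}, namely $S^2\le C\|\nabla_h(u-u_{cr})\|_0^2\le C\tri(\bsigma-\bsigma_{rt},u-u_{cr})\tri^2$, to bound the added term by the true error.

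The main obstacle is the passage between the genuinely nonconforming functional and the conforming one: Lemma \ref{counter_ex} shows $\cJ^{div}_h(\bsigma_{rt},u_{cr};f)$ can be far smaller than the error, so the repair through $E_h$ is essential, and the identity above—expressing $\cJ^{div}_h(\bsigma_{rt},u_c;f)$ as the homogeneous least-squares functional of the \emph{conforming} error pair—is precisely what unlocks \eqref{norm-equ-standard}. The only genuinely technical point is the careful treatment of the lower-order operator $X_h$ when swapping $u_{cr}$ for $u_c$, which relies on the $L^2$-bound $\|u_{cr}-u_c\|_0\le ChS$ from \eqref{esE1}.
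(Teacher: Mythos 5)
Your proof is correct and follows essentially the same route as the paper: the enrichment $u_c=E_hu_{cr}$, the identity $\cJ^{div}_h(\bsigma_{rt},u_c;f)=\cJ^{div}_h(\bsigma-\bsigma_{rt},u-u_c;0)$ combined with the conforming norm equivalence \eqref{norm-equ-standard}, and the auxiliary bounds \eqref{esE1}--\eqref{esE4}, \eqref{jumperror}, \eqref{jj1}. The only (harmless) differences are that you establish the mutual equivalence of the six estimators first rather than after treating $\eta_1$, and you handle the lower-order term $X_h(u_{cr}-u_c)$ a bit more explicitly than the paper does when swapping $u_{cr}$ for $u_c$ in the functional.
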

\begin{proof}
Due to the norm equivalence \eqref{norm-equ-standard} on $H_N(\divvr;\O)\times H_D^1(\O)$ and the fact that $u_c = E_h u_{cr} \in S_{2,D}\subset H^1_D(\O)$, we have
$$
C_1 \tri (\bsigma-\bsigma_{rt},u-u_c) \tri^2 \leq \cJ^{div}_{h}(\bsigma_h,u_c;f) = \cJ^{div}_{h}(\bsigma-\bsigma_{rt},u-u_c;0)  \leq C_2 \tri (\bsigma-\bsigma_{rt},u-u_c) \tri^2. 
$$
Then by the triangle inequality,
$$
\tri (\bsigma-\bsigma_{rt},u-u_{cr}) \tri^2
\leq  \tri (\bsigma-\bsigma_{rt},u-u_{c}) \tri^2 + C\|A^{1/2} \nabla_h(u_{cr} - u_c)\|_0^2 \leq C\eta_1^2.
$$
The reliability result of $\eta_1$  is proved.

From \eqref{esE2} and \eqref{esE4}, we have
\begin{eqnarray} \label{est111}
C\|\nabla_h(u_{cr}-u_c)\|_0^2 & \leq& \sum_{F\in\cE} \dfrac{1}{h_F} \|\jump{u_{cr}}\|_{0,F}^2\leq C \|\nabla_h(u_{cr}-u_c)\|_0^2,\\ \label{est222}
\sum_{F\in\cE} \dfrac{1}{h_F} \|\jump{u_{cr}}\|_{0,F}^2&\leq& C \|\nabla_h(u-u_{cr})\|_0^2,
\end{eqnarray}
With the  triangle inequality, \eqref{est111}, and \eqref{est222},
\begin{eqnarray*}
\eta_1^2 &=&  \|A^{1/2} \nabla_h(u_{cr} - u_c)\|_0^2 + \cJ^{div}_{h}(\bsigma_{rt},u_{c};f)
\leq  C\|A^{1/2} \nabla_h(u_{cr} - u_c)\|_0^2 +\cJ^{div}_{h}(\bsigma_{rt},u_{cr};f) \\
&\leq& C\|A^{1/2} \nabla_h(u-u_{cr})\|_0^2 +\cJ^{div}_{h}(\bsigma_{rt},u_{cr};f) \leq  C\tri (\bsigma-\bsigma_{rt},u-u_{cr}) \tri^2
\end{eqnarray*}
The efficiency of $\eta_1$ is also proved.

By the triangle inequality, it is easy to see
$$
\eta_1^2 \leq \eta_2^2+\|A^{1/2} \nabla_h(u_{cr} - u_c)\|_0^2 \leq 2\eta_2^2.
$$
Similarly,
$
\eta_2^2 \leq 2\eta_1^2.
$
So $\eta_2$ is both reliable and efficient.

We get from \eqref{est111} that $\eta_3$ and $\eta_4 $ are both reliable and efficient.
The reliability and efficiency of $\eta_5$ and $\eta_6$ can be obtained from \eqref{jj1}.
\end{proof}
\begin{rem}
In this remark, we explain the terms in the error estimators. The error estimator contains three parts. The major part is the residual, that is, $\|\gradt\bsigma_{rt}+X_h u_{cr}-f\|_0$ or $\|\gradt\bsigma_{rt}+X u_{c}-f\|_0$. The other terms represent the fact that the numerical solutions are not in the right spaces. As discussed in \cite{CZ:09,CZ:10a,CYZ:11}, we have
\beq
\bsigma = -A\nabla u \in H(\divvr;\O), \quad u\in H^1(\O), \quad\mbox{and}\quad \nabla u \in H(\curll;\O).
\eeq
The result $\nabla u \in H(\curll;\O)$ is obtained from $u\in H^1(\O)$.
While numerically, 
\beq
-A\nabla_h u_{cr} \not\in H(\divvr;\O), \quad -A\nabla u_{c} \not\in H(\divvr;\O), \quad u_{cr}\not\in H^1(\O), \quad\mbox{and}\quad \nabla_h u_{cr} \not\in H(\curll;\O).
\eeq
The terms $\|A^{-1/2}\bsigma_{rt}+A^{1/2}\nabla u_{cr}\|_0$ and $\|A^{-1/2}\bsigma_{rt}+A^{1/2}\nabla u_{c}\|_0$ measure the distance of the numerical flux from $u_{cr}$ ($A\nabla_h u_{cr}$) or $u_c$ ($A\nabla u_{c}$) to $H(\divvr;\O)$ space.  The term $\sum_{F\in\cE} \dfrac{1}{h_F} \|\jump{u_{cr}}\|_{0,F}^2$ measures the distance of the numerical solution $u_{cr}$ to  $H^1(\O)$ space. The term $\sum_{F\in\cE} {h_F}\|\gamma_{t_F}(u_{cr})\|_{0,F}^2$ measures the distance of the numerical gradient $\nabla_h u_{cr}$ to  $H(\curll;\O)$ space. Due to that fact that $\nabla u \in H(\curll;\O)$ is actually obtained from $u\in H^1(\O)$ and the equivalence  \eqref{jj1}, we can use either  $\sum_{F\in\cE} \dfrac{1}{h_F} \|\jump{u_{cr}}\|_{0,F}^2$ or  $\sum_{F\in\cE} {h_F}\|\gamma_{t_F}(u_{cr})\|_{0,F}^2$ to measure this violation.

From the proof of Lemma \ref{counter_ex}, the curl part of the Helmholtz decomposition \eqref{hel1} is not controlled by the div least-squares functional. This part is exactly the so-called nonconforming error.
The necessity of measuring the violation that $ u_{cr}\not\in H^1(\O)$ or	 $\nabla_h u_{cr} \not\in H(\curll;\O)$  is discussed in the literature of a posteriori error estimates of nonconforming finite elements, see
 \cite{CBJ:02,Ain:05,CZ:10a,CHZ:17cr,CHZ:17,CHZ:20,CHZ:21}.
 
This also hints at how to design a least-squares functional with the norm equivalence on $W_D^{1+cr}\times H(\divvr;\O)$. An equation related to $\nabla u\in H(\curll)$ should be added. We will discuss it in the next section.
\end{rem}

\begin{rem}
Compared to $\eta_i$, $i=3,\cdots,6$, the estimators $\eta_1$ and $\eta_2$ use the matrix $A$, thus they will be more robust with respect to the coefficient.
\end{rem}

\section{Three-Field Potential–Flux–Intensity Div-Curl LSFEM with Nonconforming Approximation for General Elliptic Equations}
\setcounter{equation}{0}

Due to the lack of the norm equivalence in the abstract nonconforming piecewise $H^1$-space $W_D(\cT)$, the potential-flux div least-squares CR method does not have the automatic discrete stability and a built-in a posteriori error estimator with the least-squares functional. Inspired by the a posteriori error analysis discussed in the previous section, we propose three-field formulations, potential-flux-density div-curl least-squares methods. An intensity field and its curl equation are added to the least-squares formulation. In this new formulation, we can prove norm equivalence for the abstract nonconforming piecewise $H^1$-space. Thus, we recover the good qualities of the original LSFEM: automatically discrete stability without a mesh size requirement and regularity assumptions, and a built-in least-squares a posteriori error estimator. 


\subsection{Three-field potential–flux–intensity div-curl least-squares methods}
It is easy to see that only two of the last three equations in \eqref{gen_sys} are independent. Thus, we only need two of them to construct a least-squares functional. We will discuss one case first. The results can be easily generalized to the other two cases; see subsection \ref{twov}.
 
To simplify the notations, let
\begin{eqnarray*}
\Y:=H_N(\divvr;\O)\times H_D(\curllr;\O) \times W_D(\cT) \quad\mbox{and}\quad
 \|(\btau,\bpsi,v)\|_\Y^2 := \|\btau\|_{H(\divvr)}^2+ \|\bpsi\|_{H(\curllr)}^2+\|\nabla_h v\|_0^2.
\end{eqnarray*}
Define the following least-squares functional: For $(\btau,\bpsi,v)\in \Y$,
\beq
\cJ^{divcurl}_{h,1}(\btau,\bpsi,v;f) :=\|A^{-1/2}(\btau + A\nabla_h v)\|^2_0+ \|A^{-1/2}\btau - A^{1/2}\bpsi\|^2_0 +\|\curlt \bpsi\|^2_0+\|\gradt\btau+  X_h v-f\|^2_0 .
\eeq
Then the three-field potential-flux-intensity div-curl least-squares minimization problem in the abstract space is: Find $(\bsigma,\bphi,u)\in \Y$, such that
\beq
\cJ^{divcurl}_{h,1}(\bsigma,\bphi,u;f) = \inf_{(\btau,\bpsi,v)\in \Y}\cJ^{divcurl}_{h,1}(\btau,\bpsi,v;f).
\eeq 
For $(\bchi,\bbbeta,w) \in \Y$ and $(\btau,\bpsi,v) \in  \Y$, define the following  bilinear forms $c_{h}$:
\begin{eqnarray*}
c_{h}((\bchi,\bbbeta,w), (\btau,\bpsi,v))&:=& (A\nabla_h w+ \bchi, \nabla_h v+ A^{-1}\btau)+(\bchi-A\bbbeta, A^{-1}\btau-\bpsi)\\
&&+(\nabla\times \bbbeta, \nabla\times\bpsi)+(\gradt\bchi+X_h w, \gradt\btau+X_h v).
\end{eqnarray*}
Then the least-squares variational problems are: Find $(\bsigma,\bphi,u)\in \Y$,
\beq
c_{h}((\bsigma,\bphi,u), (\btau,\bpsi,v)) = (f, \gradt\btau+X_h v) \quad \forall (\btau,\bpsi,v) \in  \Y.
\eeq
It is clear the exact solution is $(\bsigma,\bphi,u)$.
The following continuity is also easy to verify: 
\beq
c_{h}((\bchi,\bbbeta,w), (\btau,\bpsi,v)) \leq C  \|(\bchi,\bbbeta,w)\|_\Y \|(\btau,\bpsi,v)\|_\Y \quad (\bchi,\bbbeta,w)\in \Y, (\btau,\bpsi,v)\in\Y.
\eeq

\subsection{Coerciveness of three-field div-curl least-squares method}
For a regular mesh $\cT$ with any mesh size, we want to prove the following norm equivalence under the minimal assumption that one of the conditions of Theorem \ref{assp} is true:
\beq
C \|(\btau,\bpsi,v)\|_\Y^2 \leq \cJ^{divcurl}_{h,1}(\btau,\bpsi,v;0) \leq C \|(\btau,\bpsi,v)\|_\Y^2\quad \forall (\btau,\bpsi,v)\in \Y.
\eeq
A careful look into the proofs of the coerciveness of the least-squares methods in \cite{BLP:97,Cai:04,CFZ:15} for the general elliptic equations will find that the compactness argument plays a central role in these proofs. In a compactness argument, the proof by contradiction is used. However, for a method depending on a discrete mesh $\cT$, we cannot use the proof by contradiction to show the coerciveness constant is independent of the mesh. Thus, we must seek a proof without using a mesh-dependent space like $W_D(\cT)$ or $W^{1+cr}_D$.

To this end, we use the Helmholtz decomposition to change the formulation into a mesh-independent setting. Suppose the conditions of Lemmas \ref{hm2D} and \ref{hm3D} hold, by Theorem \ref{hd}, for $v\in W_D(\cT)$, there exists $p \in H^1_D(\O)$ and $\tq\in \Q$ such that 
\beq \label{hmmmm}
A \nabla_h v = A\nabla p + \tcurl \tq \mbox{ and }
\|A^{-1/2}\nabla_h v\|_0^2 = \|A^{1/2}\nabla p\|_0^2 + \|A^{-1/2} \tcurl  \tq\|_0^2.
\eeq
To simplify notations, let
\begin{eqnarray*}
\Z&:=&H_N(\divvr;\O)\times H_D(\curllr;\O) \times H^1_D(\O) \times \Q,\\
\|(\btau,\bpsi,p,\tq)\|_\Z^2 &:=& \|\btau\|_{H(\divvr)}^2+ \|\bpsi\|_{H(\curllr)}^2+\|\nabla p\|_0^2 +\|\tcurl \tq\|_0^2.
\end{eqnarray*}
For $(\btau,\bpsi,p,\tq)\in \Z$, define
\begin{eqnarray*}
L(\btau,\bpsi,p,\tq) &=&\|A^{-1/2}(\btau + A\nabla p +\tcurl \tq)\|^2_0 + \|A^{-1/2}\btau - A^{1/2}\bpsi\|^2_0 \\
	&&+ \|\curlt \bpsi\|^2_0+\|\gradt\btau+  Xp+ \bb\cdot A^{-1}\tcurl\tq\|^2_0.
\end{eqnarray*}
\begin{lem}\label{precoer} Assuming that one of the conditions of Theorem \ref{assp} is true, we have the following coerciveness:
\begin{eqnarray}
C  \|(\btau,\bpsi,p,\tq)\|_\Z^2 \leq L(\btau,\bpsi,p,\tq)\quad\forall (\btau,\bpsi,p,\tq)\in \Z.
\end{eqnarray}
\end{lem}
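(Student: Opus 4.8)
The plan is to prove the coerciveness by a compactness--contradiction argument carried out entirely in the mesh-independent space $\Z$; this is legitimate precisely because the Helmholtz decomposition has removed all mesh dependence (on $W_D(\cT)$ a proof by contradiction could not produce a mesh-independent constant). Suppose the estimate fails. Then there is a sequence $(\btau_n,\bpsi_n,p_n,\tq_n)\in\Z$ with $\|(\btau_n,\bpsi_n,p_n,\tq_n)\|_\Z=1$ and $L(\btau_n,\bpsi_n,p_n,\tq_n)\to 0$. First I would note that each of the four squared terms tends to zero, and use the divergence term together with the Poincar\'e inequality on $H^1_D(\O)$ and the Poincar\'e--Friedrichs inequality \eqref{PF} on $\Q$ to bound $\|\gradt\btau_n\|_0$. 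Hence $\btau_n$ is bounded in $H_N(\divvr;\O)$, $\bpsi_n$ in $H_D(\curllr;\O)$ with $\curlt\bpsi_n\to0$, $p_n$ in $H^1_D(\O)$, and $\tq_n$ in $H^1$; after passing to a subsequence I obtain weak limits in these spaces and, by Rellich, strong $L^2$ convergence $p_n\to p$ and $\tq_n\to\tq$.

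The decisive step, and the device that lets me sidestep the non-compactness of $H(\divvr;\O)$ and $H(\curllr;\O)$, is to show directly that $\|\tcurl\tq_n\|_0\to0$. I would test the first term of $L$ against $A^{-1}\tcurl\tq_n$: the gradient--curl orthogonality \eqref{ip1} annihilates the $(\nabla p_n,\tcurl\tq_n)$ contribution, the second term lets me replace $A^{-1}\btau_n$ by $\bpsi_n$ up to an $o(1)$ error in $L^2$, and \eqref{ip2} converts $(\bpsi_n,\tcurl\tq_n)$ into $(\curlt\bpsi_n,\tq_n)\to0$. What survives is $\|A^{-1/2}\tcurl\tq_n\|_0^2\to0$, so $\tcurl\tq_n\to0$ strongly and, by \eqref{PF}, $\tq_n\to0$ in $H^1$. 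Passing to the weak limit in the first and fourth terms then leaves $\btau=-A\nabla p$ and $-\gradt(A\nabla p)+Xp=0$ with $p\in H^1_D(\O)$; the minimal uniqueness hypothesis, condition (1) of Theorem \ref{assp}, forces $p=0$, and therefore $p_n\to0$ in $L^2$.

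Finally I would bootstrap to strong convergence of the full $\Z$-norm. Testing the first term against $A\nabla p_n$ and integrating by parts, where the boundary contributions vanish because $\btau_n\in H_N(\divvr;\O)$ and $p_n\in H^1_D(\O)$, gives $(\btau_n,\nabla p_n)=-(\gradt\btau_n,p_n)\to0$, whence $\|A^{1/2}\nabla p_n\|_0\to0$; the first term then yields $\btau_n\to0$ in $L^2$, the fourth term yields $\gradt\btau_n\to0$, and the second term yields $\bpsi_n\to0$ in $L^2$, while $\curlt\bpsi_n\to0$ is already in hand. Thus $\|(\btau_n,\bpsi_n,p_n,\tq_n)\|_\Z\to0$, contradicting the normalization and proving the claim.

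I expect the main obstacle to be exactly this loss of compactness: since $H(\divvr;\O)$ and $H(\curllr;\O)$ do not embed compactly into $L^2(\O)$, one cannot simply extract strongly convergent subsequences of $\btau_n$ and $\bpsi_n$. The remedy is never to request such compactness and instead to extract the strong decay of the curl component $\tcurl\tq_n$ and of $\nabla p_n$ from the orthogonality identities \eqref{ip1}--\eqref{ip2}, after which strong convergence of $\btau_n$ and $\bpsi_n$ is read off algebraically from the functional itself. A secondary point needing care is that the lower-order perturbations $\bb\cdot\nabla p_n$, $cp_n$, and $\bb\cdot A^{-1}\tcurl\tq_n$ behave correctly under weak and strong limits, which they do because $\bb,c\in L^\infty(\O)$ while $\nabla p_n$ and $\tcurl\tq_n$ converge.
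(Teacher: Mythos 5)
Your proof is correct, but it takes a genuinely different route from the paper's. The paper proves Lemma \ref{precoer} directly in two steps: it first applies the known two-field norm equivalence \eqref{norm-equ-standard} to the pair $(\btau,p)$ and the triangle inequality to bound $\|\btau\|_{H(\divvr)}+\|\nabla p\|_0$ by the terms of $L$ plus $C\|\tcurl \tq\|_0$ (estimate \eqref{tpq}), and then bounds $\|\tcurl \tq\|_0$ by $L(\btau,\bpsi,p,\tq)^{1/2}$ via the orthogonality identities \eqref{ip1}--\eqref{ip2} and the Poincar\'e--Friedrichs inequality \eqref{PF} (estimate \eqref{qq}); $\|\bpsi\|_0$ then follows from a triangle inequality. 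Your ``decisive step'' for $\tcurl\tq_n$ is exactly the paper's \eqref{qq}, only phrased asymptotically along a contradicting sequence. Where you genuinely differ is that you do not reuse \eqref{norm-equ-standard}: you rerun the full compactness/contradiction machinery (weak limits, Rellich, identification of the limit problem, condition (1) of Theorem \ref{assp}) inside the mesh-independent space $\Z$ --- legitimate, as you note, precisely because the Helmholtz decomposition removed the mesh, which is exactly why the paper works in $\Z$. The trade-off: the paper's argument is shorter, hides the compactness inside the cited proof of \eqref{norm-equ-standard}, and yields a constant expressible through that of \eqref{norm-equ-standard}; yours is self-contained, needing only the uniqueness assumption plus Rellich and the Helmholtz orthogonality, but, being a contradiction argument, gives no explicit constant and in effect reproves the conforming coerciveness along the way. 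One detail you should make explicit: when identifying the limit you must note that $\btau$ stays in $H_N(\divvr;\O)$ (a closed subspace, hence weakly closed), so that $\btau=-A\nabla p$ and $\gradt\btau+Xp=0$ can be integrated by parts against any $v\in H^1_D(\O)$ without boundary terms, giving $a(p,v)=0$ and allowing condition (1) of Theorem \ref{assp} to force $p=0$; the strong form of the homogeneous equation with only the Dirichlet condition on $\Gamma_D$, i.e.\ without the inherited natural condition $A\nabla p\cdot\bn=0$ on $\Gamma_N$, would not suffice.
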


\begin{proof}
By the standard coerciveness \eqref{norm-equ-standard} and the triangle inequality, we have
\begin{eqnarray}\nonumber
&&C(\|\btau\|_{H(\divvr)}+\|\nabla p\|_0) \leq \|A^{-1/2}\btau + A^{1/2}\nabla p\|_0+\|\gradt\btau+   X p\|_0\\ \label{tpq}
&&\quad\leq \|A^{-\frac12}(\btau + A\nabla p+\tcurl \tq)\|_0+\|\gradt\btau+ Xp+\bb\cdot A^{-1}\tcurl \tq\|_0 + C \|\tcurl \tq\|_0.
\end{eqnarray}
By \eqref{ip1}, \eqref{ip2}, the Poincar\'{e}-Friedrichs inequality \eqref{PF}, Cauchy-Schwarz inequality, and the property of the coefficient matrix $A$, we have
\begin{eqnarray*}\notag
&&\|A^{-1/2}\tcurl \tq\|^2_0
= (A^{-1}\tcurl \tq +\nabla p + A^{-1}\btau, \tcurl \tq)-(A^{-1}\btau-\bpsi, \tcurl \tq)- (\bpsi, \tcurl \tq)\\\notag
&=& (A^{-1}\tcurl \tq +\nabla p + A^{-1}\btau, \tcurl \tq)-(A^{-1}\btau-\bpsi, \tcurl \tq)+ (\nabla\times \bpsi, \tq)\\\notag
&\leq&C(\| A^{-1/2}\tcurl \tq+A^{1/2}\nabla p + A^{-1/2}\btau\|_0 + \|A^{-1/2}\btau - A^{1/2}\bpsi\|_0+\|\nabla\times \bpsi\|_0 )\|A^{-1/2}\tcurl \tq\|_0.
\end{eqnarray*}
Thus, 
\beq\label{qq}
C\|\tcurl \tq\|_0
\leq \| A^{-1/2}\tcurl \tq+A^{1/2}\nabla p + A^{-1/2}\btau\|_0 + \|A^{-1/2}\btau - A^{1/2}\bpsi\|_0+\|\nabla\times \bpsi\|_0.
\eeq
We then get the following inequality:
$$
\|\tcurl \tq\|^2_0 \leq C L(\btau,\bpsi,p,q).
$$
The term $\|\bpsi\|_0$ can be bounded by the triangle inequality:
$$
\|\bpsi\|_0\leq C\|A^{-1/2}\btau - A^{1/2}\bpsi\|_0 + C\|\btau\|_0.
$$
Combined with the above results and using the fact $\|\nabla \times \bpsi\|_0$ is a part of $L(\btau,\bpsi,p,q)$, the lemma is proved.
\end{proof}

\begin{thm} \label{divcurl_coer}
 Assuming that one of the conditions of Theorem \ref{assp} is true, and the conditions of Lemmas \ref{hm2D} and \ref{hm3D} are true, we have the following norm equivalence,
\beq\label{normeqJ1}
C \|(\btau,\bpsi,v)\|_\Y^2 \leq \cJ^{divcurl}_{h,1}(\btau,\bpsi,v;0) \leq C \|(\btau,\bpsi,v)\|_\Y^2  \quad \forall (\btau,\bpsi,v)\in \Y.
\eeq
\end{thm}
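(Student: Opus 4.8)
The plan is to transfer the mesh\emph{-}independent coerciveness of Lemma~\ref{precoer} on $\Z$ back to the mesh\emph{-}dependent space $\Y$ by means of the Helmholtz decomposition \eqref{hmmmm}. The upper bound in \eqref{normeqJ1} is routine: applying the triangle inequality to each of the four terms of $\cJ^{divcurl}_{h,1}(\btau,\bpsi,v;0)$, bounding $\|\curlt\bpsi\|_0$ by $\|\bpsi\|_{H(\curllr)}$, and using the discrete Poincar\'e--Friedrichs inequality \eqref{dPF} to absorb the zeroth-order contribution $\|v\|_0$ arising from $X_h v=\bb\cdot\nabla_h v+cv$ into $\|\nabla_h v\|_0$, gives $\cJ^{divcurl}_{h,1}(\btau,\bpsi,v;0)\le C\|(\btau,\bpsi,v)\|_\Y^2$. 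So I focus on the lower bound.

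Given $(\btau,\bpsi,v)\in\Y$, I would first apply Theorem~\ref{hd} to $A\nabla_h v$, obtaining $p\in H^1_D(\O)$ and $\tq\in\Q$ with $A\nabla_h v=A\nabla p+\tcurl\tq$ and the orthogonality identity in \eqref{hmmmm}, so that $(\btau,\bpsi,p,\tq)\in\Z$. Substituting $\nabla_h v=\nabla p+A^{-1}\tcurl\tq$ into $\cJ^{divcurl}_{h,1}(\btau,\bpsi,v;0)$ shows that its first three terms coincide exactly with the first three terms of $L(\btau,\bpsi,p,\tq)$, while the residual terms differ only in their zeroth-order part: the functional carries $\|\gradt\btau+\bb\cdot\nabla p+\bb\cdot A^{-1}\tcurl\tq+cv\|_0^2$, whereas $L$ carries the same expression with $cv$ replaced by $cp$.

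The main obstacle is precisely this mismatch $c(v-p)$, which prevents $\cJ^{divcurl}_{h,1}$ and $L$ from being literally equal; I expect this to be the crux of the argument. I would control it as follows. Since $p\in H^1_D(\O)\subset W_D(\cT)$ and $v\in W_D(\cT)$, the difference $v-p$ lies in $W_D(\cT)$ and satisfies $\nabla_h(v-p)=A^{-1}\tcurl\tq$; the discrete Poincar\'e--Friedrichs inequality \eqref{dPF} then yields $\|v-p\|_0\le C\|\nabla_h(v-p)\|_0\le C\|\tcurl\tq\|_0$, hence $\|c(v-p)\|_0\le C\|\tcurl\tq\|_0$. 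Moreover, the curl estimate \eqref{qq}, after the substitution $A\nabla p+\tcurl\tq=A\nabla_h v$, bounds $\|\tcurl\tq\|_0$ by exactly the square roots of the first three terms of $\cJ^{divcurl}_{h,1}$, so that $\|\tcurl\tq\|_0^2\le C\,\cJ^{divcurl}_{h,1}(\btau,\bpsi,v;0)$. Combining these via the triangle inequality on the residual term gives $L(\btau,\bpsi,p,\tq)\le C\,\cJ^{divcurl}_{h,1}(\btau,\bpsi,v;0)$.

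Finally, I would chain the inequalities: Lemma~\ref{precoer} gives $C\|(\btau,\bpsi,p,\tq)\|_\Z^2\le L(\btau,\bpsi,p,\tq)\le C\,\cJ^{divcurl}_{h,1}(\btau,\bpsi,v;0)$, and the norm identity in \eqref{hmmmm} together with the spectral bounds \eqref{A} on $A$ gives $\|\nabla_h v\|_0^2\le C(\|\nabla p\|_0^2+\|\tcurl\tq\|_0^2)$. Since $\|\btau\|_{H(\divvr)}$ and $\|\bpsi\|_{H(\curllr)}$ enter both norms identically, this yields $\|(\btau,\bpsi,v)\|_\Y^2\le C\|(\btau,\bpsi,p,\tq)\|_\Z^2\le C\,\cJ^{divcurl}_{h,1}(\btau,\bpsi,v;0)$, which is the desired lower bound and completes the proof of the norm equivalence.
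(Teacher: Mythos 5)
Your proposal is correct and follows essentially the same route as the paper's proof: Helmholtz decomposition of $A\nabla_h v$, transfer via Lemma~\ref{precoer}, control of the zeroth-order mismatch $c(v-p)$ through the discrete Poincar\'e--Friedrichs inequality and $\nabla_h(v-p)=A^{-1}\tcurl\tq$, and the bound \eqref{qq} on $\|\tcurl\tq\|_0$ by the functional. The only difference is cosmetic ordering (you establish $L\le C\,\cJ^{divcurl}_{h,1}$ first and then chain with Lemma~\ref{precoer}, while the paper interleaves the two), so no further changes are needed.
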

\begin{proof}
The upper bound is straightforward with the triangle inequality and the discrete the Poincar\'{e}-Friedrichs inequality \eqref{dPF}.

We will focus on proving the coerciveness.
By the Helmholtz decomposition \eqref{hmmmm}, the result of Lemma \ref{precoer}, the triangle inequality, and the nonconforming Poincar\'{e}-Friedrichs inequality \eqref{dPF}.
\begin{eqnarray*}
&&\|A^{1/2}\nabla_h v\|^2_0 + \|\btau\|^2_{H(\divvr)} + \|\bpsi\|^2_{H(\curllr)}
= \|A^{1/2}\nabla p\|^2_0
+ \|A^{-1/2}\tcurl \tq|^2_0 + \|\btau\|^2_{H(\divvr)} + \|\bpsi\|^2_{H(\curllr)} \\
&\leq& CL(\btau,\bpsi,p,\tq) \\
&=&C\big(\|A^{-1/2}(\btau + A\nabla p + \tcurl \tq)\|^2_0 + \|A^{-1/2}\btau - A^{1/2}\bpsi\|^2_0
+ \|\curlt \bpsi\|^2_0+\|\gradt\btau+  Xp+ \bb\cdot A^{-1}\tcurl\tq\|^2_0\\
&=& C(\|A^{-1/2}(\btau + A\nabla_h v)\|^2_0+ \|A^{-1/2}\btau - A^{1/2}\bpsi\|^2_0 +\|\curlt \bpsi\|^2_0+\|\gradt\btau+  X_h v+c(p-v)\|^2_0)
\\
&\leq& C(\cJ^{divcurl}_{h,1}(\btau,\bpsi,v;0)+ \|p-v\|^2_0 )
\leq C(\cJ^{divcurl}_{h,1}(\btau,\bpsi,v;0)+ \|\nabla p-\nabla_h v\|^2_0)\\
&\leq&C(\cJ^{divcurl}_{h,1}(\btau,\bpsi,v;0)+ \|A^{-1}\tcurl \tq\|_0^2).
\end{eqnarray*}
By \eqref{qq}, we also have
\begin{eqnarray}\notag
C \|A^{-1/2}\tcurl \tq\|_0^2
&\leq&\|A^{-1/2}\tcurl \tq+A^{1/2}\nabla p + A^{-1/2}\btau\|_0^2 + \|A^{-1/2}\btau - A^{1/2}\bpsi\|_0^2+\|\nabla\times \bpsi\|_0^2  \\ \nonumber 
&\leq& \cJ^{divcurl}_{h,1}(\btau,\bpsi,v;0).
\end{eqnarray}
The theorem is then proved.
\end{proof}
%

%

\subsection{Three-field potential–flux–intensity div-curl nonconforming least-squares finite element methods}
Let 
$$
\Y_h:=RT_{0,N}\times N_{0,D} \times V^{cr}_D.
$$ We have $\Y_h\subset \Y$.
The three-field div-curl nonconforming LSFEM is to find $(\bsigma_h,\bphi_h,u_h)\in \Y_h$, such that
\beq \label{lsfem_m}
\cJ^{divcurl}_{h,1}(\bsigma_h,\bphi_h,u_h;f) = \inf_{(\btau_h,\bpsi_h,v_h)\in \Y_h}\cJ^{divcurl}_{h,1}(\btau_h,\bpsi_h,v_h;f), 
\eeq 
or equivalently in a weak form:
Find $(\bsigma_h,\bphi_h,u_h)\in \Y_h$, such that
\beq \label{lsfem_v}
c_{h}((\bsigma_h,\bphi_h,u_h), (\btau_h,\bpsi_h,v_h)) = (f, \gradt\btau_h+X_h v_h) \quad \forall (\btau_h,\bpsi_h,v_h) \in  \Y_h.
\eeq
It is easy to see we have the error equation,
$$
c_{h}((\bsigma-\bsigma_h,\bphi-\bphi_h,u-u_h), (\btau_h,\bpsi_h,v_h))=0 \quad \forall (\btau_h,\bpsi_h,v_h) \in  \Y_h,
$$
and the a priori error estimate:
Assuming $u\in H^2(\O)$ (thus $\bphi\in H^1(\O)$ and we always have $\nabla\times \bphi=0$), $\bsigma|_K \in H^{1}(K)$, and $\gradt\bsigma|_K \in H^{1}(K)$,  for $K\in \cT$, then
\begin{eqnarray*}
\|(\bsigma-\bsigma_h,\bphi-\bphi_h,u-u_h)\|_\Y &\leq& C\inf_{(\btau_h,\bpsi_h,v_h) \in  \Y_h}\|(\bsigma-\btau_h,\bphi-\bpsi_h,u-v_h)\|_\Y 
\\
&\leq &C h (\|u\|_{2}+ \|\bsigma\|_{1,h} +  \|\gradt\bsigma\|_{1,h}),
\end{eqnarray*}
where $\|v\|_{1,h}^2 = \sum_{K\in\cT} \|v\|_{1,K}^2$.
\begin{rem}
Using the more refined analysis of local interpolations or local quasi-interpolations of N\'ed\'elec elements \cite{FE1,EG:17}, we can also show a local optimal error estimate.
\end{rem}

Let $(\bsigma_h,\bphi_h,u_h)\in\Y_h$ be the numerical solution of \eqref{lsfem_m}. Define the following a posteriori error estimator:
\beq
\zeta ^2: = \cJ^{divcurl}_{h,1}(\bsigma_h,\bphi_h,u_h;f). 
\eeq
Due to the norm equivalence \eqref{normeqJ1}, we have the global reliability and efficiency of $\zeta$:
\beq
C_1 \zeta ^2\leq  \|(\bsigma-\bsigma_h,\bphi-\bphi_h,u-u_h )\|_\Y^2 \leq
	C_2 \zeta ^2.
\eeq

\begin{rem}
Since $-\bphi$ and $-A^{-1}\bsigma$ are both $\nabla u$, the term $X_h v$ in the above definitions of the least-squares functional can also be replaced by
\begin{eqnarray}
G_{h,rst}(\bpsi,\btau,v) & := & \bb\cdot(r\nabla_h v - s\bpsi -t A^{-1}\btau) + cv \quad \forall (\btau,\bpsi,v)\in \Y,
\end{eqnarray}
where $(r,s,t) \in [0,1]^3$ and $r+s+t =1$. We can get similar results of norm equivalence and a priori and a posteriori error estimates.
\end{rem}


\subsection{Two variants of three-field potential–flux–intensity div-curl nonconforming least-squares method}\label{twov}
We can also use other combinations in the first-order system \eqref{gen_sys} to define least-squares functionals.
Define the following least-squares functionals: For $(\btau,\bpsi,v)\in \Y$,
\begin{eqnarray*}
\cJ_{h,2}(\btau,\bpsi,v;f) &:=&\|A^{1/2}(\nabla_h v + \bpsi)\|^2_0+ \|A^{-1/2}\btau - A^{1/2}\bpsi\|^2_0+ \|\curlt \bpsi\|^2_0+\|\gradt\btau+  X_h v-f\|^2_0,\\ 
\cJ_{h,3}(\btau,\bpsi,v;f) &:=&\|A^{-1/2}(\btau + A\nabla_h v)\|^2_0+ \|A^{1/2}(\nabla_h v + \bpsi)\|^2_0+ \|\curlt \bpsi\|^2_0+\|\gradt\btau+  X_h v-f\|^2_0.
\end{eqnarray*}
We then prove the equivalence of different $\cJ_{h,i}$. We use the notation $B \approx D$ to denote there exist constants $C_1$ and $C_2$, such that $C_1 D\leq B\leq C_2 D$.    
\begin{lem}
The following equivalences are true:
\begin{eqnarray} \label{equva}
\cJ^{divcurl}_{h,1}(\btau,\bpsi,v;0) \approx  \cJ_{h,2}(\btau,\bpsi,v;0)  \approx \cJ_{h,3}(\btau,\bpsi,v;0)  \quad \forall (\btau,\bpsi,v)\in \Y.
\end{eqnarray}
\end{lem}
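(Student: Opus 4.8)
The plan is to notice that all three functionals share the two summands $\|\curlt\bpsi\|_0^2$ and $\|\gradt\btau+X_h v\|_0^2$ (recall $f=0$), so these cancel out of every comparison and it suffices to compare the remaining two quadratic terms in each functional. First I would set $\mathbf{p}:=A^{-1/2}\btau$, $\mathbf{q}:=A^{1/2}\nabla_h v$, and $\mathbf{r}:=A^{1/2}\bpsi$ as fields in $L^2(\O)^d$. Then the non-shared terms are exactly
$$
T_1:=\|\mathbf{p}+\mathbf{q}\|_0^2,\qquad T_2:=\|\mathbf{p}-\mathbf{r}\|_0^2,\qquad T_3:=\|\mathbf{q}+\mathbf{r}\|_0^2,
$$
and a direct inspection gives $\cJ^{divcurl}_{h,1}=T_1+T_2+C$, $\cJ_{h,2}=T_3+T_2+C$, and $\cJ_{h,3}=T_1+T_3+C$, where $C:=\|\curlt\bpsi\|_0^2+\|\gradt\btau+X_h v\|_0^2$ is common to all three.

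The key observation is the purely vectorial identity
$$
(\mathbf{p}+\mathbf{q})-(\mathbf{p}-\mathbf{r})=\mathbf{q}+\mathbf{r},
$$
which exhibits the three residual fields as linearly dependent: each is a signed sum of the other two. Combining this identity with the triangle inequality and $\|\mathbf{u}+\mathbf{w}\|_0^2\leq 2\|\mathbf{u}\|_0^2+2\|\mathbf{w}\|_0^2$ yields the three cyclic bounds $T_3\leq 2(T_1+T_2)$, $T_1\leq 2(T_2+T_3)$, and $T_2\leq 2(T_1+T_3)$, each holding pointwise for every $(\btau,\bpsi,v)\in\Y$. Note that no property of $A$ is used beyond the mere existence of $A^{\pm 1/2}$, and that the uniqueness hypothesis of Theorem \ref{assp}, the Helmholtz decomposition, and the discrete Poincar\'e--Friedrichs inequality — all essential to Theorem \ref{divcurl_coer} — play no role here.

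From these bounds each equivalence in \eqref{equva} follows by a one-line estimate. For example, to compare $\cJ^{divcurl}_{h,1}$ with $\cJ_{h,2}$ I would insert $T_1\leq 2(T_2+T_3)$ to obtain $\cJ^{divcurl}_{h,1}=T_1+T_2+C\leq 2T_3+3T_2+C\leq 3\cJ_{h,2}$, and symmetrically insert $T_3\leq 2(T_1+T_2)$ to get $\cJ_{h,2}\leq 3\cJ^{divcurl}_{h,1}$; the comparison of $\cJ^{divcurl}_{h,1}$ with $\cJ_{h,3}$ is identical after interchanging the roles of $T_2$ and $T_3$, and $\cJ_{h,2}\approx\cJ_{h,3}$ then follows by transitivity.

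There is no genuine analytic obstacle in this lemma; the only thing to be careful about is the bookkeeping — correctly matching each functional to its pair of residual terms $T_i$ and checking the signs in the linear-dependence identity so that the cyclic bounds come out with the right indices. Once that is verified the constants are explicit (one may take $C_1=1/3$ and $C_2=3$ in every comparison), and the equivalence holds for all of $\Y$ irrespective of whether the underlying PDE is well posed.
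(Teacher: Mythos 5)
Your proof is correct and follows essentially the same route as the paper: the paper also observes that the three residuals $A^{-1/2}\btau+A^{1/2}\nabla_h v$, $A^{-1/2}\btau-A^{1/2}\bpsi$, and $A^{1/2}(\nabla_h v+\bpsi)$ are linearly dependent and applies the triangle inequality (e.g.\ $\|A^{1/2}(\nabla_h v+\bpsi)\|_0\leq\|A^{-1/2}\btau+A^{1/2}\nabla_h v\|_0+\|A^{-1/2}\btau-A^{1/2}\bpsi\|_0$), with the curl and divergence terms shared across all three functionals. Your version merely makes the bookkeeping and the constants ($C_1=1/3$, $C_2=3$) explicit, which the paper leaves implicit.
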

\begin{proof}
By the triangle inequality, we have
$$
\|A^{1/2}(\nabla_h v+ \bpsi)\|_0\leq \|A^{-1/2}\btau + A^{1/2}\nabla_h v\|_0+\|A^{-1/2}\btau - A^{1/2}\bpsi\|_0.
$$
Thus $\cJ_{h,2}(\btau,\bpsi,v;0) \leq C\cJ^{divcurl}_{h,1}(\btau,\bpsi,v;0) $ is true for all $(\btau,\bpsi,v)\in \Y$. The other results can be proved by similar arguments.
\end{proof}
We thus can define potential–flux–intensity div-curl nonconforming least-squares methods based on functionals $\cJ^{divcurl}_{h,2}$ and $\cJ^{divcurl}_{h,3}$. Due to the equivalence \eqref{equva}, all results of properties related to $\cJ^{divcurl}_{h,1}$ can be generalized to methods defined by $\cJ^{divcurl}_{h,2}$ or $\cJ^{divcurl}_{h,3}$.

\section{A Two-Field Potential–Flux Div-Curl LSFEM with Nonconforming Approximation}
\setcounter{equation}{0}

In this section, we discuss the application and restrictions of the original potential-flux div-curl least-squares method \cite{CMM:97} in a nonconforming setting. When the domain is nice, and the coefficient is sufficiently smooth, the original formulation introduced in \cite{CMM:97} can be used in the nonconforming case. But this two-field formulation can cause serious problems when the conditions on the domain and coefficients are not satisfied. 
\subsection{Two-field potential–flux div-curl first-order system}
In the original paper \cite{CMM:97}, the intensity $\bphi=-\nabla u$ is not introduced as an independent variable; instead, the following first-order system is discussed.
\begin{equation} \label{gen_sys2}
\gradt \bsigma +  Xu   = f  \mbox{ in } \O,  \quad
 A\nabla u+ \bsigma   = 0  \mbox{ in } \O, \quad
\curlt (A^{-1}\bsigma) = 0  \mbox{ in } \O,
\end{equation}
with boundary conditions
$u=0 \mbox{ on }\Gamma_D$, $\gamma_{t}(A^{-1}\bsigma)=0 \mbox{ on }\Gamma_D$, and 
$\bn\cdot\bsigma=0 \mbox{ on } \Gamma_N.
$
Let 
$$
\bSigma := \{\btau: \btau \in H_N(\divvr;\O), \; A^{-1}\btau\in H_D(\curll;\O)\}
\quad\mbox{with  }
\|\btau\|_{\bSigma}^2 := \|\btau\|_0^2 + \|\gradt\btau\|_0^2 +\|\nabla\times(A^{-1}\btau)\|_0^2.
$$ 
Then the exact flux $\bsigma\in \bSigma$.

In this section, we assume the following assumption is true. Detailed discussion on this norm equivalence can be found in \cite{CMM:97}, see also discussion in Section A.3.1 of \cite{BG:09}.
\begin{assumption} \label{cont}
We assume that $A\in C^{1,1}$ and the domain $\O$ is nice enough to guarantee 
that $\bSigma$ is algebraically and topologically included in $H^1(\O)^d$, that is,  the following norm equivalence holds:
\beq
C_1\|\btau\|_1 \leq \|\btau\|_{\bSigma} \leq C_2\|\btau\|_1 \quad \forall \btau\in \bSigma.
\eeq
\end{assumption}
Here, the domain is nice enough means one of the following is true (Theorem A.8 of \cite{BG:09}):
\begin{itemize}
\item $\p\O$ is of class $C^{1,1}$,
\item $\p\O$ is piecewise smooth with no reentrant corners for $d=2$,
\item $\O$ is a convex polyhedron for $d=3$.
\end{itemize}

\subsection{A two-field potential–flux div-curl least-squares method}
Now, the  two-field potential-flux div-curl least-squares functional is given by
\beq
\cG_h(\btau,v;f) := \|A^{-1/2}\btau+A^{1/2}\nabla_h v\|_0^2+\|\gradt\btau+X_hv-f\|_0^2+\|\nabla\times (A^{-1}\btau)\|_0^2 \quad \forall (\btau,v)\in \bSigma\times W_D(\cT).
\eeq
Then the potential-flux div-curl least-squares minimization problem in the abstract space is: 
\beq\label{lspf1}
\mbox{Find  } (\bsigma,u)\in \bSigma\times W_D(\cT), \mbox{  such that  }
\cG_h(\bsigma,u;f) = \inf_{(\btau,v)\in \bSigma\times W_D(\cT)}\cG_h(\btau,v;f).
\eeq 
For $(\bchi,w) \in  \bSigma\times W_D(\cT) $ and $(\btau,v) \in   \bSigma\times W_D(\cT)$, define the following  bilinear forms $d_{h}$:
$$
d_{h}((\bchi,w), (\btau,v)) := (A\nabla_h w+ \bchi, \nabla_h v+ A^{-1}\btau)+(\nabla\times (A^{-1}\bchi), \nabla\times(A^{-1}\btau))+(\gradt\bchi+X_h w, \gradt\btau+X_h v).
$$
Then the  two-field potential-flux div-curl least-squares variational problems is: Find $(\bsigma,u)\in \bSigma\times W_D(\cT)$,
\beq \label{lspf2}
d_{h}((\bsigma,u), (\btau,v)) = (f, \gradt\btau+X_h v) \quad \forall (\btau,v) \in  \bSigma\times W_D(\cT).
\eeq
We use almost the same argument of the three-field potential-flux-curl least-squares method to prove the coerciveness of $\cG_h(\btau,v;0)$.  We assume the same Helmholtz decomposition \eqref{hmmmm} holds.
Let
\begin{eqnarray*}
\T := \bSigma \times H^1_D(\O) \times \Q \quad\mbox{and}\quad
\|(\btau,p,\tq)\|_\T^2 := \|\btau\|_{\bSigma}^2+\|\nabla p\|_0^2 +\|\tcurl \tq\|_0^2.
\end{eqnarray*}
For $(\btau,p,\tq)\in \T$, define
\begin{eqnarray*}
M(\btau,p,\tq) :=\|A^{-1/2}(\btau + A\nabla p +\tcurl \tq)\|^2_0
	+ \|\curlt (A^{-1}\btau)\|^2_0+\|\gradt\btau+  Xp+ \bb\cdot A^{-1}\tcurl\tq\|^2_0.
\end{eqnarray*}
\begin{lem}   Assuming that one of the conditions of Theorem \ref{assp} is true and Assumption \ref{cont} is true, we have the following coerciveness:
\begin{eqnarray}
C  \|(\btau,p,\tq)\|_\T^2 \leq M(\btau,p,\tq)\quad\forall (\btau,p,\tq)\in \T.
\end{eqnarray}
\end{lem}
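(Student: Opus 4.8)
The plan is to mirror the proof of Lemma~\ref{precoer}, controlling each piece of the $\T$-norm by $M$, with the single structural change that the independent intensity field is gone and its curl is now carried directly by $A^{-1}\btau$. This substitution is legitimate precisely because membership in $\bSigma$ forces $A^{-1}\btau\in H_D(\curll;\O)$. I first note that the summand $\|\curlt(A^{-1}\btau)\|_0^2$ of the $\bSigma$-norm is literally one of the three terms of $M$, so it needs no work; it therefore suffices to bound $\|\btau\|_{H(\divvr)}^2+\|\nabla p\|_0^2+\|\tcurl\tq\|_0^2$ by $M$.

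For the first two quantities I apply the standard continuous coerciveness \eqref{norm-equ-standard} to the conforming pair $(\btau,p)\in H_N(\divvr;\O)\times H^1_D(\O)$, which is admissible since $\btau\in\bSigma\subset H_N(\divvr;\O)$. Exactly as in the derivation of \eqref{tpq}, a triangle inequality that peels off the $\tcurl\tq$ contributions gives $C(\|\btau\|_{H(\divvr)}+\|\nabla p\|_0)\leq \|A^{-1/2}(\btau+A\nabla p+\tcurl\tq)\|_0 + \|\gradt\btau+Xp+\bb\cdot A^{-1}\tcurl\tq\|_0 + C\|\tcurl\tq\|_0$, whose first two terms are square roots of summands of $M$. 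Thus this step reduces everything to controlling $\|\tcurl\tq\|_0$.

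The crux is this bound, and it is here that the two-field formulation departs from the three-field one. Writing $A^{-1}\tcurl\tq = A^{-1}(\btau+A\nabla p+\tcurl\tq)-A^{-1}\btau-\nabla p$ and testing against $\tcurl\tq$, the orthogonality \eqref{ip1} annihilates the $(\nabla p,\tcurl\tq)$ term, so
$$
\|A^{-1/2}\tcurl\tq\|_0^2 = (A^{-1}(\btau+A\nabla p+\tcurl\tq),\tcurl\tq) - (A^{-1}\btau,\tcurl\tq).
$$
Since $A^{-1}\btau\in H_D(\curll;\O)$, applying \eqref{ip2} rewrites the last term as $(\curlt(A^{-1}\btau),\tq)$, which by Cauchy--Schwarz and the Poincar\'e--Friedrichs inequality \eqref{PF} is bounded by $C\|\curlt(A^{-1}\btau)\|_0\|\tcurl\tq\|_0$. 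This single flux-curl term does the entire job here, replacing the two terms $\|A^{-1/2}\btau-A^{1/2}\bpsi\|_0$ and $\|\nabla\times\bpsi\|_0$ that appeared in the three-field estimate \eqref{qq}. Using the spectral bounds \eqref{A} to pass between $\|\tcurl\tq\|_0$ and $\|A^{-1/2}\tcurl\tq\|_0$ and dividing through yields $C\|\tcurl\tq\|_0\leq \|A^{-1/2}(\btau+A\nabla p+\tcurl\tq)\|_0 + \|\curlt(A^{-1}\btau)\|_0$, i.e.\ $\|\tcurl\tq\|_0^2\leq CM$.

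Combining the three steps bounds $\|(\btau,p,\tq)\|_\T^2$ by $CM$, which is the claim. The only genuinely delicate point is the curl-control step above, and within it the verification that $A^{-1}\btau$ is admissible in \eqref{ip2}; this is guaranteed by the very definition of $\bSigma$, which is also the structural reason Assumption~\ref{cont} is imposed, so that $\bSigma$ is a genuine $H^1$-type flux space. Everything else is triangle inequalities together with the already-established continuous coerciveness.
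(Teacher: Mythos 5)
Your proof is correct and follows essentially the same route as the paper: you bound $\|\tcurl \tq\|_0$ by testing against $\tcurl\tq$, using \eqref{ip1} to kill the $(\nabla p,\tcurl\tq)$ term, \eqref{ip2} (legitimate since $A^{-1}\btau\in H_D(\curll;\O)$ by definition of $\bSigma$) together with the Poincar\'e--Friedrichs inequality \eqref{PF} to absorb the flux term into $\|\curlt(A^{-1}\btau)\|_0$, and then conclude via the triangle-inequality estimate \eqref{tpq} built on the standard coerciveness \eqref{norm-equ-standard}, exactly as in the paper's proof (which performs the same two steps in the opposite order).
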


\begin{proof}
By \eqref{ip1}, \eqref{ip2}, the Poincar\'{e}-Friedrichs inequality \eqref{PF}, the Cauchy-Schwarz inequality, and the property of the coefficient matrix $A$, we have
\begin{eqnarray*}\notag
\|A^{-1/2}\tcurl \tq\|^2_0
&=& (A^{-1}\tcurl \tq +\nabla p + A^{-1}\btau, \tcurl \tq)-(A^{-1}\btau, \tcurl \tq)\\\notag
&=& (A^{-1}\tcurl \tq +\nabla p + A^{-1}\btau, \tcurl \tq)+ (\nabla\times ((A^{-1}\btau), \tq)\\\notag
&\leq&C(\| A^{-1/2}\tcurl \tq+A^{1/2}\nabla p + A^{-1/2}\btau\|_0 +\|\nabla\times (A^{-1}\btau)\|_0 )\|A^{-1/2}\tcurl \tq\|_0.
\end{eqnarray*}
Then, we get
$C\|\tcurl \tq\|_0
\leq \| A^{-1/2}\tcurl \tq+A^{1/2}\nabla p + A^{-1/2}\btau\|_0 +\|\nabla\times (A^{-1}\btau)\|_0$.
Thus,  $\|\tcurl \tq\|^2_0 \leq C M(\btau,p,\tq)$. By \eqref{tpq}, the lemma is then proved.
\end{proof}
The following theorem can be easily proved by using the same argument as Theorem \ref{divcurl_coer}.
\begin{thm}
 Assuming that one of the conditions of Theorem \ref{assp} is true,  Assumption \ref{cont} is true, and the conditions of Lemmas \ref{hm2D} and \ref{hm3D} are true, we have the following norm equivalence,
\beq\label{neq_sd}
C_1( \|\btau\|_\bSigma ^2  + \|\nabla_h v\|_0^2)  \leq \cG_{h}(\btau,v;0) \leq C_2( \|\btau\|_\bSigma^2  + \|\nabla_h v\|_0^2)  \quad \forall (\btau,v)\in \bSigma \times W_D(\cT).
\eeq
\end{thm}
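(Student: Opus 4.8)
The plan is to follow the proof of Theorem \ref{divcurl_coer} almost line for line, replacing the coerciveness Lemma for $L$ by the one just established for $M$, and replacing the role of the independent intensity by the curl of $A^{-1}\btau$. First I would dispose of the upper bound: each of the three summands of $\cG_h(\btau,v;0)$ is controlled by $\|\btau\|_\bSigma^2+\|\nabla_h v\|_0^2$ using the triangle inequality and the boundedness of $A$. The only nonroutine point is $\|\gradt\btau+X_h v\|_0^2$, where the zeroth-order piece $\|cv\|_0^2\le C\|v\|_0^2$ is absorbed by the discrete Poincar\'e--Friedrichs inequality \eqref{dPF} (valid since $v\in W_D(\cT)$), while $\|\nabla\times(A^{-1}\btau)\|_0^2$ is literally a summand of $\|\btau\|_\bSigma^2$.

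For the coerciveness I would start from the Helmholtz decomposition \eqref{hmmmm}, writing $A\nabla_h v=A\nabla p+\tcurl\tq$ with $p\in H^1_D(\O)$, $\tq\in\Q$, together with the orthogonality identity $\|A^{1/2}\nabla_h v\|_0^2=\|A^{1/2}\nabla p\|_0^2+\|A^{-1/2}\tcurl\tq\|_0^2$. This is exactly what moves the problem into the mesh-independent space $\T$, so that the resulting constant never sees the mesh. The two algebraic identities to record are $A^{-1/2}(\btau+A\nabla p+\tcurl\tq)=A^{-1/2}\btau+A^{1/2}\nabla_h v$ and
\[
Xp+\bb\cdot A^{-1}\tcurl\tq=\bb\cdot\nabla_h v+cp=X_h v+c(p-v),
\]
which show that every term of $M(\btau,p,\tq)$ either coincides with a term of $\cG_h(\btau,v;0)$ or differs from it only by the lower-order quantity $\|c(p-v)\|_0^2$. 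Invoking the preceding Lemma, $C\|(\btau,p,\tq)\|_\T^2\le M(\btau,p,\tq)$, and combining with these identities yields
\[
\|A^{1/2}\nabla_h v\|_0^2+\|\btau\|_\bSigma^2\le C\big(\cG_h(\btau,v;0)+\|p-v\|_0^2\big).
\]
The residual $\|p-v\|_0^2$ is treated as in Theorem \ref{divcurl_coer}: since $p-v\in W_D(\cT)$, the discrete Poincar\'e--Friedrichs inequality gives $\|p-v\|_0\le C\|\nabla_h(p-v)\|_0$, and the decomposition forces $\nabla_h(p-v)=-A^{-1}\tcurl\tq$, whence $\|p-v\|_0^2\le C\|\tcurl\tq\|_0^2$.

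The main obstacle, and the only step needing real care, is closing the loop, because the estimate above still carries $\|\tcurl\tq\|_0^2$ on the right. This is resolved by the independent bound extracted inside the $M$-Lemma proof, namely $C\|\tcurl\tq\|_0\le\|A^{-1/2}\tcurl\tq+A^{1/2}\nabla p+A^{-1/2}\btau\|_0+\|\nabla\times(A^{-1}\btau)\|_0$, which rests on the integration-by-parts identities \eqref{ip1}--\eqref{ip2} and the Poincar\'e--Friedrichs inequality \eqref{PF} on $\Q$. Using the first algebraic identity above, the right-hand side equals $\|A^{-1/2}\btau+A^{1/2}\nabla_h v\|_0+\|\nabla\times(A^{-1}\btau)\|_0$, both of which are controlled by $\cG_h(\btau,v;0)^{1/2}$; hence $\|\tcurl\tq\|_0^2\le C\,\cG_h(\btau,v;0)$. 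Substituting this back absorbs the residual term, and finally $\|\nabla_h v\|_0^2\le C\|A^{1/2}\nabla_h v\|_0^2$ from the lower ellipticity bound on $A$ delivers the claimed coerciveness $C(\|\btau\|_\bSigma^2+\|\nabla_h v\|_0^2)\le\cG_h(\btau,v;0)$. Throughout, Assumption \ref{cont} is what makes $\bSigma$ (hence $\cG_h$ and $\T$) well defined, and the argument otherwise needs only the minimal uniqueness hypothesis of Theorem \ref{assp} and the Helmholtz decomposition, with no restriction on the mesh size.
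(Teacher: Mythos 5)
Your proposal is correct and follows essentially the same route as the paper: the paper proves this theorem by simply invoking ``the same argument as Theorem \ref{divcurl_coer}'', i.e.\ the Helmholtz decomposition \eqref{hmmmm}, the coerciveness lemma for $M$ in place of that for $L$, the bound $\|p-v\|_0\le C\|\tcurl\tq\|_0$ via the discrete Poincar\'e--Friedrichs inequality, and the closure estimate $\|\tcurl\tq\|_0^2\le C\,\cG_h(\btau,v;0)$ extracted from the $M$-lemma, exactly as you spell out.
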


With  Assumption \ref{cont}, we can use standard $H^1$-conforming finite element to approximate $\bsigma$. For simplicity, we use the linear finite element space. Define
$$
\bSigma_h =\{ \btau \in C^0(\O)^d: \btau|_K \in P_1(K)^d \;\forall K\in\cT, \btau \in \bSigma\}.
$$
Then the potential–flux div-curl nonconforming LSFEM of \eqref{lspf1} or  \eqref{lspf2} is:
Find $(\bsigma_h,u_h)\in \bSigma_h\times V_D^{cr}$,
\beq \label{lspf2fem}
d_{h}((\bsigma_h,u_h), (\btau_h,v_h)) = (f, \gradt\btau+X_h v) \quad \forall (\btau_h,v_h) \in \bSigma_h\times V_D^{cr}.
\eeq
We then have the following a priori error estimate as the classic div-curl LSFEM problem (Theorem 3.1 of \cite{CMM:97}): 
\begin{thm}
Assume $u\in H^{1+\alpha}(\O)$ and $\bsigma\in H^{1+\alpha}(\O)^d$ and let $(\bsigma_h,u_h)\in \bSigma_h\times V_D^{cr}$ be the numerical solution of \eqref{lspf2fem}. Then
\begin{eqnarray*}
\|\bsigma-\bsigma_h\|_1  + \|\nabla_h (u-u_h)\|_0
&\leq &C h^{\alpha}(\|u\|_{1+\alpha}+ \|\bsigma\|_{1+\alpha}).
\end{eqnarray*}
\end{thm}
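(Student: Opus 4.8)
The plan is to run the standard least-squares quasi-optimality (C\'ea) argument, now that the coerciveness/norm-equivalence \eqref{neq_sd} is in hand. Since the discrete pair $(\bsigma_h,u_h)\in\bSigma_h\times V_D^{cr}\subset\bSigma\times W_D(\cT)$ is a conforming choice for the abstract least-squares problem \eqref{lspf1}--\eqref{lspf2}, the discrete equation \eqref{lspf2fem} inherits that structure. First I would record the Galerkin orthogonality (error equation): because the exact solution $(\bsigma,u)$ satisfies the first-order system \eqref{gen_sys2} in the $L^2$ sense, testing \eqref{lspf2} against any $(\btau_h,v_h)\in\bSigma_h\times V_D^{cr}$ produces the same right-hand side $(f,\gradt\btau_h+X_h v_h)$ as \eqref{lspf2fem}; subtracting gives
\[
d_{h}\big((\bsigma-\bsigma_h,\,u-u_h),\,(\btau_h,v_h)\big)=0\qquad\forall\,(\btau_h,v_h)\in\bSigma_h\times V_D^{cr},
\]
exactly in the spirit of \eqref{error_eq}.

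Next I would combine coercivity, continuity, and orthogonality. Writing $\|(\btau,v)\|_\star^2:=\|\btau\|_\bSigma^2+\|\nabla_h v\|_0^2$ and $e:=(\bsigma-\bsigma_h,\,u-u_h)$, let $(\btau_h,v_h)\in\bSigma_h\times V_D^{cr}$ be arbitrary. Using the lower bound in \eqref{neq_sd} (coercivity in $\|\cdot\|_\star$), the identity $\cG_h(\cdot\,;0)=d_h(\cdot,\cdot)$, the error equation to subtract $(\btau_h,v_h)$, and the continuity of $d_h$ (verified by Cauchy--Schwarz and the boundedness of $A$, just as for $c_h$), I obtain
\[
C_1\,\|e\|_\star^2\le d_h(e,e)=d_h\big(e,\,(\bsigma-\btau_h,\,u-v_h)\big)\le C\,\|e\|_\star\,\|(\bsigma-\btau_h,\,u-v_h)\|_\star.
\]
Dividing by $\|e\|_\star$ and taking the infimum over the discrete space yields the quasi-optimality bound
\[
\|\bsigma-\bsigma_h\|_\bSigma+\|\nabla_h(u-u_h)\|_0\le C\inf_{(\btau_h,v_h)\in\bSigma_h\times V_D^{cr}}\big(\|\bsigma-\btau_h\|_\bSigma+\|\nabla_h(u-v_h)\|_0\big).
\]

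Finally I would estimate the best-approximation error by inserting explicit interpolants. For the potential I take $v_h$ to be the Crouzeix--Raviart interpolant of $u$, so that summing the local estimate \eqref{localcr} over $\cT$ gives $\|\nabla_h(u-v_h)\|_0\le Ch^{\alpha}\|u\|_{1+\alpha}$ for $u\in H^{1+\alpha}(\O)$. For the flux I take $\btau_h$ to be the continuous piecewise-linear Lagrange interpolant of $\bsigma$; the decisive point is Assumption \ref{cont}, which asserts $\|\btau\|_\bSigma\approx\|\btau\|_1$ on $\bSigma$, so the $\bSigma$-norm error collapses to the ordinary $H^1$-error and standard Lagrange theory gives $\|\bsigma-\btau_h\|_\bSigma\le C\|\bsigma-\btau_h\|_1\le Ch^{\alpha}\|\bsigma\|_{1+\alpha}$ for $\bsigma\in H^{1+\alpha}(\O)^d$. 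Inserting these two estimates into the quasi-optimality bound proves the theorem.

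I expect the main obstacle to be verifying that these interpolants genuinely lie in the constrained discrete spaces: one must check that the Lagrange interpolant respects the boundary conditions built into $\bSigma_h$ (namely $\bsigma\cdot\bn=0$ on $\Gamma_N$ and $\gamma_t(A^{-1}\bsigma)=0$ on $\Gamma_D$) and that $v_h\in V_D^{cr}$. This is precisely where Assumption \ref{cont} carries the weight: the $C^{1,1}$-regularity of $A$ together with the ``nice domain'' hypothesis makes $\bSigma$ topologically identical to $H^1$, which both certifies that the curl term $\|\nabla\times(A^{-1}\btau)\|_0$ is controlled by the $H^1$-norm and removes any need for a separate, more delicate approximation estimate in the curl part of the $\bSigma$-norm. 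Absent this equivalence one would have to approximate directly in the graph norm of $\bSigma$, which is exactly the genuinely hard situation the paper flags as problematic when the domain or the coefficient is irregular.
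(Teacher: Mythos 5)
Your argument is correct and is essentially the proof the paper has in mind: the paper does not write the proof out but appeals to Theorem 3.1 of \cite{CMM:97}, and the intended route is exactly your combination of the error equation, the coercivity and continuity of $d_h$ furnished by the norm equivalence \eqref{neq_sd}, quasi-optimality, and standard interpolation in $V_D^{cr}$ and in $\bSigma_h$, with Assumption \ref{cont} collapsing the $\bSigma$-norm to the $H^1$-norm. One small correction: \eqref{localcr} is only the $L^2$ bound for the Crouzeix--Raviart interpolant, so for $\|\nabla_h(u-v_h)\|_0$ you should invoke the (equally standard) gradient estimate instead, e.g.\ via the fact that $\nabla_h I^{cr}u$ is the elementwise $L^2$-projection of $\nabla u$.
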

Define the following a posteriori error estimator:
\beq \label{xi}
\xi  ^2: = \cG_{h}(\bsigma_h,u_h;f),
\eeq
where $(\bsigma_h,u_h)\in \bSigma_h\times V_D^{cr}$ is the numerical solution of \eqref{lspf2fem}. Due to the norm equivalence \eqref{neq_sd}, we immediately have the global reliability and efficiency of $\xi$:
\beq
C_1 \xi\leq  \|\bsigma-\bsigma_h\|_1  + \|\nabla_h (u-u_h)\|_0 \leq 
	C_2 \xi.
\eeq
\subsection{Restrictions of the two-field potential–flux div-curl least-squares formulation}
We discuss the reactions of the two-field potential–flux div-curl least-squares formulation due to the violation of Assumption \ref{cont}.

The first restriction is about the domain even when $A$ is $C^{1,1}$, for example, $A=I$. We have the following famous negative result \cite{Cos:91} that when $d=3$, $\{\btau\in H(\divvr;\O):\btau\cdot\bn=0 \mbox{ on }\p\O \}\cap H^1(\O)^3$ and $\{\btau\in H(\curll;\O):\btau\times\bn=0 \mbox{ on }\p\O \}\cap H^1(\O)^3$ are closed, infinite-codimensional subspaces of $\{\btau\in H(\divvr;\O):\btau\cdot\bn=0 \mbox{ on }\p\O \}\cap H(\curll;\O)$ and $\{\btau\in H(\curll;\O):\btau\times\bn=0 \mbox{ on }\p\O \}\cap H(\divvr;\O)$, respectively. For such cases, $H^1$-conforming finite elements can not be used to approximate the flux, see detailed discussion in Section B.2.2 of \cite{BG:09}.

The second restriction is about the possible discontinuity of coefficient $A$. As discussed in \cite{CZ:09,CZ:10a,CYZ:11,CHZ:17zz}, for the exact solution, we have
\beq
\bsigma = -A\nabla u \in H(\divvr;\O) \quad\mbox{and}\quad \nabla u \in H(\curll;\O).
\eeq
But, for a discontinuous $A$, we also have
\beq
\bsigma = -A\nabla u\not\in H^1(\O)^d.
\eeq
Thus for a discontinuous $A$, $\bSigma_h$ or any $H^1$-conforming finite element space is a wrong approximation space for $\bsigma$. The error estimator \eqref{xi} will never be zero even the numerical solution $u_h=u$ is exact; see examples given at \cite{CHZ:17zz}. 

In conclusion, the two-field potential–flux div-curl least-squares formulation should be used very carefully for both the conforming and nonconforming cases. 

\section{Concluding Remarks}
We present two least-squares formulations for the general second-order elliptic equations with nonconforming finite element approximation. One is the two-field potential-flux div formulation, and the other is the three-field potential-flux-intensity div-curl formulation. For the two-field div formulation, the CR-LSFEM does not have the norm equivalence in the abstract setting and thus does not have automatic discrete coerciveness and built-in a posteriori error estimates. The three-field formulation recovers the norm equivalence in the abstract setting and the good properties of the least-squares method. Furthermore, examining the proof carefully, it is easy to find that the three-field potential-flux-intensity div-curl formulation has the potential to be generalized to other non-traditional nonconforming least-squares approximation, for example, the multiscale finite elements \cite{HWC:99,HZZ:14} and the immersed finite elements \cite{Li:98}. In this paper, we do not discuss the formulations with jump-stabilizations. In a future paper, we plan to discuss the least-squares methods with jump-stabilizations for nonconforming and discontinuous finite element approximations for the possible indefinite second-order linear elliptic equations.

\bibliographystyle{plain}
\bibliography{../bib/szhang}
\end{document}